\newtheorem{thm}{Theorem}[section]
\newtheorem{prop}[thm]{Proposition}
\newtheorem{lemma}[thm]{Lemma}
\newtheorem{cor}[thm]{Corollary}
\theoremstyle{definition}
\newtheorem{defn}[thm]{Definition}
\newtheorem{question}[thm]{Question}
\theoremstyle{remark}
\newtheorem{ex}[thm]{Example}
\newtheorem{rem}[thm]{Remark}
\newtheorem{exercise}[thm]{Exercise}
\newcommand{\C}{\mathrm{C}}
\newcommand{\St}{\mathrm{St}}
\newcommand{\No}{{\rm N}}
\newcommand{\GG}{\Gamma\mathfrak{G}}
\DeclareMathOperator{\link}{link}
\DeclareMathOperator{\rk}{rk}
\DeclareMathOperator{\cd}{cd}
\DeclareMathOperator{\vb}{vb_1}
\DeclareMathOperator{\bone}{b_1}
\newcommand{\cX}{\mathcal{X}}
\newcommand{\cT}{\mathcal{T}}
\newcommand{\N }{\mathbb{N}}
\newcommand{\Z }{\mathbb{Z}}
\newcommand{\Q }{\mathbb{Q}}
\newcommand{\vr}{ \leqslant_{vr}}
\newcommand{\leqs}{\leqslant}
\newcommand{\VR}{(LR)}
\newcommand{\VRC}{(VRC)}
\begin{document}

\title{Virtual retraction properties in groups}

\author{Ashot Minasyan}
\address{School of Mathematical Sciences,
University of Southampton, Highfield, Southampton, SO17 1BJ, United
Kingdom.}
\email{aminasyan@gmail.com}
%
%\date{}

\begin{abstract}
If $G$ is a group, a virtual retract  of $G$ is a subgroup which is a retract of a finite index subgroup. Most of the paper focuses
on two group properties: property \VR{}, that all finitely generated subgroups are virtual retracts, and property \VRC{}, that all cyclic subgroups are virtual retracts.
We study the permanence of these properties under commensurability, amalgams over retracts, graph products and wreath products.
In particular, we show that \VRC{} is stable under passing to finite index overgroups, while \VR{} is not.

The question whether all finitely generated virtually free groups satisfy \VR{} motivates the remaining part of the paper, studying virtual
free factors of such groups. We give a simple criterion characterizing when a finitely generated subgroup of a virtually free group is a free factor
of a finite index subgroup. We apply this criterion to settle a conjecture of Brunner and Burns.
\end{abstract}

\keywords{Virtual retractions, virtual free factors, property \VR{}, property \VRC{}, M. Hall's property}
\subjclass[2010]{20E26, 20E25, 20E08}

\maketitle
%\vspace{-0.5cm}
%\tableofcontents
%%%%%%%%%%%%%%%%%%%%%%%%%%%%%%%%%%%%%%%%%%%%%%%%%%%%%%%%%%%%%%%%%%%%%%%%%%%%%%%%%%%%%%%%%%%%%%
%%%%%%%%%%%%%%%%%%%%%%%%%%%%%%%%%%%%%%%%%%%%%%%%%%%%%%%%%%%%%%%%%%%%%%%%%%%%%%%%%%%%%%%%%%%%%%

\section{Introduction}
Recall that a subgroup $H$ of a group $K$ is called a \emph{retract} if there is a homomorphism $\rho:K \to H$ which restricts to the identity map on $H$.  This is equivalent to $K$ splitting as a semidirect product $N \rtimes H$, where $N=\ker\rho$. In this case the map
$\rho$ is called a \emph{retraction} of $K$ onto $H$.

\begin{defn}\label{def:vr} Let $G$ be a group and let $H$ be a subgroup of $G$. We will say that $H$ is a \emph{virtual retract of $G$},
denoted $H \vr G$, if there exists a subgroup
$K \leqslant G$ such that $|G:K|<\infty$, $H \subseteq K$ and $H$ is a retract of $K$.
\end{defn}

Note that, according to this definition, any finite index subgroup of $G$ is a virtual retract.

\begin{defn}\label{def:VR_and_VRC} Let $G$ be a group. We will say that $G$ has \emph{property \VRC{}} if every cyclic subgroup of $G$ is a virtual retract. If
all finitely generated subgroups of $G$ are virtual retracts then we will say that $G$ has \emph{property \VR{}}.
\end{defn}

Property \VR{} is much stronger than \VRC{}; for example, the direct product of two non-abelian free groups has \VRC{} but does not have \VR{}
(cf. Lemma~\ref{lem:vrab-basics}.(b) and Remark~\ref{rem:VR_dir_prod} below).
Explicitly, both of these properties were first introduced by Long and Reid in \cite{L-R}, however, implicitly they were investigated much earlier.
One of the purposes of this article is to emphasize that properties \VR{}, \VRC{}, and virtual retracts in general, apart from having numerous applications
are also very interesting by themselves.

Virtual retractions are extremely useful for studying the profinite topology on groups. It is well-known that a virtual retract of a residually finite group is closed in the
profinite topology (see Lemma~\ref{lem:virt_retr->closed}), hence property \VRC{} implies that the group is cyclic subgroup separable, and property \VR{}
yields that the group is LERF. In particular, Scott \cite{Scott} proved that all surface groups are LERF essentially by showing that they satisfy property \VR{}.
This was greatly generalized by Wilton \cite{Wilton-limit}, who showed that all limit groups (a.k.a. finitely generated fully residually free groups) satisfy \VR{}.

Knowing that a subgroup is a virtual retract gives a lot of insight into its embedding in the ambient group.
For example, if $G$ is a finitely generated group and $H \vr G$ then $H$ is  finitely generated and undistorted (quasi-isometrically embedded) in $G$.
Many other finiteness properties of $G$, including finite presentability, would also be inherited by $H$.
And conversely, in some classes of groups all ``nicely embedded'' subgroups are virtual retracts:  Davis \cite{Davis} proved this for undistorted
subgroups of finitely generated free nilpotent groups, Long and Reid \cite{L-R} showed this for geometrically finite subgroups of  right angled hyperbolic reflection groups,
and Haglund \cite{Haglund} established this for strongly quasiconvex subgroups of right angled Coxeter and Artin groups.

Another property of profinite topology, where virtual retractions  play an important role, is conjugacy separability. A group is said to be \emph{conjugacy separable}
if given two non-conjugate elements, there is a homomorphism to a finite group such that the images of these elements are still non-conjugate.
Conjugacy separability is much more sensitive than residual finiteness. In particular, it is not stable under commensurability: a conjugacy separable group could have an index $2$ subgroup or overgroup which is not conjugacy separable (see \cite{Min-subdir} and references therein). However, a retract of a conjugacy separable group is also conjugacy separable. By an earlier work of the author \cite{M-RAAG},
all finite index subgroups of right angled Artin groups are conjugacy separable. On the other hand, in the seminal paper \cite{H-W} Haglund and Wise initiated a program
aiming to prove that many groups arising in Geometric Group Theory have finite index subgroups that are virtual retracts of right angled Artin groups
(see, for example,  \cite{Wise-QH} and \cite{Agol} for successful implementations of this program).
Thus any such ``virtually compact special group'' $A$ (using the terminology of \cite{H-W}) contains a conjugacy separable subgroup $H$ of finite index. Unfortunately
this does not guarantee that $A$ is itself conjugacy separable. An attempt to prove conjugacy
separability of $A$ naturally prompts the following question.

\begin{question}\label{q:quest-1} Suppose that $G$ is a group and $H \vr G$. Find conditions on $G$ and $H$ ensuring that for every subgroup
$A \leqslant G$, which contains $H$ as a finite index subgroup, one has $A \vr G$.
\end{question}

This question turns out to be quite interesting. If a group $X$ is not virtually abelian then for $G=X \wr \Z_2 \cong (X \times X)\rtimes \langle \alpha\rangle_2$,
the diagonal subgroup $H \leqs X \times X$ is a virtual retract, but its index $2$ overgroup $A=\langle H,\alpha \rangle \cong X \times \Z_2$ is not: see Example~\ref{ex:wreath} and Proposition~\ref{prop:wreath_ex}. In this case $H \cong X$ is not virtually abelian.
In Section \ref{sec:retr_on_vab} we establish a strong converse to this example.

\begin{thm}\label{thm:virt_ab-lift} Let $G$ be a residually finite group, let $H \vr G$ be finitely generated and virtually abelian, and
let $A \leqslant G$ be any subgroup containing $H$ with finite index. Then $A \vr G$.
\end{thm}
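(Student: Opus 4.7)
The plan is, after suitable reductions, to average the retraction $\rho : K \to H$ over the finite quotient $A/H$ and then use this to build a retraction of a finite-index subgroup of $G$ onto $A$.

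\emph{Step 1 (reductions).} By transitivity of $\vr$ (any finite-index subgroup of a virtual retract is again a virtual retract), I replace $H$ by its normal core in $A$ and then by a characteristic finite-index free abelian subgroup, which exists because $H$ is finitely generated and virtually abelian. Hence WLOG $H\cong\Z^r$ and $H\lhd A$. Since $G$ is residually finite and $H\vr G$, the subgroup $H$ is closed in the profinite topology of $G$ by Lemma~\ref{lem:virt_retr->closed}; as $A\setminus H$ is a finite union of cosets of $H$, one finds a finite-index $M\leqs G$ with $A\cap M = H$. Starting from a retraction $\rho:K\to H$ with $H\leqs K\leqs G$ of finite index, I replace $K$ by $K\cap M$ so that $A\cap K = H$, and then by $\bigcap_{a\in A} aKa^{-1}$ so that $A$ normalizes $K$. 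This last intersection is finite because the $A$-orbit of $K$ under conjugation is finite: $A\cap N_G(K)$ has finite index in $A$, since $N_G(K)\supseteq K$ has finite index in $G$. Restricting $\rho$ yields a retraction $\rho : K \to H$ with $A\cap K = H$ and $A$ normalizing $K$.

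\emph{Step 2 (averaging).} Let $n = [A:H]$, fix a transversal $T\subset A$ of $H$ in $A$, set $V := H\otimes_{\Z}\Q$, and let $\alpha:A\to GL(V)$ extend the conjugation action of $A$ on $H$. Define
\[
\bar\rho : K \to V, \qquad \bar\rho(k) := \frac{1}{n}\sum_{t\in T}\alpha(t)\,\rho(t^{-1}kt).
\]
A routine check shows that $\bar\rho$ is a group homomorphism, $\bar\rho|_H = \mathrm{id}_H$ (each summand equals $h$, using $H\lhd A$ and $\rho|_H=\mathrm{id}$), and $\bar\rho$ is $A$-equivariant: $\bar\rho(a^{-1}ka)=\alpha(a^{-1})\bar\rho(k)$. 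The equivariance reduces to the observation that $\alpha(s)\rho(s^{-1}ks)$ depends only on the coset $sH$, a consequence of $H$ being abelian (so that conjugation on $H$ by any $h\in H$ is trivial). Since each summand lies in $H$, the image of $\bar\rho$ is contained in $\tfrac{1}{n}H$, and the quotient $\tfrac{1}{n}H/H\cong(\Z/n\Z)^r$ is finite.

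\emph{Step 3 (extension to $A$).} Let $K_0 := \bar\rho^{-1}(H)$, a finite-index subgroup of $K$ containing $H$; it is $A$-invariant by the equivariance of $\bar\rho$, and $\bar\rho|_{K_0}:K_0\to H$ is an $A$-equivariant retraction. As $A$ normalizes $K_0$, the subset $AK_0$ is a subgroup of $G$ of finite index containing $A$. Define $r:AK_0\to A$ by $r(ak):= a\,\bar\rho(k)$ for $a\in A,\,k\in K_0$. Well-definedness uses $A\cap K_0=H$ together with $\bar\rho|_H=\mathrm{id}$; the homomorphism property uses the $A$-equivariance of $\bar\rho$; and $r|_A=\mathrm{id}_A$ is immediate. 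Hence $A\vr G$.

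The main obstacle is that the natural averaging produces a map valued in $H\otimes\Q$ rather than in $H$; passing to the preimage $K_0$, which is finite-index precisely because $H$ is finitely generated, resolves this. The preparatory arrangement $A\cap K = H$, on the other hand, is where the residual finiteness of $G$ is used crucially, via Lemma~\ref{lem:virt_retr->closed}.
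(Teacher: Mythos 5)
Your proof is correct, but it follows a genuinely different route from the paper's. The paper factors the problem through a virtually abelian quotient: Lemma~\ref{lem:retr_onto_virt_ab->homom} uses the kernel $N$ of the retraction, intersects it with a finite-index normal subgroup of $G$ and then with finitely many conjugates, to produce an epimorphism $\varphi\colon G\to P$ onto a finitely generated virtually abelian group that is injective on $A$; Corollary~\ref{cor:retr_in_virt_ab_gps} (every subgroup of such a $P$ is a virtual retract, proved via Maschke's theorem in Lemma~\ref{lem:virt_ab_normal_retr}) then gives $\varphi(A)\vr P$, and Lemma~\ref{lem:basic_props_vr}.(ii) pulls this back to $A\vr G$. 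You instead stay inside $G$: after reducing to $H\cong\Z^r$ normal in $A$ and arranging $A\cap K=H$ with $A$ normalizing $K$ (your finite-orbit argument for $\bigcap_{a\in A}aKa^{-1}$ is fine, and $H\lhd A$ guarantees the intersection still contains $H$), you average the retraction over $A/H$ to get a transfer-type homomorphism into $H\otimes\Q$, recover integrality by passing to $K_0=\bar\rho^{-1}(H)$, and extend to an explicit retraction $AK_0\to A$; the claimed routine verifications (homomorphism property via abelian target, coset-independence of $s\mapsto\alpha(s)\rho(s^{-1}ks)$, well-definedness of $r$ via $A\cap K_0=H$) all check out. Both arguments use residual finiteness in exactly the same place, namely to arrange $A\cap K=H$ via Lemmas~\ref{lem:virt_retr->closed} and~\ref{lem:closed->sep}, and both ultimately rest on a rational averaging made integral by a finite-index passage (Maschke's theorem in the paper, your explicit transfer). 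Your version is more direct and self-contained, producing a concrete retraction onto $A$ without constructing $P$; the paper's detour buys reusable intermediate statements, since Lemma~\ref{lem:retr_onto_virt_ab->homom} and Corollary~\ref{cor:retr_in_virt_ab_gps} are invoked again later (e.g.\ in Lemma~\ref{lem:crit_for_vr_in_dir_prod}, Proposition~\ref{prop:vpolyc+vrc->vab} and Lemma~\ref{lem:VRC->quasi-pot}).
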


This theorem implies that property \VRC{} is stable under commensurability, making it much more amenable to study.
Another application is the following proposition, proved in
Section~\ref{sec:VR_and_VRC}.

\begin{prop} \label{prop:VRC->VRAb} If $G$ is a group with property \VRC{} then every finitely generated virtually abelian subgroup is a virtual retract of $G$.
\end{prop}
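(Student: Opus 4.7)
The plan is to reduce the statement to the free abelian case and then appeal to Theorem~\ref{thm:virt_ab-lift}. Note first that $G$ is residually finite, since \VRC{} implies cyclic subgroup separability (as observed in the introduction) and in particular separability of the trivial subgroup. Let $A\leqslant G$ be finitely generated virtually abelian, and pick a free abelian subgroup $H\leqslant A$ of finite index, say $H\cong\Z^{n}$. It will suffice to produce a subgroup $H'$ of finite index in $H$ with $H'\vr G$: then $H'$ is finitely generated virtually abelian and contained in $A$ with finite index, so Theorem~\ref{thm:virt_ab-lift} (with the roles of $H$ there played by our $H'$) yields $A\vr G$ directly.

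To construct $H'$, consider the collection $\mathcal{V}$ of homomorphisms $\phi\colon K\to\Z$, where $K$ ranges over finite-index subgroups of $G$. Every such $\phi$ restricts to a homomorphism $H\cap K\to\Z$, and since $H\cap K$ has finite index in $H\cong\Z^{n}$, this restriction extends uniquely to an element $\tilde\phi\in\mathrm{Hom}(H,\Q)\cong H^{*}\otimes\Q$. The key claim is that the $\Q$-span of $\{\tilde\phi:\phi\in\mathcal{V}\}$ is all of $H^{*}\otimes\Q$. If not, its annihilator in $H\otimes\Q$ would contain a nonzero element, and clearing denominators would yield a nonzero $h\in H$ with $\tilde\phi(h)=0$ for every $\phi\in\mathcal{V}$. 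But \VRC{} applied to $\langle h\rangle$ supplies a retraction, and hence a $\phi\in\mathcal{V}$ defined on some finite-index $K\ni h$ with $\phi(h)=1$; since $h\in H\cap K$, the extension $\tilde\phi$ satisfies $\tilde\phi(h)=1$, a contradiction.

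Pick $\phi_{1},\dots,\phi_{n}\in\mathcal{V}$ whose extensions $\tilde\phi_{i}$ form a $\Q$-basis of $H^{*}\otimes\Q$ and, after intersecting their domains, assume they are all defined on a single finite-index subgroup $K\leqslant G$. Bundle them into $\Phi:=(\phi_{1},\dots,\phi_{n})\colon K\to\Z^{n}$. By $\Q$-linear independence, the restriction $\Phi|_{H\cap K}$ is injective with image a finite-index subgroup $L\leqslant\Z^{n}$; let $\Psi\colon L\to H\cap K$ be its inverse and set $K_{0}:=\Phi^{-1}(L)$, a finite-index subgroup of $G$ containing $H\cap K$. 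Then $\Psi\circ\Phi\colon K_{0}\to H\cap K$ restricts to the identity on $H\cap K$, so $H':=H\cap K$ is a virtual retract of $G$, and Theorem~\ref{thm:virt_ab-lift} finishes the proof. The main obstacle is really the duality step in the middle paragraph, which bypasses the naive difficulty that the cyclic retractions supplied by \VRC{} need not be jointly compatible as homomorphisms on $H$.
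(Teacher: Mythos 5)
Your argument is correct, but it follows a genuinely different route from the paper. The paper first reduces to free abelian subgroups via Theorem~\ref{thm:virt_ab-lift} and then runs an induction on the rank $n$: given $A\cong\Z^n$, it takes a corank-one subgroup $B$, uses the inductive retraction onto $B$ together with a \VRC{}-supplied homomorphism $\xi\colon L\to C\cong\Z$ killing nothing of a generator $a$ of $A\cap N$, and shows the bundled map $\psi=(\xi,\rho)\colon L\to C\times B$ is injective with finite-index image on $B_1\langle a\rangle$, before invoking Theorem~\ref{thm:virt_ab-lift} once more. You instead avoid induction entirely: you consider all homomorphisms from finite-index subgroups of $G$ to $\Z$, extend their restrictions uniquely to elements of $\operatorname{Hom}(H,\Q)$, and use \VRC{} (applied to a single $\langle h\rangle$ that would otherwise be annihilated) to show these rational extensions span the dual; picking $n$ of them with linearly independent extensions and composing the bundled map $\Phi$ with the inverse of $\Phi|_{H\cap K}$ on its finite-index image gives the retraction in one shot. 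Your duality step is the analogue of the paper's inductive combination of $\rho$ and $\xi$, so the underlying mechanism is the same, but your version trades the rank induction for a single linear-algebra argument over $\Q$, which is arguably cleaner, while the paper's version stays entirely integral and closer to its other arguments; both conclude with exactly one application (in your case) or two applications (in the paper's) of Theorem~\ref{thm:virt_ab-lift}. One small citation point: residual finiteness of $G$ should be quoted from Lemma~\ref{lem:vrc->rf}, which proves it directly from \VRC{}; the route you sketch (\VRC{} $\Rightarrow$ cyclic subgroup separability $\Rightarrow$ separability of $\{1\}$) is mildly circular, since the separability of virtual retracts in Lemma~\ref{lem:virt_retr->closed} already presupposes residual finiteness. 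This does not affect the substance of your proof.
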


The claim of Proposition \ref{prop:VRC->VRAb} is somewhat surprising, as it shows that property \VRC{} is actually stronger than one would originally think.
In Section \ref{sec:amalg_over_retr} we show that \VRC{} is preserved by amalgamated products over retracts and by HNN-extensions/amalgamated products
over finite subgroups (see Theorem~\ref{thm:vrc-stab_apr}
and Corollary~\ref{cor:VRC_stable_under_amalg_over_finite}). This easily implies that graph products of groups with \VRC{} also satisfy \VRC{}
(Theorem~\ref{thm:gp_of_vrc}).

\begin{cor}\label{cor:virt_spec->vrc} Any group $G$ possessing a finite index subgroup that embeds in a right angled Artin or Coxeter group has property \VRC{}.
\end{cor}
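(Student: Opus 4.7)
The plan is to assemble four simple ingredients, all either stated or implicitly proved in the excerpt: (i) $\Z$ and $\Z_2$ have property \VRC{}; (ii) Theorem~\ref{thm:gp_of_vrc} applied to graph products of such groups; (iii) the observation that \VRC{} passes to arbitrary subgroups; and (iv) Theorem~\ref{thm:virt_ab-lift}, which lets us pass \VRC{} up to a finite-index overgroup.

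First I would check the base cases. The group $\Z$ has \VRC{} because every non-trivial cyclic subgroup of $\Z$ has finite index in $\Z$, and Definition~\ref{def:vr} declares any finite-index subgroup to be a virtual retract. The group $\Z_2$ has \VRC{} trivially, being finite. Since a right-angled Artin group is by definition a graph product of copies of $\Z$, and a right-angled Coxeter group is a graph product of copies of $\Z_2$, Theorem~\ref{thm:gp_of_vrc} yields that every RAAG and every RACG has property \VRC{}.

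Next I would verify that \VRC{} is inherited by subgroups. If $G$ has \VRC{} and $H \leqs G$, then for any cyclic $C \leqs H$ there exists a finite-index $K \leqs G$ containing $C$ with a retraction $\rho \colon K \to C$. Then $K \cap H$ has finite index in $H$, contains $C$, and $\rho$ restricts to a retraction $K \cap H \to C$, so $C \vr H$. Hence $H$ has \VRC{}. In our setting, if $H \leqs G$ has finite index and embeds in a RAAG or RACG $\Gamma$, we conclude that $H$ inherits \VRC{} from $\Gamma$.

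The final step is to lift \VRC{} from $H$ to the finite-index overgroup $G$. Since $H$ is residually finite (being a subgroup of the residually finite group $\Gamma$) and of finite index in $G$, the group $G$ is itself residually finite, so Theorem~\ref{thm:virt_ab-lift} is applicable. Given a cyclic subgroup $C \leqs G$, the intersection $C \cap H$ is cyclic and of finite index in $C$; by \VRC{} of $H$ there is a finite-index subgroup $K \leqs H$ with $C \cap H \subseteq K$ and a retraction $K \to C \cap H$. Because $K$ has finite index in $G$, this shows $C \cap H \vr G$. Now $C \cap H$ is finitely generated and (cyclic, hence) virtually abelian, and $C$ is a finite-index overgroup of $C \cap H$ inside $G$, so Theorem~\ref{thm:virt_ab-lift} gives $C \vr G$. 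Thus $G$ has \VRC{}. There is no serious obstacle here beyond correctly packaging the stability properties already established; the only point requiring minor care is ensuring the residual finiteness hypothesis of Theorem~\ref{thm:virt_ab-lift} holds, which follows from the fact that RAAGs and RACGs are residually finite.
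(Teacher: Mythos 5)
Your proof is correct and follows essentially the same route as the paper, which deduces the corollary directly from Theorem~\ref{thm:gp_of_vrc} (RAAGs and RACGs are graph products of $\Z$'s or $\Z_2$'s, hence have \VRC{}), Lemma~\ref{lem:vr-obs}.(i) (subgroups inherit \VRC{}) and Lemma~\ref{lem:vrab-basics}.(a) (\VRC{} passes to finite-index overgroups). The only difference is that you re-prove those two lemmas inline -- your final step, intersecting a cyclic subgroup with $H$ and invoking Theorem~\ref{thm:virt_ab-lift}, is exactly the paper's proof of Lemma~\ref{lem:vrab-basics}.(a) -- so the content is the same.
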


The above corollary covers all ``virtually special''  groups of Haglund and Wise \cite{H-W}. Combined with Proposition~\ref{prop:VRC->VRAb} it implies that
any virtually abelian subgroup of such a group is a virtual retract. The fact that right angled Artin/Coxeter groups virtually have \VRC{}, has already been
observed by Aschenbrenner,  Friedl and Wilton \cite[Chapter 6, (G.18)]{A-F-W}, based on an earlier result of Agol \cite{Agol-RFRS}.

In Section \ref{sec:quasipot} we discuss an application of \VRC{} to quasi-potency, which is a condition originally introduced by Evans \cite{Evans}
and later rediscovered by Tang \cite{Tang} and Burillo-Martino \cite{Bur-Mar}.
This condition provides good control over the profinite topology on an amalgamated product of two groups over a (virtually) cyclic subgroup.
We use it to show that the amalgamated product of two virtually special groups
over a virtually cyclic subgroup is cyclic subgroup separable (Corollary~\ref{cor:virt_spec_over_virt_cyclic}).
As far as the author is aware, even the residual finiteness of such amalgamated products was previously unknown in general (of course, in some cases
such groups are themselves virtually special, so much more is true: see \cite{H-W-amalg,Wise-QH}).

Section \ref{sect:solv} is devoted to solvable groups and wreath products. We observe that a virtually polycyclic group has \VRC{} if and only if it is virtually abelian (Proposition~\ref{prop:vpolyc+vrc->vab}). On the other hand, in Theorem~\ref{thm:wreath->VRAb} we prove that the restricted wreath product of two groups with \VRC{}
has \VRC{} if and only if either the base group is abelian or the acting group is finite. Theorem~\ref{thm:vr_for_wreath_with_Z} shows that for a finitely generated abelian group
$A$, the wreath product $A \wr \Z$  satisfies property \VR{} if and only if $A$ is semisimple. This leads to the following example.

\begin{prop}\label{prop:f_i_overgp-not_VR} The group $G={\Z_2}^2 \, \wr\, \Z$ satisfies property \VR{}, but has an index $2$ overgroup $\tilde G$ which does not satisfy \VR.
\end{prop}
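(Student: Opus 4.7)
The plan is to take $\tilde G:=\Z_2 \wr (\Z \times \Z_2)$, which contains $G=\Z_2^2 \wr \Z$ as an index-$2$ subgroup, namely the preimage of the $\Z$-factor of the acting group $\Z\times\Z_2$. Property \VR{} for $G$ is immediate from Theorem~\ref{thm:vr_for_wreath_with_Z}, since the finitely generated abelian group $\Z_2^2$ is semisimple. Writing the base of $\tilde G$ as $B=\bigoplus_{n\in\Z}(\F_2 a_n \oplus \F_2 b_n)$, with $t$ and $s$ generating the $\Z$- and $\Z_2$-factors of the acting group respectively, $t$ shifts the index $n$ and $s$ swaps $a_n\leftrightarrow b_n$.

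To see that $\tilde G$ fails \VR{}, I exhibit a finitely generated non-virtual-retract, mimicking Example~\ref{ex:wreath}. Set $\xi_n\coloneq a_n b_n$; these are $s$-fixed, and together with $t$ they generate a ``diagonal'' copy $H\le G$ isomorphic to $\Z_2\wr\Z$. Since $s$ commutes with $H$, the subgroup
\[
A:=\langle H, s\rangle \cong (\Z_2\wr\Z)\times\Z_2
\]
is finitely generated, and the target is to show $A\not\vr \tilde G$.

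Suppose for contradiction that $A\subseteq K\leqs \tilde G$ with $[\tilde G:K]<\infty$ and a retraction $\pi\colon K\to A$. Replacing $K$ by $K\cap sKs^{-1}$ (which still contains $A$ since $sAs^{-1}=A$), I may assume that $K$ is $s$-invariant. The key observation is that the $s$-fixed subgroup $B^s=\bigoplus_n \F_2 \xi_n$ coincides with $H\cap B$ and has infinite index in $B$, so the finite-index subgroup $K\cap B$ must contain an element $y$ with $sys^{-1}\neq y$. Then $\pi(y)^2=\pi(y^2)=1$, and since $s$ is central in $A=H\times\langle s\rangle$ and $\pi(s)=s$,
\[
\pi(sys^{-1})=s\,\pi(y)\,s^{-1}=\pi(y).
\]
Hence $\pi(y\cdot sys^{-1})=\pi(y)^2=1$, but $y\cdot sys^{-1}$ lies in $B^s=H\cap B\subseteq A$ and is non-trivial, so $\pi$ should fix it: a contradiction.

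The main obstacle is the initial $s$-invariant reduction of $K$; once that is secured, the argument reduces to the infinite codimension of $B^s$ in $B$ together with the mismatch between the centrality of $s$ in the putative retract $A$ and its non-trivial action on the base of $\tilde G$.
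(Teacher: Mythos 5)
Your proof is correct and takes essentially the same approach as the paper: your $\tilde G=\Z_2\wr(\Z\times\Z_2)$ is (after a linear change of basis of ${\Z_2}^2$) exactly the paper's $G\rtimes\langle\alpha\rangle$, your $A=\C_{\tilde G}(s)\cong(\Z_2\wr\Z)\times\Z_2$ is the paper's subgroup $H$, and your element $y\cdot sys^{-1}=[s,y]$ plays precisely the role of the commutator $[\alpha,g]$ in the paper's contradiction. Note only that the ``$s$-invariant reduction'' you flag as the main obstacle is vacuous, since $s\in A\subseteq K$ already forces $sKs^{-1}=K$.
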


Thus \VR{}, unlike \VRC{}, is not invariant under commensurability. This property is also not stable under direct products or amalgamated free products over retracts
(see Remark~\ref{rem:VR_dir_prod}.(b) and Example~\ref{ex:VR_not_closed_under_APRs}). In \cite{G-M-S} Gitik, Margolis and Steinberg proved that \VR{} is preserved
by free products, but extending this to amalgamated free products/HNN-extensions of groups with \VR{} over finite subgroups does not seem to be easy.
In fact, we do not even know whether all finitely
generated virtually free groups satisfy \VR{} (see Question~\ref{q:v_free->VR}). Motivated by this question, in Section~\ref{sec:vf} we investigate when a
finitely generated
subgroup of a virtually free group is a ``virtual free factor''.

A group is said to have \emph{M. Hall's property} if every finitely generated subgroup is a free factor of a subgroup of finite index.
Evidently this is much stronger than \VR{}; the name comes from the
fact that this property was originally proved to hold in free groups by M. Hall \cite{M-Hall} (see also \cite{Burns1} for an explicit statement).

In \cite{Brun-Burns} Brunner and Burns studied groups satisfying M. Hall's property. Their results, combined with Dunwoody's Accessibility Theorem \cite{Dunw},
imply that a finitely presented group with this property must be virtually free.
In \cite{Bogop-1,Bogop-2,Bog-habil} Bogopol'ski\v{\i} used the theory of covering spaces to study this property for fundamental groups of finite graphs of finite groups.
In particular, in \cite{Bog-habil} he showed that a finitely generated group $G$ satisfies M. Hall's property if and only if it is virtually free and every finite subgroup of $G$ is a free factor of a finite index subgroup.

Bogopol'ski\v{\i}'s work was motivated by a conjecture of Brunner and Burns \cite{Brun-Burns}, stating that a finitely generated
virtually free group $G$ has M. Hall's property if and only if $|\No_G(F):F|<\infty$ for every non-trivial finitely generated
(equivalently, finite) subgroup $F \leqs G$. The next statement is a simplification of Theorem \ref{thm:Hall_for_vf} from Section \ref{sec:vf}.

\begin{thm}\label{thm:Hall-crit-simple} Let $F$ be a finitely generated subgroup of a finitely generated virtually free group $G$. Then $F$ is a free factor of a finite index subgroup of $G$ if
and only if  $|\C_G(f):\C_F(f)|<\infty$ for every non-trivial finite order element $f \in F$.
\end{thm}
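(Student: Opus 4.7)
For necessity, suppose $K \leqs G$ is a finite-index subgroup with a splitting $K = F * L$, and let $f \in F$ be a non-trivial finite-order element. In the Bass--Serre tree associated with $K = F * L$ all edge stabilizers are trivial, so $f$ fixes no edge; however, $f$ fixes the vertex with stabilizer $F$. As $\Fix(f)$ is a subtree containing no edges, it is just this single vertex. Hence $\C_K(f) \subseteq F$, so $\C_K(f) = \C_F(f)$. Combined with $\C_K(f) = K \cap \C_G(f)$ and $[G:K]<\infty$, this yields $[\C_G(f):\C_F(f)] \leqs [G:K] < \infty$.

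For sufficiency, use the Karrass--Pietrowski--Solitar theorem to write $G = \pi_1(\mathcal{G})$ as the fundamental group of a finite graph of finite groups, and let $T$ be the corresponding Bass--Serre tree; the $G$-action on $T$ has finite vertex and edge stabilizers. Let $T_F \subseteq T$ be a minimal $F$-invariant subtree. Because $F$ is finitely generated, the quotient $\mathcal{F}_0 := F \backslash T_F$ is a finite graph of finite groups with $\pi_1(\mathcal{F}_0) \cong F$. For each vertex $v \in T_F$, with $F_v$ denoting its $F$-stabilizer, intersecting the hypothesis over the non-trivial elements of the finite group $F_v$ gives $|\C_G(F_v):\C_F(F_v)| < \infty$, which says that $F$ captures, up to finite index, the $G$-centralizer of every vertex group of its splitting.

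The plan is then to construct a finite-index subgroup $K \leqs G$ containing $F$ whose quotient $\mathcal{K} := K \backslash T$ contains $\mathcal{F}_0$ as a subgraph-of-groups and is obtained from $\mathcal{F}_0$ by attaching a finite disjoint union of trees of finite groups along edges with \emph{trivial} edge groups. Such a $\mathcal{K}$ yields $K \cong F \ast L$, with $L$ the free product of the vertex groups of the attached forest, exhibiting $F$ as a free factor. To produce $K$, one starts with any finite-index subgroup of $G$ containing $F$ (available by residual finiteness of $G$), then applies Bass's covering theory of graphs of groups, in the spirit of Bogopol'ski\v{\i} \cite{Bog-habil}, to pass to a deeper finite-index subgroup in which the attaching edge groups become trivial while $\mathcal{F}_0$ is preserved.

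The principal obstacle is trivializing \emph{every} attaching edge group simultaneously inside a single finite-index $K$. The centralizer hypothesis is essential here: without it, the link of an $F$-fixed vertex $v$ in $T$ could contain infinitely many $F_v$-orbits of edges whose stabilizers would require independent trivialization, and no finite-index $K$ could handle them all. With the hypothesis, only finitely many such $F_v$-orbits appear at each vertex of $\mathcal{F}_0$, after which a uniform choice --- for instance via M.~Hall's theorem applied to a finite-index free subgroup of $G$, together with fibre-product constructions --- yields the desired $K$.
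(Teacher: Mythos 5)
Your necessity argument is correct and amounts to the same content as the paper's route (i)$\Rightarrow$(ii)$\Rightarrow$(iii) via malnormality of a free factor. The gap is in the sufficiency direction, which as written is a plan rather than a proof: the one step that genuinely needs the centralizer hypothesis is left unproved, and the role you assign to the hypothesis is not the right one. Since the action of $G$ on $T$ is cocompact with finite vertex stabilizers, $T$ is locally finite, so each vertex of $T_F$ meets only finitely many edges of $T$ and hence only finitely many $F_v$-orbits of them; this finiteness is automatic and uses no hypothesis on centralizers. What the hypothesis must actually deliver --- and what your sketch never establishes --- is the existence of an $F$-invariant, $F$-cocompact subtree $\cX\subseteq T$ such that $\St_F(e)=\{1\}$ for every edge $e$ leaving $\cX$ (conditions (iv)/(v) of Theorem~\ref{thm:Hall_for_vf}). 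The minimal subtree $T_F$ need not have this property, and your proposed remedy, passing to a deeper finite-index subgroup $K$, cannot create it: the stabilizer $\St_F(e)$ of an edge adjacent to $T_F$ depends only on $F$ and its action on $T$, not on which finite-index $K\supseteq F$ you work inside, so the ``attaching edge groups'' cannot be trivialized by refining the cover. Without trivial boundary stabilizers, the separation step (which is fine, and is exactly Lemma~\ref{lem:sep_subtree}, proved via M.~Hall's theorem) only exhibits $F$ as a vertex group of a splitting of $K$ over finite groups, not as a free factor.

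The missing argument is the heart of the paper's implication (iii)$\Rightarrow$(iv): replace $T_F$ by its $N$-neighborhood $\cX$, where $N$ is the number of edge orbits of a free normal finite-index subgroup $G_0\lhd G$. If a non-trivial finite-order $f\in F$ fixed an edge in $\partial_{\cT}\cX$, it would fix a geodesic of length greater than $N$ issuing from $T_F$; by pigeonhole two edges of this geodesic lie in the same $G_0$-orbit, and the element $g\in G_0$ carrying one to the other is hyperbolic, commutes with $f$ (the commutator lies in $G_0$ and fixes an edge, hence is trivial), and satisfies $\langle g\rangle\cap F=\{1\}$ because its axis contains an edge outside the minimal subtree. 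This gives $|\C_G(f):\C_F(f)|=\infty$, contradicting the hypothesis, and forces the boundary stabilizers of $\cX$ to be trivial. Some argument of this kind, extracting an infinite centralizing element outside $F$ from a non-trivial boundary-edge stabilizer, is indispensable; the appeal to ``a uniform choice via M.~Hall's theorem together with fibre-product constructions'' does not substitute for it.
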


The criterion from Theorem \ref{thm:Hall-crit-simple} provides a positive solution to the above conjecture of Brunner and Burns from \cite{Brun-Burns}.

\begin{cor}\label{cor:Brun-Burns} A finitely generated virtually free group $G$ satisfies M. Hall's property if and only if every non-trivial finitely generated subgroup of
$G$ has finite index in its normalizer.
\end{cor}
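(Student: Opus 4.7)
The plan is to derive Corollary~\ref{cor:Brun-Burns} as a direct consequence of Theorem~\ref{thm:Hall-crit-simple}, translating between the normalizer condition (on arbitrary non-trivial finitely generated subgroups) and the centralizer condition (on finite order elements of a given~$F$).

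For the ``only if'' direction, I would take a non-trivial finitely generated subgroup $F\leqs G$ and invoke M.~Hall's property to write $F$ as a free factor of a finite-index subgroup $K=F*L\leqs G$. The key step is a classical normal-form argument in free products: a non-trivial free factor is self-normalizing, i.e.\ $\No_K(F)=F$. (If $k\in K\setminus F$ is chosen with reduced form ending in a syllable from $L$, then for any $1\neq f\in F$, the element $kfk^{-1}$ has alternating length $\geqslant 3$ and so cannot lie in $F$.) Combined with $\No_G(F)\cap K=\No_K(F)=F$ and $|G:K|<\infty$, this yields $|\No_G(F):F|\leqs |G:K|<\infty$.

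For the ``if'' direction, I would assume that every non-trivial finitely generated subgroup of $G$ has finite index in its normalizer, and verify the centralizer hypothesis of Theorem~\ref{thm:Hall-crit-simple} for an arbitrary finitely generated subgroup $F\leqs G$. (The case $F=1$ is trivial, since $\{1\}$ is a free factor of $G$ itself.) Given a non-trivial finite-order element $f\in F$, the cyclic subgroup $\langle f\rangle$ is non-trivial and finitely generated, so by hypothesis $|\No_G(\langle f\rangle):\langle f\rangle|<\infty$. Since $\C_G(f)\leqs \No_G(\langle f\rangle)$ and $\langle f\rangle\leqs \C_F(f)$, it follows that
\[ |\C_G(f):\C_F(f)|\leqs |\C_G(f):\langle f\rangle|\leqs |\No_G(\langle f\rangle):\langle f\rangle|<\infty, \]
and Theorem~\ref{thm:Hall-crit-simple} then exhibits $F$ as a free factor of a finite-index subgroup.

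Neither direction presents a real obstacle: once Theorem~\ref{thm:Hall-crit-simple} is granted, the corollary is essentially a bookkeeping exercise. The only non-trivial auxiliary ingredient is the self-normalizing property of a non-trivial free factor, which is a standard consequence of reduced normal forms in free products.
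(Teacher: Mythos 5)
Your proposal is correct and follows essentially the same route as the paper: the forward direction comes down to the malnormality (hence self-normalization) of a non-trivial free factor of a finite-index subgroup, and the converse verifies the centralizer criterion by applying the normalizer hypothesis to $\langle f\rangle$ for finite-order $f$, exactly as in the paper's proof via Theorem~\ref{thm:Hall_for_vf}. The only cosmetic differences are that you bound $|\C_G(f):\C_F(f)|$ through $\No_G(\langle f\rangle)$ rather than noting $\C_G(f)$ is finite, and you spell out the normal-form argument that the paper subsumes in the implication (i)$\Rightarrow$(ii) of Theorem~\ref{thm:Hall_for_vf}.
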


Section \ref{sec:open_q} collects several open questions which naturally arose in the course of writing this paper. The Appendix at the end of the paper
investigates a counter-example to the conjecture of Burns and Brunner proposed in \cite{Bogop-1}.

\noindent {\bf Acknowledgements} The author would like to thank Peter Kropholler, Ian Leary, Armando Martino, Nansen Petrosyan and Pavel Zalesskii for
valuable discussions. He is also indebted to Ian Leary for coming up with the graph of groups covers used in the Appendix. Finally, the author thanks the anonymous referee for his/her comments leading to improvements of the exposition.

\section{Background}
\subsection{Notation}
Throughout the paper $\N=\{1,2,3,\dots\}$ will denote the set of {natural numbers}. As usual,  $\Z$ will denote the integers and $\Q$ -- the rational
numbers. Given any $m \in \N$, we will write $\Z_m$ for the group of residues modulo $m$ (isomorphic to the cyclic group of order $m$).

If $G$ is a group and $g,h \in G$, the \emph{commutator} $[g,h]$ will be written as $ghg^{-1}h^{-1}$.
For an element $f \in G$ and a subgroup $H \leqs G$, $\C_G(f)$ and $\C_G(H)$ will denote the corresponding centralizers in $G$, and
$\No_G(H)$ will denote the normalizer of $H$ in $G$. If $A,B$ are two subsets of $G$, $AB=\{ab \mid a \in A, b \in B\} \subseteq G$ will denote their product.
The following fact will be used frequently in the paper:
if $A$ and $B$ are subgroups, one of which normalizes the other, then $AB=BA$ is also a subgroup of $G$.

All actions considered in the paper will be left actions.

\subsection{Trees}
For our purposes here, a \emph{tree} is a graph in the sense of Serre \cite[Chapter I, \S 2]{Serre}, which is non-empty, connected and does not contain any cycles.
Group actions on trees will all be simplicial (i.e., vertices must be mapped to vertices and edges must be mapped to edges) and without edge inversions.
We will say that  an action of a group $G$ on a tree $\cT$ is \emph{cocompact} if the quotient $G\backslash \cT$ is a finite graph.
Trees come equipped with the natural edge-path metric. We shall think of each edge $e$ being oriented: it will have an initial vertex $e_-$ and a terminal vertex $e_+$. The action of a group on a tree will always be assumed to preserve the orientation of the edges.

Let $G$ be a group acting on a tree $\cT$. Suppose that $g \in G$ and $e$ is an edge of $\cT$, and set $e'=g\,e$. We will say that \emph{$g$ translates the edge $e$} if
$e \neq e'$ and the vertices $e_+$ and $e'_-$ belong to the geodesic segment $[e_-,e'_+]$ in $\cT$.
An element $g \in G$ is \emph{hyperbolic} if it translates at least one edge of $\cT$. In this case $g$ has infinite order and
the set of all edges translated by $g$ in $\cT$ forms a simplicial line (homeomorphic to $\mathbb R$), called the \emph{axis of $g$},
on which $g$ acts by translation (see \cite[Chapter I, Propositions 4.11, 4.13]{D-D}).

It is a well-known fact (cf.  \cite[Chapter I, Proposition 4.11]{D-D}) that every element $g \in G$ is either hyperbolic
or \emph{elliptic}. The latter means that $g$ fixes at least one vertex of $\cT$.

\begin{lemma}[{\cite[Chapter I, Theorem 4.12 and Proposition 4.13]{D-D}}] \label{lem:min_inv_subtree}
Let $G$ be a finitely generated group acting on a tree $\cT$. If $G$ has no hyperbolic
elements then it fixes a vertex of $\cT$. Otherwise  the set of all edges translated by elements of $G$ forms the unique minimal $G$-invariant subtree $\cX$ of $\cT$,
and $G$ acts on $\cX$ cocompactly.
\end{lemma}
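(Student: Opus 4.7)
The plan is to treat the two cases separately, with the finite generation of $G$ used essentially in each.

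Case with no hyperbolic elements: the goal is a Serre-type fixed-point theorem, reached via the Helly property of subtrees in a tree. For any elliptic $g \in G$ the fixed set $\Fix(g)$ is a non-empty subtree of $\cT$. The crucial preparatory step is: if $g, h \in G$ are both elliptic and $gh$ is also elliptic, then $\Fix(g) \cap \Fix(h) \neq \emptyset$. To prove this, I would argue by contradiction: assume the fixed sets are disjoint and consider the shortest geodesic bridge $[p,q]$ between them; then a direct calculation shows that $gh$ translates the edge of $[p,q]$ incident to $p$, contradicting ellipticity. Armed with this lemma, pick a finite generating set $\{g_1,\dots,g_n\}$ of $G$. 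By hypothesis each $g_i$ and each product $g_i g_j$ is elliptic, so the subtrees $\Fix(g_i)$ pairwise intersect; the Helly property for finitely many subtrees of a tree then yields a common vertex, which must be fixed by all of $G$.

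Case with a hyperbolic element: define $\cX$ to be the set of edges translated by some element of $G$, equivalently the union of the axes of the hyperbolic elements of $G$. $\cX$ is manifestly $G$-invariant because conjugation carries axes to axes. The first real task is to show that $\cX$ is a subtree, and the subtle point is connectedness: given two hyperbolic elements $h_1, h_2$ with disjoint axes, I would produce a hyperbolic element (a high-power product or a suitable commutator of $h_1$ and $h_2$) whose axis runs through the bridge between $L_{h_1}$ and $L_{h_2}$, thereby joining them inside $\cX$. Minimality and uniqueness are then automatic: any $G$-invariant subtree $\mathcal{Y} \subseteq \cT$ must contain the axis of every hyperbolic element of $G$ (that axis is the unique $\langle g \rangle$-invariant line), so $\cX \subseteq \mathcal{Y}$, while $\cX$ is itself invariant and connected.

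For cocompactness, fix a vertex $v \in \cX$ and a finite symmetric generating set $S$ of $G$, and let $T_0 \subseteq \cX$ be the finite subtree obtained as the union of the geodesic segments $[v, sv]$ for $s \in S$. For any $g \in G$, factoring $g = s_1 \cdots s_k$ lets one decompose $[v, gv]$ as a concatenation of $G$-translates of segments of $T_0$, so the convex hull of the orbit $G \cdot v$ is covered by $G \cdot T_0$. By minimality this convex hull equals $\cX$, so $\cX = G \cdot T_0$ and $G \backslash \cX$ has only finitely many vertices and edges.

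I expect the main technical friction to sit in two places: the bridge-translation computation underlying the intersection lemma for fixed sets, and the connectedness of the union of axes in the hyperbolic case. Once these are in hand, the fixed-point conclusion is a clean application of Helly's property, and cocompactness is a short orbit-and-word-length argument that needs finite generation only to ensure $T_0$ is finite.
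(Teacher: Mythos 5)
Your proposal is correct in outline, but note that the paper does not prove this lemma at all: it is quoted as background, with the proof delegated to Dicks--Dunwoody \cite[Chapter I, Theorem 4.12 and Proposition 4.13]{D-D}. What you have written is essentially a reconstruction of that standard argument (going back to Serre and Tits): the bridge computation showing that the product of two elliptic elements with disjoint fixed trees is hyperbolic, combined with the Helly property of subtrees and finite generation, gives the fixed vertex; and in the hyperbolic case the union of axes is the minimal invariant subtree, with cocompactness coming from covering the convex hull of an orbit by translates of a finite tree $T_0$. Two small points of care, neither of which is a gap: in the bridge lemma, depending on orientation conventions it may be $hg$ rather than $gh$ that translates the edge of $[p,q]$ incident to $p$ in the paper's sense, but since $hg$ is conjugate to $gh$ either conclusion contradicts ellipticity of $gh$; and for connectedness of the union of axes no high powers or commutators are needed --- if the axes of $h_1,h_2$ are disjoint, the plain product $h_1h_2$ is already hyperbolic with axis containing the bridge between them. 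Also make sure you close the cocompactness step as you indicate: $\cX$ is convex and contains $G\cdot v$, so $\cX$ equals the convex hull of the orbit, which your word-length decomposition places inside $G\cdot T_0\subseteq\cX$. With these details filled in, your argument is a complete proof of the quoted result rather than a different route from the paper.
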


In particular, this lemma implies that each hyperbolic element $g \in G$ has a unique axis, so the action of the normalizer
$\mathrm{N}_G(\langle g \rangle)$ on $\cT$ must preserve this axis.

\subsection{Profinite topology}
Let $G$ be a group. The left cosets to finite index subgroups of $G$ form a basis of the \emph{profinite topology on $G$}. It is easy to see that $G$ is residually finite if and only if this topology is Hausdorff.

A subgroup of $G$ is closed in the profinite topology if and only if it is equal to the intersection of finite index subgroups.
The group $G$ is said to be \emph{cyclic subgroup separable} if all cyclic subgroups of $G$ are closed in the profinite topology.
A much stronger property is subgroup separability: $G$ is \emph{LERF} (a.k.a. \emph{subgroup separable}) if every finitely generated subgroup of $G$ is closed.
Both of these properties imply residual finiteness.

\begin{lemma}\label{lem:virt_retr->closed} If $G$ is a residually finite group and $H \vr G$ then $H$ is closed in the profinite topology on $G$.
\end{lemma}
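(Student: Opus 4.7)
The plan is to show that every element of $G \setminus H$ can be separated from $H$ by a finite-index subgroup of $G$ containing $H$; this is exactly what it means for $H$ to be an intersection of finite-index subgroups, i.e.\ closed in the profinite topology. Unpacking Definition~\ref{def:vr}, I fix a finite-index subgroup $K \leqs G$ with $H \subseteq K$ and a retraction $\rho\colon K \to H$. Since finite-index subgroups of $G$ are already closed in the profinite topology, any $g \in G \setminus K$ is immediately separated from $H$ by $K$ itself. So the task reduces to separating an arbitrary $g \in K \setminus H$ from $H$ by a finite-index subgroup of $G$.

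For such $g$, the retraction provides the leverage: since $g \notin H$, we have $\rho(g) \neq g$, so $\rho(g)g^{-1}$ is a non-trivial element of $K$. Because $G$ is residually finite and $K$ has finite index in $G$, the subgroup $K$ is residually finite as well, so I can choose a finite-index normal subgroup $N' \trianglelefteq K$ which does not contain $\rho(g)g^{-1}$. The key construction is then the subset
\[
L \coloneq \{\, k \in K : \rho(k)N' = kN' \,\},
\]
which I would verify is a subgroup by identifying it as the preimage of the diagonal under the homomorphism $K \to K/N' \times K/N'$ sending $k \mapsto (kN',\rho(k)N')$. Since the target is finite, $L$ has finite index in $K$, and hence in $G$. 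Evidently $H \subseteq L$ (as $\rho(h) = h$ for every $h \in H$), while $g \notin L$ by the choice of $N'$, giving the desired separation.

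I do not expect a real obstacle here. Residual finiteness of $K$ is automatic from that of $G$, and the only non-routine move is recognising that the retraction $\rho$ turns any finite quotient $K/N'$ of $K$ into a finite-index subgroup of $K$ consisting precisely of those elements which agree with their $\rho$-image modulo $N'$. Everything else is a direct case split and a straightforward check that $L$ is a subgroup.
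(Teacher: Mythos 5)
Your proof is correct, and it follows the same skeleton as the paper's but is self-contained where the paper relies on a citation. The paper's proof simply observes that $K$ is residually finite, quotes a lemma of Hsu--Wise for the fact that a retract of a residually finite group is closed in its profinite topology, and then uses that a closed subset of the finite-index subgroup $K$ is closed in $G$. Your argument unpacks both steps: separating $g \in G \setminus K$ by $K$ itself is exactly the elementary form of the ``closedness passes up through finite index'' step, and your diagonal construction $L = \{k \in K : \rho(k)N' = kN'\}$ (the preimage of the diagonal in $K/N' \times K/N'$) is precisely the standard proof of the cited Hsu--Wise lemma, so nothing is outsourced. One small point worth making explicit in a write-up: $g \notin L$ amounts to $g^{-1}\rho(g) \notin N'$, whereas you arranged $\rho(g)g^{-1} \notin N'$; these are equivalent because you chose $N'$ normal in $K$ (and normality is in any case needed for $K/N'$ to be a group), so the verification does go through. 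What your route buys is independence from the external reference at the cost of a slightly longer argument; what the paper's route buys is brevity.
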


\begin{proof} Let $K \leqslant G$ be a finite index subgroup retracting onto $H$. Then $K$ is residually finite, as a subgroup of $G$, so $H$ is closed in
the profinite topology on $K$ by \cite[Lemma 3.9]{Hsu-Wise}. Since $|G:K|<\infty$, every closed subset of $K$ is also closed in $G$, hence $H$ is closed in the
profinite topology on $G$.
\end{proof}

\begin{lemma}\label{lem:vrc->rf} %(cf. \cite[Theorem 2.4]{L-R})
If $G$ is a group with property \VRC{} then $G$ is residually finite.
\end{lemma}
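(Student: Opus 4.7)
The plan is to show directly that for every non-trivial $g \in G$ there exists a finite index subgroup of $G$ avoiding $g$. Since cyclic groups are residually finite, property \VRC{} should transfer this residual finiteness from each $\langle g \rangle$ to the whole group $G$ via a retraction.

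More concretely, I would fix an arbitrary element $g \in G \setminus \{1\}$ and apply \VRC{} to the cyclic subgroup $\langle g \rangle$: this yields a subgroup $K \leqs G$ of finite index containing $g$, together with a retraction $\rho \colon K \to \langle g \rangle$ such that $\rho(g) = g$. Because $\langle g \rangle$ is cyclic (hence residually finite), I can pick a finite index subgroup $L \leqs \langle g \rangle$ with $g \notin L$; for instance $L = \{1\}$ if $g$ has finite order, and $L = \langle g^2 \rangle$ if $g$ has infinite order. Then $\rho^{-1}(L)$ is a subgroup of $K$ of index $|\langle g \rangle : L| < \infty$, and since $\rho(g) = g \notin L$ we have $g \notin \rho^{-1}(L)$.

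Finally, because $|G:K|<\infty$ and $|K:\rho^{-1}(L)|<\infty$, the subgroup $\rho^{-1}(L)$ has finite index in $G$ and misses $g$. This is exactly what is needed to conclude that $G$ is residually finite (one may if desired replace $\rho^{-1}(L)$ by its normal core in $G$, which remains of finite index and still does not contain $g$). The argument is essentially immediate from the definitions, so there is no real obstacle; the only point to be slightly careful about is exhibiting a suitable proper finite index $L$ in both the torsion and torsion-free cases, which the two choices above handle uniformly.
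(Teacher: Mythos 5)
Your proof is correct and follows essentially the same route as the paper: given a non-trivial $g$, use the retraction $\rho\colon K \to \langle g\rangle$ from property \VRC{} and pull back a proper finite index subgroup of $\langle g\rangle$ avoiding $g$ (the paper phrases this as $\ker\rho$ in the torsion case and $\ker(\eta\circ\rho)$ with $\eta\colon\langle g\rangle\twoheadrightarrow\Z_2$ in the infinite order case, which are exactly your $\rho^{-1}(L)$ for the two choices of $L$). Nothing further is needed.
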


\begin{proof}
Indeed, let $g \in G \setminus \{1\}$. Then for some finite index subgroup $K \leqslant G$, containing  $g$, there is a retraction $\rho:K \to \langle g \rangle$.
If $g$ has finite order in $G$, then $\ker\rho$ is a subgroup of finite index of $K$ (and, hence, of $G$) which does not contain $g$. If $g$ has infinite order, there is an epimorphism $\eta: \langle g \rangle \to \Z_2$, where $\Z_2$ is the cyclic group of order two, generated by $\eta(g)$. In this case we see that
$g \notin \ker(\eta\circ \rho)$, and this kernel has finite index in $G$.

Thus for each $g \in G\setminus\{1\}$ we found a finite index subgroup of $G$ which does not contain $g$. This means that $G$ is residually finite.
\end{proof}

\begin{lemma} \label{lem:closed->sep} Let $G$ be a group and $H \subseteq A$ be two subgroups such that $|A:H|<\infty$ and $H$ is closed in the profinite topology on $G$.
Then there exists a finite index subgroup $L \leqslant G$ such that $A \cap L=H$.
\end{lemma}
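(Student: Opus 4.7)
The plan is to exploit the finite index of $H$ in $A$ by separating the non-identity coset representatives of $H$ in $A$ from $H$ using the hypothesis that $H$ is closed in the profinite topology of $G$.

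First I would fix a transversal $a_1 = 1, a_2, \dots, a_n \in A$ for $H$ in $A$, where $n = |A:H| < \infty$. For each $i \geq 2$, the element $a_i$ lies in $G \setminus H$. Since $H$ is closed in the profinite topology on $G$, the complement $G \setminus H$ is open, so there exists a finite index subgroup $M_i \leqslant G$ with $a_i M_i \cap H = \emptyset$, equivalently $a_i \notin H M_i$.

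Next I would replace these finitely many subgroups by a single normal finite index subgroup $N \trianglelefteq G$ contained in $\bigcap_{i=2}^n M_i$ (the normal core of the intersection works). Because $N$ is normal, $HN$ is a subgroup of $G$, and $|G : HN| \leq |G:N| < \infty$; moreover $a_i \notin HN$ for every $i \in \{2, \dots, n\}$. Set $L \coloneq HN$.

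Finally I would verify that $A \cap L = H$. The inclusion $H \subseteq A \cap L$ is immediate. For the reverse inclusion, take any $a \in A \cap L$ and write $a = a_i h$ with $h \in H$ using the chosen transversal. Then $a_i = a h^{-1} \in L$ (since $L$ contains $H$ and $a$), forcing $a_i \in HN$; by construction this is only possible when $i = 1$, so $a = h \in H$. There is no real obstacle here — the only thing to be mindful of is that $HN$ needs to be a subgroup, which is why I take $N$ normal rather than using the $M_i$ directly.
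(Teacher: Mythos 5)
Your proof is correct and follows essentially the same route as the paper: separate the finitely many nontrivial coset representatives of $H$ in $A$ from $H$ using closedness, pass to a finite index normal subgroup $N$ with $a_i \notin HN$, and take $L = HN$. The only difference is that you spell out explicitly the reduction from the basic open cosets $a_iM_i$ to a single normal $N$ and the verification $A \cap L = H$, which the paper leaves as "easy to see".
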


\begin{proof} Suppose that $A=H \sqcup \bigsqcup_{i=1}^n a_i H$, for some $a_1,\dots,a_n \in A\setminus H$. Since $H$ is closed in the profinite topology on $G$,
there exists a finite index normal subgroup $N \lhd G$ such that $a_i \notin HN$ for each $i=1,\dots, n$. It is easy to see that $L=HN$ is a finite index subgroup of $G$
satisfying the desired property.
\end{proof}

\section{Virtual retractions}
\subsection{Basic properties of virtual retractions.}
Suppose that $H \vr G$, and $K \leqslant G$ is a finite index subgroup containing $H$ and admitting a retraction $\rho:K \to H$. Then $N=\ker\rho \lhd K$, $HN=K$
and $H\cap N=\{1\}$ (in other words, $K \cong N \rtimes H$). Conversely, it is not difficult to see that the existence of $N$ with the above properties is sufficient to show that $H \vr G$.
This observation will be used throughout the paper without further reference.

\begin{rem}\label{rem:equiv_virt_retr} Given a group $G$ with a subgroup $H$, $H$ is a virtual retract of $G$ if and only if there exists a subgroup $N \leqslant G$,
normalized by $H$, such that $|G:HN|<\infty$ and $H \cap N=\{1\}$.
\end{rem}

Let us now state a few basic properties of the relation $\vr$ from Definition \ref{def:vr}.
\begin{lemma}\label{lem:basic_props_vr}
Suppose that $G$ and $G'$ are groups.

\begin{itemize}
  \item[(i)] Let $H \vr G$ and let $A \leqslant G$ be any subgroup containing $H$. Then $H \vr A$.
  \item[(ii)] Suppose that $H\leqslant G$ and $\varphi:G \to G'$ is a homomorphism whose restriction to $H$ is injective and $\varphi(H) \vr G'$. Then $H \vr G$.
  \item[(iii)] If $H \vr G$ and $\alpha:G \to G$ is an automorphism then $\alpha(H) \vr G$. In particular, $gHg^{-1} \vr G$ for every $g \in G$.
   \item[(iv)] If $H \vr G$ and $F \vr H$ then $F \vr G$. In particular, if $H \vr G$ and $F \leqslant H$ is a subgroup with $|H:F|<\infty$ then $F \vr G$.
  \item[(v)] If $H \vr G$ and $H' \vr G'$ then $H \times H' \vr G \times G'$.
\end{itemize}
\end{lemma}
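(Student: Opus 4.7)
The plan is to work directly from the definition and from Remark~\ref{rem:equiv_virt_retr}: in each case I will produce the witnessing finite index subgroup together with an explicit retraction (or, equivalently, a complementary normal subgroup). All five parts are essentially diagram-chasing, so I do not anticipate any real obstacle; the only point that requires a little care is part (iv), where two retractions must be composed while staying inside a subgroup of finite index.

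For (i), I would take a witness $K \leqs G$ of finite index with retraction $\rho\colon K \to H$, and intersect with $A$: the subgroup $A \cap K$ has finite index in $A$, contains $H$, and $\rho|_{A\cap K}$ is a retraction onto $H$. For (ii), given a witness $K' \leqs G'$ and a retraction $\rho'\colon K' \to \varphi(H)$, I would set $K = \varphi^{-1}(K')$; this has finite index in $G$ and contains $H$, and $(\varphi|_H)^{-1}\circ \rho' \circ \varphi|_K$ is the desired retraction, with the injectivity of $\varphi|_H$ making the inverse well-defined on the image. For (iii), the image $\alpha(K)$ of a witness $K$ for $H \vr G$ is a finite index subgroup containing $\alpha(H)$, and $\alpha \circ \rho \circ \alpha^{-1}$ retracts it onto $\alpha(H)$; the conjugation statement follows by taking $\alpha$ to be the inner automorphism $x \mapsto gxg^{-1}$.

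For (iv), let $K \leqs G$ be a finite index subgroup with retraction $\rho\colon K \to H$, and let $L \leqs H$ be a finite index subgroup containing $F$ with retraction $\sigma\colon L \to F$. The natural candidate is $M \coloneq \rho^{-1}(L) \leqs K$. Since $L$ has finite index in $H=\rho(K)$, standard preimage arithmetic gives $|K:M|=|H:L|<\infty$, so $M$ has finite index in $G$; it contains $F$ because $F \subseteq L \subseteq M$; and $\sigma \circ \rho|_M$ is a retraction onto $F$, since on $F$ the map $\rho$ is the identity (as $F \subseteq H$) and then so is $\sigma$. For the final sentence of (iv), a finite index subgroup is trivially a virtual retract (by Definition~\ref{def:vr}, taking $K$ to be the subgroup itself with the identity retraction), so $F \vr H$ when $|H:F|<\infty$ and the first half of (iv) applies.

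For (v), I would take witnesses $K \leqs G$, $K' \leqs G'$ with retractions $\rho\colon K \to H$ and $\rho'\colon K' \to H'$. Then $K \times K' \leqs G \times G'$ has finite index equal to $|G:K|\cdot |G':K'|$, contains $H \times H'$, and the product map $\rho \times \rho'\colon K \times K' \to H \times H'$ is clearly a retraction. The main ``obstacle'', such as it is, appears in (iv): one must resist the temptation to use $L$ directly, which has no reason to have finite index in $G$, and instead pull $L$ back through $\rho$ to enlarge it to a finite index subgroup of $G$ before composing retractions.
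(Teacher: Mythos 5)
Your proposal is correct; all five constructions work as stated. The main difference from the paper is organizational: the paper proves (ii) by first reducing to the case of surjective $\varphi$ (via (i)) and then pulling back a normal complement $N'$ of $\varphi(H)$ using the characterization of Remark~\ref{rem:equiv_virt_retr}, and it then derives (iii) and (iv) as quick consequences of (ii) --- for (iv) one observes that the retraction $\rho\colon K\to H$ is injective on $F$ with $\rho(F)=F\vr H$, so (ii) gives $F\vr K$ and hence $F\vr G$. You instead build an explicit retraction in every case: for (ii) the composite $(\varphi|_H)^{-1}\circ\rho'\circ\varphi|_{\varphi^{-1}(K')}$, for (iii) the conjugated retraction $\alpha\circ\rho\circ\alpha^{-1}$ on $\alpha(K)$, and for (iv) the composite $\sigma\circ\rho|_M$ on $M=\rho^{-1}(L)$, where the key (and correctly identified) point is to pull $L$ back through $\rho$ so as to land in a finite index subgroup of $G$, using surjectivity of $\rho$ to get $|K:M|=|H:L|<\infty$ and $\rho|_H=\mathrm{id}$ to get $L\subseteq M$. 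Your route avoids the normal-complement formulation altogether and makes each part self-contained, at the cost of slightly more bookkeeping; the paper's route is shorter once Remark~\ref{rem:equiv_virt_retr} is available and illustrates how (ii) serves as the workhorse for the later parts (it is reused elsewhere in the paper in exactly that role). Parts (i) and (v) coincide with the paper's argument.
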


\begin{proof} Claim (i) is obvious (one can just restrict the retraction from a finite index subgroup of $G$ to a finite index subgroup of $A$) and claim (iii) is an immediate consequence of (ii).

To establish claim (ii) it is convenient to assume that $\varphi$ is surjective (which we can do in view of claim (i)) and to use Remark \ref{rem:equiv_virt_retr}.
Thus, if $\varphi(H) \vr G'$ there must exist $N' \leqslant G'$, normalized by $\varphi(H)$,
such that the subgroup $\varphi(H) N'$ has finite index in $G'$ and $\varphi(H) \cap N'=\{1\}$.
Let $N=\varphi^{-1}(N')$ be the full preimage of $N'$ in $G$.
Then $N$ will be normalized by $H$ and $|G:HN|=|G':\varphi(H)N'|<\infty$. Finally, $H \cap N=\{1\}$ in $G$ because $\varphi(H) \cap \varphi(N)=\{1\}$ and $H \cap \ker\varphi=\{1\}$ as $\varphi$ is injective on $H$. Hence $H \vr G$ by Remark \ref{rem:equiv_virt_retr}.

To prove claim (iv), suppose that $H \vr G$ and $F \vr H$. Then there exists a finite index subgroup $K \leqslant G$ and a homomorphism $\rho: K \to H$
such that $H \subseteq K$ and $\rho(h)=h$ for all $h \in H$. Since $F \subseteq H$ we can deduce that $\rho$ is injective on $F$ and $\rho(F)=F \vr H$. Therefore $F \vr K$ by claim (ii),
which implies that $F \vr G$ as $|G:K|<\infty$.

It remains to establish the validity of claim (v). Assuming $H \vr G$ and $H' \vr G'$, we can find finite index subgroups $K \leqslant G$ and $K' \leqslant G'$ such that
$H \subseteq K$, $H' \subseteq K'$, and there are retractions $\rho: K \to H$, $\rho':K' \to H'$. Evidently $\varphi=(\rho,\rho'):K \times K' \to H \times H'$
is a retraction from $K \times K'$, which is a subgroup of finite index in $G \times G'$, onto $H\times H'$. Hence $H \times H' \vr G \times G'$.
\end{proof}

Let $G$ be a group generated by a finite set $S$, and let $H \leqslant G$ be a subgroup generated by a finite set $T$. For $g \in G$ and $h \in H$
we will use $|g|_S$ and $|h|_T$ to denote the \emph{word lengths} of $g$ and $h$ corresponding to the generating sets $S$ and $T$. Recall that the subgroup $H$
is said to be \emph{undistorted in $G$} if there is $C >0$ such that for all $h \in H$ we have $|h|_T \le C |h|_S$. This is equivalent to saying that the
embedding $H \hookrightarrow G$ is quasi-isometric with respect to the word metrics on $H$ and $G$.
Moreover, this notion is independent of the choices of finite generating sets $S$ for $G$ and $T$ for $H$.
The following observation (cf. \cite[Lemma 2.2]{Dav-Ols}) can be used to show that a subgroup is not a virtual retract of a group $G$.

\begin{rem}\label{rem:vr->undist}
Suppose that $G$ is a finitely generated group and $H \leqslant G$. If $H\vr G$ then $H$ is finitely generated and undistorted in $G$.
\end{rem}

\subsection{Promoting retractions to finite index overgroups}
Let us discuss some background to Question \ref{q:quest-1} from the Introduction.
First we observe that in the case when $H$ is actually a retract of $G$, it is sufficient to require residual finiteness of $G$.

\begin{lemma}\label{lem:retract_fin-ext}
Let $G$ be a residually finite group. Suppose that $H$ is a retract of $G$ and $A \leqslant G$ is a subgroup satisfying $H \subseteq A$ and $|A:H|<\infty$.
Then $A \vr G$. In particular, $B \vr G$ for every finite subgroup $B$ of $G$.
\end{lemma}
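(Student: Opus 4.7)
The plan is to verify the criterion of Remark~\ref{rem:equiv_virt_retr}: I will produce a subgroup $N'\leqs G$ normalized by $A$ such that $|G:AN'|<\infty$ and $A\cap N'=\{1\}$.

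Let $\rho:G\to H$ be a retraction and set $N=\ker\rho$, so that $G=N\rtimes H$. First I would restrict $\rho$ to $A$: since $H\subseteq A$ and $\rho$ fixes $H$ pointwise, $\rho|_A\colon A\to H$ is surjective with kernel $F:=A\cap N$. Because every $a\in A$ satisfies $a\rho(a)^{-1}\in A\cap N=F$, we have $A=FH$, and together with $H\cap F\subseteq H\cap N=\{1\}$ this exhibits $A$ as an internal semidirect product $A=F\rtimes H$. Hence $|F|=|A:H|<\infty$; in other words, $F$ is a \emph{finite} subgroup of $G$. Invoking the residual finiteness of $G$, I would choose a finite-index normal subgroup $M\lhd G$ with $M\cap F=\{1\}$ and set $N':=M\cap N$. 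Since $M,N\lhd G$, the subgroup $N'$ is normal in $G$ and so is certainly normalized by $A$; moreover $A\cap N'=(A\cap N)\cap M=F\cap M=\{1\}$.

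The step that requires the most thought is the finite-index condition. Since $G=NH$ and $H\subseteq A$, one has $G=NA$, and because $N\lhd G$ this yields $G=N\cdot AN'$. Thus every right coset of $AN'$ in $G$ admits a representative from $N$, and two elements $n_1,n_2\in N$ represent the same coset exactly when $n_1n_2^{-1}\in AN'\cap N$. Consequently $|G:AN'|=|N:AN'\cap N|\le|N:N'|\le|G:M|<\infty$, the final inequality coming from the standard embedding $N/(N\cap M)\hookrightarrow G/M$. An application of Remark~\ref{rem:equiv_virt_retr} then gives $A\vr G$.

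For the ``in particular'' clause I would apply the first part with $H=\{1\}$, which is a retract of every group via the trivial homomorphism, and with $A:=B$: since $B$ is finite, $|A:H|=|B|<\infty$, and the first part yields $B\vr G$.
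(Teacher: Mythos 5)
Your proposal is correct and follows essentially the same route as the paper: take $N=\ker\rho$, note $A\cap N$ is finite, use residual finiteness to find a finite-index normal subgroup $M$ avoiding it, set $N'=M\cap N$, and verify the criterion of Remark~\ref{rem:equiv_virt_retr}. Your coset-counting for $|G:AN'|<\infty$ is just a more explicit version of the paper's observation that $|N:N'|<\infty$ forces $HN'$ (hence $AN'$) to have finite index in $HN=G$.
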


\begin{proof} Let $\rho:G \to H$ be a retraction and let $N=\ker\rho \lhd G$.
Then $G=HN$ and $H \cap N=\{1\}$; therefore the intersection $A \cap N$ must be finite, as $|A:H|<\infty$. Since $G$ is residually finite, there exists a finite index normal
subgroup $L \lhd G$ such that $L \cap (A \cap N)=\{1\}$. Denote $N'=L\cap N$ and observe that $N' \lhd G$ and $|N:N'|<\infty$. It follows that $HN'$ has finite index in $HN=G$,
so $|G:AN'|<\infty$. Finally, $A \cap N'=A \cap (L \cap N)=\{1\}$ by the choice of $L$, hence $A \vr G$ by Remark \ref{rem:equiv_virt_retr}.

The second claim follows from the first one by applying it to the case when $H=\{1\}$.
\end{proof}

\begin{ex}\label{ex:rf_neces} Let us demonstrate that the assumption of residual finiteness of $G$  is important in Lemma \ref{lem:retract_fin-ext}. Indeed, suppose that $G$ is a group with a non-trivial finite subgroup $A \leqslant G$ such that $A$ is contained in every finite index subgroup of $G$
(for example, this will satisfied if $G$ is infinite and simple). Obviously the trivial subgroup $H=\{1\}$ is a retract of $G$,
but $A$ cannot be a virtual retract of $G$ (even though $|A:H|=|A|<\infty$). Indeed, otherwise there would exist $N \leqslant G$ such that $N$ is normalized by $A$,
$|G:AN|<\infty$ and $A \cap N=\{1\}$. It follows that $|G:N|<\infty$, so $N$ is a finite index subgroup of $G$ which intersects $A$ trivially, contradicting our assumption.
\end{ex}

We will now construct examples showing that in Lemma \ref{lem:retract_fin-ext} one cannot replace the assumption that
$H$ is a retract of $G$ with the weaker assumption that $H\vr G$.

\begin{ex}\label{ex:wreath}
Let $X$ be any group and let $G=X \wr \langle \alpha \rangle_2 \cong X \wr \Z_2$ be the wreath product of
$X$ with a cyclic group of order $2$ generated by $\alpha$. In other words,
$G=(X \times X) \rtimes \langle \alpha \rangle_2$, where $\alpha$ acts on $X \times X$ by interchanging the two factors: $\alpha (x,y) \alpha^{-1}=(y,x)$ for all
$(x,y) \in X\times X$. Observe that the diagonal subgroup $H=\{(x,x) \in X \times X\}$ is centralized by $\alpha$,  and so $A=\C_G(\alpha)=H \langle \alpha\rangle$
is a subgroup of $G$ with $|A:H|=2$. Note, also,  that $H \cong X$.

Clearly $H$ is a retract of $X \times X$ (for example, under the map $(x,y) \mapsto (x,x)$), so $H \vr G$. However Proposition \ref{prop:wreath_ex} below shows that
$A$ will not be a virtual retract of $G$ unless $X$ is virtually abelian. Thus, for example, if $X$ is the free group of rank $2$ then the subgroup $H$
is a virtual retract of $G=X \wr \langle \alpha \rangle_2$, but its index $2$ overgroup $A$ is not.
\end{ex}

In the next proposition and its proof we will use the notation of Example \ref{ex:wreath}.

\begin{prop}\label{prop:wreath_ex} If $A \vr G$ then $X \cong H$ has an abelian subgroup of finite index.
\end{prop}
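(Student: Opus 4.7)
The plan is as follows. Assume $A \vr G$; then Remark~\ref{rem:equiv_virt_retr} provides a subgroup $N \leqs G$, normalized by $A$, with $A \cap N = \{1\}$ and $|G : AN| < \infty$, and $K := AN$ retracts onto $A$ via $\rho : K \to A$ with $\ker \rho = N$. Set $N' := N \cap (X \times X)$ and $K_0 := K \cap (X \times X)$. A direct case analysis (writing an element of $K_0$ as a product $an$ with $a \in A$, $n \in N$ and tracking whether its $\alpha$-component is trivial) shows $K_0 = H N'$, with $H \cap N' = \{1\}$; hence the restriction $\rho_0 := \rho|_{K_0}$ is a retraction of $K_0$ onto $H$ with kernel $N'$. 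Moreover, since $\alpha \in K$ and $\rho(\alpha) = \alpha$, the homomorphism $\rho_0$ is $\alpha$-equivariant, where $\alpha$ acts on $K_0$ by swapping the two coordinates and fixes $H$ pointwise.

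I would then choose a finite-index subgroup $X_0 \leqs X$ such that $(x,1) \in K_0$ for every $x \in X_0$; this is possible because $K_0 \cap (X \times \{1\})$ has finite index in $X \times \{1\} \cong X$. The $\alpha$-equivariance of $\rho_0$ ensures that $(1, x) \in K_0$ as well. Define $\phi : X_0 \to X$ by $\rho_0((x, 1)) = (\phi(x), \phi(x)) \in H$. Then $\phi$ is a homomorphism (because $\rho_0$ is), and $\alpha$-equivariance also gives $\rho_0((1, x)) = (\phi(x), \phi(x))$. Applying $\rho_0$ to $(x, x) = (x, 1)(1, x)$ and using that $\rho_0$ fixes the diagonal element $(x,x)$ yields $(\phi(x)^2, \phi(x)^2) = (x, x)$, so $\phi(x)^2 = x$ for every $x \in X_0$; in particular $\phi$ is injective.

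The endgame is a short calculation. For $x_1, x_2 \in X_0$,
\[
\phi(x_1)^2 \phi(x_2)^2 = x_1 x_2 = \phi(x_1 x_2)^2 = \bigl(\phi(x_1) \phi(x_2)\bigr)^2,
\]
which forces $\phi(x_1) \phi(x_2) = \phi(x_2) \phi(x_1)$ after cancelling the outermost $\phi(x_1)$ on the left and $\phi(x_2)$ on the right. Thus $Y := \phi(X_0)$ is an abelian subgroup of $X$, and since $x = \phi(x)^2 \in Y$ for every $x \in X_0$, we have $X_0 \subseteq Y$, so $Y$ has finite index in $X \cong H$.

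The main obstacle I anticipate is the structural step of verifying that $\rho_0$ takes values in $H$, i.e.\ that $K_0 = H N'$; this requires a careful accounting of the $\alpha$-components of elements of $K = AN$. Once this is in place and the $\alpha$-equivariance of $\rho_0$ is observed, the ``square-root homomorphism'' $\phi$ and the commutativity of $Y$ drop out immediately.
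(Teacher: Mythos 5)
Your endgame is correct, and it is a genuinely different finish from the paper's: the paper proves commutativity of a finite-index subgroup of $X\times\{1\}$ by a direct commutator computation with elements of $N$, whereas you extract a retraction of $K_0$ onto $H$, observe its invariance under the coordinate swap, and produce a homomorphic square root $\phi$ with $\phi(x)^2=x$, which forces commutativity at once. The problem is precisely the structural step you flag: for an arbitrary witness of $A \vr G$ it is simply not true that $K_0=HN'$, nor that $\rho$ maps $K_0=K\cap(X\times X)$ into $H$. In your case analysis, writing $k=an$ with $a\in A$, $n\in N$, the case $a\in H\alpha$ cannot be excluded; it occurs exactly when $n\notin X\times X$, and nothing in Remark~\ref{rem:equiv_virt_retr} forces $N\subseteq X\times X$. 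Concretely, take $X=\Z$, so $G=(\Z\times\Z)\rtimes\langle\alpha\rangle$, and let $N=\langle (2,-2),\,(1,-1)\alpha\rangle$, an infinite dihedral subgroup. One checks that $N$ is normalized by $A$, that $A\cap N=\{1\}$ and that $|G:AN|\le 2$, so this is a legitimate witness; yet $(-1,1)=\alpha\cdot(1,-1)\alpha$ lies in $K_0$, satisfies $\rho\bigl((-1,1)\bigr)=\alpha\notin H$, and does not lie in $HN'$, where $N'=\langle(2,-2)\rangle$. So ``tracking the $\alpha$-component'' does not by itself yield $K_0=HN'$.

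The missing idea is the reduction the paper makes in its very first line: replace $N$ by $N\cap(X\times X)$, and accordingly $K$ by $A\,\bigl(N\cap(X\times X)\bigr)$, which still contains $A$, intersects it trivially, and has finite index in $G$ because $N\cap(X\times X)$ has index at most $2$ in $N$. Once $N\subseteq X\times X$, your case analysis does work: $k=an\in X\times X$ with $n\in X\times X$ forces $a\in A\cap(X\times X)=H$, so $K_0=HN'$ and $\rho_0$ is a retraction onto $H$. From there the swap-invariance, the definition of $\phi$, the identity $\phi(x)^2=x$, and the computation $\phi(x_1)^2\phi(x_2)^2=\bigl(\phi(x_1)\phi(x_2)\bigr)^2$ all go through and give the required finite-index abelian subgroup. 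So the gap is small and repairable, but as written the key identity your argument rests on is false for a general choice of $N$.
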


\begin{proof} Suppose that there is a finite index subgroup $K \leqslant G$ and a normal subgroup $N \lhd K$ such that $A \subseteq K$, the intersection $A \cap N$ is
trivial  and $K=AN$. Since $|G:(X\times X)|=2$, after replacing $N$ with $N \cap (X \times X)$, we can assume that $N \subseteq X \times X$.
Let $X_1=X \times \{1\} \leqslant G$ and $M=K \cap X_1$. Then $|X_1:M|<\infty$ and we will show that $M$ is abelian.

If $(x,1) \in  N$ for some $x \in X$, then $(x,x)= (x,1) \alpha (x,1) \alpha^{-1} \in A \cap N$, since $A$ contains the diagonal subgroup of $X \times X$ and
$\alpha \in A$ normalizes $N$. Hence $x=1$, and thus
\begin{equation}\label{eq:triv_intersec}
X_1 \cap N=\{(1,1)\} \mbox{ in } X \times X.
\end{equation}

Now, for all $(x,1) \in M$ and $(a,b) \in  N$, the commutator $[(x,1), (a,b)]=([x,a],1)$ belongs to $N$, as
$M \subseteq K$ normalizes $N$. In view of \eqref{eq:triv_intersec}, the latter implies that $[x,a]=1$, i.e.,
\begin{equation}\label{eq:x_comm_with_a}
\mbox{each $(x,1) \in M$ commutes with $(a,1) \in X_1$, provided $\exists~b \in X$ with $(a,b) \in N$. }
\end{equation}

Now, for every $(a,1) \in M \leqslant K=AN$ there must exist $\epsilon \in \{0,1\}$, $(c,c) \in H$ and $(d,e) \in N$ such that $(a,1)=\alpha^\epsilon (c,c)(d,e)$.
Then $\epsilon=0$,  $ce=1$, and so $a=cd=e^{-1}d$. Finally, observe that for $b=d^{-1}e \in X$ we have
\[(a,b)=(e^{-1},d^{-1})(d,e)=\alpha (d,e)^{-1} \alpha^{-1} (d,e) \in N,
\]   as $N$ is normalized by $\alpha \in K$. Recalling \eqref{eq:x_comm_with_a}, we deduce that $(a,1)$ is central in $M$. Hence $M$ is abelian, and so $X_1 \cong X \cong H$
are virtually abelian.
\end{proof}

Proposition \ref{prop:wreath_ex} implies that for $A \vr G$ in Example \ref{ex:wreath}, $X$ must necessarily be virtually abelian.
However this is not always sufficient, as the following exercise shows.

\begin{exercise}\label{exer:ab_wreath} Using the notation of Example \ref{ex:wreath}, suppose that the group $X$ is abelian.
Then $A\vr G$ if and only if $X$ has a finite index subgroup without $2$-torsion and $|X/X^2|<\infty$.
\end{exercise}

\begin{rem} One can easily make torsion-free examples similar to Example~\ref{ex:wreath}. Indeed, let $X$ be a torsion-free group which is not virtually abelian, and let
$G=(X \times X) \rtimes \langle \beta \rangle$ be a semidirect product of $X \times X$ with the infinite cyclic group generated by $\beta$,
where $\beta (x,y) \beta^{-1}=(y,x)$, for all $(x,y) \in X \times X$
(i.e., $\beta$ induces the same involution of $X\times X$ as $\alpha$ did in Example~\ref{ex:wreath}). Let $D=\{ (x,x)\mid x \in X\}$ be the diagonal subgroup of $X \times X$,
let $H=D \langle \beta^2\rangle$ and $A=\C_G(\beta)=D \langle \beta\rangle$. Obviously
$G$ is torsion-free,  $H \cong X \times \Z \cong A$, $|H:A|=2$ and
$H \vr G$ (as $\langle X\times X, \beta^2 \rangle \cong X\times X \times \langle \beta^2\rangle$ has index $2$ in $G$).
However, an argument very similar to the proof of Proposition~\ref{prop:wreath_ex} shows that $A$ is not a virtual retract of $G$.
\end{rem}

\section{Retractions onto virtually abelian subgroups}\label{sec:retr_on_vab}
The aim of this section is to prove Theorem \ref{thm:virt_ab-lift}, which provides a strong converse to Proposition \ref{prop:wreath_ex}.
The argument will make use of a few auxiliary statements.

\begin{lemma} \label{lem:retr_onto_virt_ab->homom} Let $G$ be a residually finite group and let $H \vr G$ be a finitely generated virtually abelian subgroup.
If $A \leqslant G$ contains $H$ and $|A:H|<\infty$
then there is a finitely generated virtually abelian group $P$ and an epimorphism $\varphi:G \to P$ such that $\varphi$ is injective on $A$.
\end{lemma}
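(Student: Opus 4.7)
The plan is to combine residual finiteness of $G$ with the standard wreath-product embedding coming from the left action of $G$ on $G/K$, producing a homomorphism from $G$ into a finitely generated virtually abelian group whose kernel avoids $A\setminus\{1\}$.

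First I would arrange that $A \cap K = H$. By Lemma~\ref{lem:virt_retr->closed}, the subgroup $H$ is closed in the profinite topology on $G$, so Lemma~\ref{lem:closed->sep} provides a finite-index subgroup $L \leqs G$ with $A \cap L = H$. Choose $K \leqs G$ of finite index with $H \subseteq K$ and a retraction $\rho\colon K \to H$, and then replace $K$ by $K \cap L$; the restricted $\rho$ remains a retraction onto $H$, and now $A \cap K \subseteq A \cap L = H$, so $A \cap K = H$. Letting $N = \ker \rho \lhd K$, we get $N \cap A \subseteq K \cap A = H$, hence $N \cap A \subseteq N \cap H = \{1\}$.

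Next I would induce $\rho$ up to $G$. Let $d = |G:K|$, fix coset representatives $g_1 = 1, g_2, \ldots, g_d$ for $G/K$, and for each $g \in G$ define $\sigma_g \in S_d$ by $g g_i K = g_{\sigma_g(i)} K$, together with $k_{g,i} = g_{\sigma_g(i)}^{-1} g g_i \in K$. The classical wreath-product construction then yields a homomorphism
\[\Phi\colon G \to H \wr S_d = H^d \rtimes S_d,\]
whose symmetric-group coordinate at $g$ is $\sigma_g$ and whose $H$-coordinates are the $\rho(k_{g,i})$ (up to the reindexing required by the chosen semidirect-product convention). If $g \in \ker\Phi$, then $\sigma_g = \mathrm{id}$ (so $g$ belongs to the normal core of $K$, in particular to $K$ itself) and all $\rho(k_{g,i})$ are trivial; applied with $i = 1$ and $g_1 = 1$ this forces $\rho(g) = 1$, so $g \in N$. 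Hence $\ker\Phi \subseteq N$, and together with $N \cap A = \{1\}$ this gives $\Phi\vert_A$ injective.

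Lastly I would identify $P$. Pick an abelian finite-index subgroup $H_1 \lhd H$; then $H_1^d$ is abelian, $S_d$-invariant and of finite index in $H \wr S_d$, so $H \wr S_d$ is finitely generated and virtually abelian. Since any subgroup of a finitely generated virtually abelian group is itself finitely generated and virtually abelian, $P \coloneq \Phi(G) \leqs H \wr S_d$ has the required structure, and the induced epimorphism $\varphi\colon G \twoheadrightarrow P$ completes the argument. The only delicate ingredient, and the point to get right, is the opening reduction $A \cap K = H$; without it $\ker\Phi$ could meet $A$ nontrivially inside $N$.
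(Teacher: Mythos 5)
Your proof is correct. The opening reduction is exactly the paper's: both arguments invoke Lemma~\ref{lem:virt_retr->closed} and Lemma~\ref{lem:closed->sep} to replace the retracting subgroup by a finite-index subgroup $K$ with $A\cap K=H$, so that $N=\ker\rho$ meets $A$ trivially; and you correctly identify this as the one step that cannot be skipped. Where you diverge is the induction step from $K$ to $G$. The paper stays inside $G$: it picks a finite-index normal subgroup $M\lhd G$ contained in $K$, sets $N_1=N\cap M$ and $N_2=\bigcap_i g_iN_1g_i^{-1}\lhd G$, shows $M/N_2$ is finitely generated virtually abelian via the diagonal embedding into $\times_i M/g_iN_1g_i^{-1}$, and takes $P=G/N_2$, with injectivity on $A$ coming from $N_2\subseteq N$. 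You instead use the classical Kaloujnine--Krasner (monomial) homomorphism $\Phi\colon G\to H\wr S_d$ induced by $\rho$, whose kernel is $\bigcap_i g_iNg_i^{-1}\subseteq N$, and take $P=\Phi(G)$, using that $H\wr S_d$ is finitely generated virtually abelian (as $S_d$ is finite and $H$ is finitely generated virtually abelian) and that subgroups of such groups inherit both properties. The two mechanisms are close in spirit -- both kernels are intersections of conjugates of $N$, and both rely on the same closure properties of finitely generated virtually abelian groups -- but your wreath-product packaging spares you the auxiliary normal subgroup $M$ and the explicit diagonal-embedding verification, at the cost of invoking (and checking the kernel of) the induced homomorphism, whereas the paper's construction is self-contained and hands you $P$ directly as a quotient $G/N_2$ by a normal subgroup of $G$.
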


\begin{proof} Let $K \leqslant G$ be a finite index subgroup retracting onto $H$. Then $K$ is residually finite, so we can apply
Lemmas \ref{lem:virt_retr->closed} and \ref{lem:closed->sep} to find a finite index subgroup $L \leqslant K$ such that $A\cap L=H$.
Clearly $L$ still retracts onto $H$, so there exists $N \lhd L$ such that $L=HN$ and $H \cap N=\{1\}$. Since $|G:L| =|G:K|\,|K:L|<\infty$,
there is a finite index normal subgroup $M \lhd G$, contained in $L$. Denote $N_1=N \cap M \lhd M$, and observe that $M/N_1$ is finitely generated and virtually abelian as
it is isomorphic to $MN/N$ which has finite index in $L/N \cong H$.

Let $G=\bigsqcup_{i=1}^n g_i M$, where $g_1,\dots, g_n \in G$, $g_1=1$, and $N_2=\bigcap_{i=1}^n g_i N_1 g_i^{-1}$. Then $N_2 \lhd G$ and $|G/N_2:M/N_2|=|G:M|<\infty$.
Since $M \lhd G$ and $N_1\lhd M$, the conjugate  $g_iN_1g_i^{-1}$ is a normal subgroup of $M$, for each $i=1,\dots,n$. Therefore
$M/N_2$ can be naturally (``diagonally'') embedded in the direct product $\times_{i=1}^n M/g_i N_1 g_i^{-1}$. Note that
for every $i \in \{1,\dots, n\}$ the group $M/g_i N_1 g_i^{-1}=g_iMg_i^{-1}/g_i N_1 g_i^{-1} \cong M/N_1$ is finitely generated and virtually abelian,
therefore $\displaystyle \times_{i=1}^n M/g_i N_1 g_i^{-1}$ is also finitely generated and virtually abelian. Since any subgroup of finitely generated virtually abelian group is
again finitely generated and virtually abelian, both $M/N_2$ and its finite index overgroup $P=G/N_2$ are finitely generated and virtually abelian.

It remains to show that the natural epimorphism $\varphi:G \to P$ is injective on $A$, in other words, that $N_2 \cap A=\{1\}$. But this is true by construction, because $N_2\subseteq L$ and $N_2 \subseteq N_1 \subseteq  N$, so $N_2 \cap A \subseteq N \cap (L \cap A) = N \cap H=\{1\}$.
\end{proof}

The next statement is essentially a consequence of Maschke's Theorem from Representation Theory (see \cite[Theorem 10.8]{Curtis-Reiner}).

\begin{lemma} \label{lem:virt_ab_normal_retr} Let $L$ be a finitely generated virtually abelian group and let $S \lhd L$ be a normal subgroup. Then
there is a torsion-free normal subgroup $R \lhd L$ such that $|L:SR|<\infty$ and $S \cap R$ is trivial.
\end{lemma}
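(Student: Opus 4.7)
The plan is to reduce to a question about $\Q$-representations of a finite group, and then apply Maschke's theorem.

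First, I would find a torsion-free finite-index normal abelian subgroup $A \lhd L$. Starting from any finite-index normal abelian subgroup $A_0 \lhd L$ (which exists since $L$ is finitely generated virtually abelian), write $A_0 \cong \Z^n \oplus T$ with $T$ finite, and take $A = mA_0$ where $m = |T|$. Then $A$ is torsion-free abelian, characteristic in $A_0$, hence normal in $L$, and of finite index. Let $V = A \otimes_\Z \Q$. Since $A$ is abelian, the conjugation action of $L$ on $A$ descends to a linear action of the finite group $Q = L/A$ on $V$.

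Next, I would set up the Maschke step. The subgroup $S \cap A$ is normal in $L$ (as an intersection of two normal subgroups), so $W := (S\cap A)\otimes\Q$ is a $Q$-invariant subspace of $V$. By Maschke's theorem applied to the finite group $Q$ over the characteristic zero field $\Q$, there exists a $Q$-invariant complement $W' \subseteq V$ with $V = W \oplus W'$. I then define
\[
R := A \cap W' \leqslant A.
\]
Since $R \leqslant A$, it is torsion-free abelian. Since both $A$ and $W'$ are $L$-invariant, $R \lhd L$.

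The remaining step is to verify the two defining conditions. For $S \cap R = \{1\}$: as $R \subseteq A$, we have $S \cap R = (S\cap A)\cap R$, and under the inclusion $A \hookrightarrow V$ this sits inside $W \cap W' = \{0\}$, hence is trivial. For $|L : SR| < \infty$: since $|L:A| < \infty$, it suffices to show $A/((S\cap A)R)$ is finite, i.e., $(S\cap A) + R$ has full rank in $A$. This uses the standard fact that if $A$ is a lattice in $V$ and $U \leqslant V$ is a subspace, then $A \cap U$ is a lattice in $U$ (one scales a $\Q$-basis of $U$ into $A$). Applied to $U = W'$, this gives $R \otimes \Q = W'$, so $((S\cap A) + R)\otimes \Q = W + W' = V$, proving that $(S\cap A) + R$ has finite index in $A$, whence $SR$ has finite index in $L$.

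I do not expect any serious obstacle: the main ingredients — passing to a torsion-free abelian normal subgroup of finite index, Maschke's theorem over $\Q$, and the lattice/subspace intersection fact — are all standard. The only place to be careful is ensuring that the torsion-free $A$ is actually normal in $L$ (handled by using the characteristic subgroup $mA_0$) and that $R$ spans $W'$ rationally, which is where the lattice argument is needed.
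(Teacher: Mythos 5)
Your proof is correct and takes essentially the same route as the paper: pass to a torsion-free abelian normal subgroup of finite index, apply Maschke's theorem over $\Q$ to the vector space it spans to obtain an invariant complement $W'$, and set $R = A \cap W'$. The only cosmetic differences are your explicit construction of the torsion-free normal subgroup as $mA_0$ (the paper simply takes a free abelian normal subgroup of finite index) and your lattice-spanning argument for $|L:SR|<\infty$, where the paper clears denominators elementwise to show $F/(T+R)$ is finitely generated torsion.
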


\begin{proof} Let $F \lhd L$ be a free abelian normal subgroup of finite index in $L$, thus $F \cong \Z^n$ for some $n \in \N \cup \{0\}$.
Then $T=S \cap F$ is normal in $L$, so the natural action of $L$ on $F$ by conjugation gives a homomorphism $\psi:L \to Aut(F) \cong GL_n(\Z)$, with finite image $X=\psi(L)$ (since $|L:F|<\infty$ and $F$ is abelian). Thus $F$ can be regarded as an $X$-module,
with $T$ -- an $X$-submodule of $F$.

Now, $U=F \otimes \Q$ is a vector space over $\Q$, equipped with the induced action of $X$ and containing an $X$-invariant subspace $V=T \otimes \Q$. By Maschke's Theorem,
$V$ has an $X$-invariant complement $W$, so that $U=V + W$ and $V \cap W$ is trivial. Note that $F$ is an $X$-invariant lattice in $U$, so $R=W \cap F$ is also $X$-invariant.
Consequently, $R \leqslant F$ (so $R$ is torsion-free), $R$ is normal in $L$ and the intersection $T \cap R$ is trivial.
It follows that $S \cap R$ is trivial.

It remains to show that $|P:SR|<\infty$. By the construction, for every $f \in F$, $f=v+w$, where $v\in V$ and $w \in W$.
Obviously there exists a positive integer $m \in \N$ such that $m v \in F$ and $m w \in F$
($m$ is the common denominator for all the rational numbers `involved' in $v$ and $w$), hence $mv \in T$ and $mw \in R$. Thus $mf \in T+R$. Since such $m$ exists for each
$f \in F$, we deduce that $F/(T+R)$ is a torsion abelian group. But this group is also finitely generated (as $F$ is finitely generated), hence $|F/(T+R)|<\infty$.
Since  $|L:F|<\infty$ and $T+R =TR \leqslant SR$, we can conclude that $|L:SR|<\infty$, so the lemma is proved.
\end{proof}

\begin{cor} \label{cor:retr_in_virt_ab_gps} If $P$ is a finitely generated virtually abelian group then every subgroup $Q \leqslant P$ is a virtual retract.
\end{cor}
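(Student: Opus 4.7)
The goal is to show $Q \vr P$, and I plan to do this by producing a subgroup $R$ normalized by $Q$ with $Q \cap R = \{1\}$ and $|P : QR| < \infty$, then invoking Remark \ref{rem:equiv_virt_retr}. The source of such an $R$ will be Lemma \ref{lem:virt_ab_normal_retr}, but that lemma requires a \emph{normal} subgroup as input, whereas $Q$ itself need not be normal in $P$. The fix is to pass to a finite-index subgroup of $P$ in which a suitable normal approximation of $Q$ sits.

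First I would pick a free abelian normal subgroup $F \lhd P$ of finite index and set $L := QF$. Then $L$ is a finite-index subgroup of $P$ containing $Q$, and $L$ is itself finitely generated and virtually abelian. Set $Q_0 := Q \cap F$. Since $F$ is abelian it centralizes $Q_0$, and $Q$ trivially normalizes $Q_0$, so $Q_0 \lhd L$. Applying Lemma \ref{lem:virt_ab_normal_retr} to $L$ with $S = Q_0$ yields a torsion-free normal subgroup $R \lhd L$ with $Q_0 \cap R = \{1\}$ and $|L : Q_0 R| < \infty$.

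The main obstacle is that the lemma only provides $Q_0 \cap R = \{1\}$, while what I need is $Q \cap R = \{1\}$. I would remove this obstacle by replacing $R$ with $R \cap F$: this subgroup is still normal in $L$ (intersection of two normal subgroups of $L$), torsion-free, and has finite index in $R$ because $|R : R \cap F| \leqs |L:F| < \infty$. Crucially, the new $R$ now lies inside $F$, so $Q \cap R \leqs Q \cap F = Q_0$, forcing $Q \cap R \leqs Q_0 \cap R = \{1\}$. A routine index estimate then shows that $|L : Q_0 R|$ remains finite after the replacement (it grows by at most a factor of $|L:F|$).

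Finally, since $Q_0 \leqs Q$, one has $Q_0 R \leqs QR$, so $|L : QR| \leqs |L : Q_0 R| < \infty$, and combined with $|P:L|<\infty$ this gives $|P : QR| < \infty$. As $Q \leqs L$ normalizes $R$ and intersects it trivially, Remark \ref{rem:equiv_virt_retr} then yields $Q \vr P$, completing the argument.
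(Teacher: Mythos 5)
Your proof is correct and follows essentially the same route as the paper: form $L=QF$ with $F$ an abelian normal subgroup of finite index in $P$, apply Lemma \ref{lem:virt_ab_normal_retr} to $S=Q\cap F\lhd L$, and conclude via Remark \ref{rem:equiv_virt_retr}. The only (harmless) difference is the last step: instead of replacing $R$ by $R\cap F$, the paper observes that $Q\cap R$ is finite (since $|Q:Q\cap F|<\infty$ and $(Q\cap F)\cap R=\{1\}$) and hence trivial because $R$ is torsion-free.
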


\begin{proof} By the assumptions, $P$ contains a normal abelian subgroup $F \lhd P$ of finite index.
Then $L=Q F$ is a finite index subgroup of $P$ and $S=Q \cap F$ is normal in $L$. The group $L$ is again finitely generated and virtually abelian, so we can
apply Lemma \ref{lem:virt_ab_normal_retr} to find a torsion-free normal subgroup
$R \lhd L$ which has trivial intersection with $S$ and satisfies $|L:SR|<\infty$. Since $Q \subseteq L$, we see that $R$ is normalized by $Q$.
Observe, also, that
\[|P:QR|\le |P:L|\,|L:QR| \le |P:L|\,|L:SR|<\infty.\]
Finally, $|Q \cap R|<\infty$ as $S$ has finite index in $Q$,
which implies that  $Q\cap R$ is trivial because
$R$ is torsion-free. Hence we have shown that $Q \vr P$, as required.
\end{proof}

We can now prove Theorem \ref{thm:virt_ab-lift}.
\begin{proof}[Proof of Theorem \ref{thm:virt_ab-lift}] By Lemma \ref{lem:retr_onto_virt_ab->homom}, there is an epimorphism $\varphi:G \to P$, where $P$ is
a finitely generated virtually abelian group and
the restriction of $\varphi$  to $A$ is injective.
So we can use Corollary~\ref{cor:retr_in_virt_ab_gps} to deduce that
$\varphi(A) \vr P$. Finally, we can conclude that $A \vr G$ by Lemma \ref{lem:basic_props_vr}.(ii).
\end{proof}

\begin{rem} Examples \ref{ex:rf_neces} and \ref{ex:wreath} show that in Theorem \ref{thm:virt_ab-lift} the assumptions that $G$ is residually finite and $H$ is virtually abelian are indeed necessary. Finally, in view of Exercise~\ref{exer:ab_wreath}, one can use the free abelian group of infinite rank as $X$ in Example \ref{ex:wreath} to see that the finite generation of $H$ is also important.
\end{rem}

As the reader will notice, finitely generated virtually abelian groups play a prominent role in this paper. One explanation for this is that
Corollary \ref{cor:retr_in_virt_ab_gps} has a converse, at least in the case of groups with finite virtual cohomological dimension.

\begin{prop}\label{prop:strong_VR->virt_ab} Let $G$ be a virtually torsion-free group with ${\rm vcd}(G)<\infty$. If every subgroup of $G$ is a virtual retract then
$G$ is finitely generated and virtually abelian.
\end{prop}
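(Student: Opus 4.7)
Since $G$ is virtually torsion-free with $\mathrm{vcd}(G)=n<\infty$, I first pass to a torsion-free finite-index subgroup $G_0\leqslant G$ with $\cd(G_0)=n$. Lemma~\ref{lem:basic_props_vr}(i) transports the hypothesis downward: every subgroup of $G_0$ is a virtual retract of $G_0$, and establishing the conclusion for $G_0$ delivers it for $G$. My overall strategy is then to show that every abelian subgroup of $G_0$ is finitely generated of rank at most $n$, and to induct on $n=\cd(G_0)$.

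Next, I would show that every abelian subgroup $A\leqslant G_0$ is finitely generated. Since $A$ is torsion-free, its rank $r$ is well defined, and $r=\cd(\Z^r)\leqslant\cd(A)\leqslant\cd(G_0)=n$. I pick a finitely generated $B\leqslant A$ of rank $r$ (e.g.\ generated by a $\Q$-basis of $A\otimes\Q$ lifted to $A$). By hypothesis and Lemma~\ref{lem:basic_props_vr}(i), $B\vr A$, producing a finite-index $K\leqslant A$ with $K=B\oplus N$ for some $N\leqslant A$. As $N$ is torsion-free abelian with $N\cap B=\{1\}$ and $\rk(K)=r$, I obtain $\rk(N)=0$, hence $N=\{1\}$. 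Thus $B=K$ has finite index in $A$, so $A$ is finitely generated.

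To finish, I would induct on $n=\cd(G_0)$. The base case $n=0$ gives $G_0=\{1\}$. In the inductive step, pick an infinite-order $g\in G_0$; since $\langle g\rangle\vr G_0$, some finite-index subgroup $K\leqslant G_0$ splits as $K=N\rtimes\langle g\rangle$, and $N$ inherits ``every subgroup is a virtual retract'' via Lemma~\ref{lem:basic_props_vr}(i). When $\cd(N)=n-1$, the inductive hypothesis gives $N$ finitely generated and virtually abelian, whence $K$ (and so $G_0$) is finitely generated. A final step shows that the conjugation action of $\langle g\rangle$ on a finite-index free abelian subgroup of $N$ must be of finite order---otherwise one locates a cyclic subgroup of $K$ failing to be a virtual retract, by the same cohomological obstruction powering the non-retractability in Proposition~\ref{prop:wreath_ex} (any homomorphism from an Anosov- or Heisenberg-type extension to a cyclic group kills a finite-index piece of the commutator subgroup)---forcing $G_0$ to be virtually abelian.

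The principal obstacle is engineering the cohomological-dimension drop $\cd(N)=n-1$, since a priori $\cd(N)\in\{n-1,n\}$. To guarantee the drop, I would choose $g$ to be central in a finite-index subgroup of $G_0$, so that $K=N\times\langle g\rangle$ is a direct product and $\cd(K)=\cd(N)+1$; the existence of such a central $g$ should follow by iteratively applying the hypothesis to a finitely generated abelian subgroup of $G_0$ of maximal rank and exploiting the residual finiteness furnished by Lemma~\ref{lem:vrc->rf}. If this ``centralizing'' step cannot be done globally, one should still be able to implement it after passing to a further finite-index subgroup, using the freedom in the choice of retraction and the separability of fg virtually abelian subgroups established in Section~\ref{sec:retr_on_vab}.
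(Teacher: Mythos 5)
Your reduction to a torsion-free finite-index subgroup, the base case, and the first step of the induction (apply the hypothesis to an infinite cyclic subgroup to get $K=S\rtimes\langle g\rangle$ of finite index) all match the paper, and your side argument that abelian subgroups are finitely generated is fine but not where the difficulty lies. The genuine gap is exactly the step you flag as "the principal obstacle": you never establish the cohomological-dimension drop. Your proposed mechanism --- choosing $g$ central in a finite-index subgroup so that $K=N\times\langle g\rangle$ --- is unsupported: a priori the centre of every finite-index subgroup of $G_0$ could be trivial, and the existence of such a central infinite-order element is essentially equivalent to the statement being proved, so "should follow by iteratively applying the hypothesis to a maximal-rank abelian subgroup" is a hope, not an argument. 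Moreover the drop can genuinely fail for a bare semidirect product with $\Z$ (e.g.\ $\Z[1/2]\rtimes\Z$ has $\cd(\Z[1/2])=\cd(\Z[1/2]\rtimes\Z)=2$), so some extra input is unavoidable. Your closing step (the $\langle g\rangle$-action on $N$ has finite order) is also only gestured at via Proposition~\ref{prop:wreath_ex}, which is not the relevant statement; it could be repaired by quoting Proposition~\ref{prop:vpolyc+vrc->vab}, but as written it is not a proof.

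The paper's fix, which you miss, is to apply the hypothesis a \emph{second} time, not to a new cyclic subgroup but to the complement $S=\ker$ of the retraction $K\to C$: since $S\leqslant G$ is itself a virtual retract, one gets a finite-index $L\leqslant K$ with $S\subseteq L$, $T\lhd L$, $S\cap T=\{1\}$ and $L=ST$. Because $S\lhd K$ (hence $S\lhd L$), both factors are normal, so $L\cong S\times T$, and $T\cong L/S$ is a finite-index subgroup of $K/S\cong C$, hence $T\cong\Z$. This yields $L\cong S\times\Z$ \emph{unconditionally}, so $\cd(S)=\cd(L)-1<\cd(A)$ by the product formula, the induction applies to $S$, and the direct-product structure makes your "finite-order action" step vanish entirely: once $S$ is finitely generated and virtually abelian, so is $L\cong S\times\Z$, hence so is $A$. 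In short, the skeleton of your induction is right, but the two load-bearing steps are missing, and the double application of the virtual-retract hypothesis is the idea needed to supply them.
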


\begin{proof} Let $A \leqslant G$ be a torsion-free subgroup of finite index, then the cohomological dimension $\cd(A)$ is equal to ${\rm vcd}(G)<\infty$,
and every subgroup of $A$ is still a virtual retract by Lemma~\ref{lem:basic_props_vr}.(i). Thus it is enough to show that $A$ is finitely generated and virtually abelian.
The argument will proceed by induction on $\cd(A)$. If $\cd(A)=0$ then $A$ is the trivial group %(cf. \cite[Chapter VIII.2, Exercise 1]{Brown}),
and so the statement holds. Hence we can
further suppose that $A$ is non-trivial.

Take an infinite cyclic subgroup $C \leqslant A$. By the assumptions, $C \vr A$, so there exist a finite index subgroup $K \leqslant A$ and
$S \lhd K$ such that $C \subseteq K$, $C \cap S=\{1\}$ and $K=C S$.
Now, $S \vr G$, so  $S \vr K$, and one can find a finite index subgroup $L \leqslant K$ and $T \lhd L$ satisfying $S \subseteq L$, $S \cap T=\{1\}$ and $L=ST$.
Note that $S \lhd L$, so $L=ST \cong S \times T$ and $T \cong L/S$. Since $L/S$ has finite index in $K/S \cong C$, we deduce that $T\cong \Z$.

Therefore $L \cong \Z \times S$, and since $\cd(L)=\cd(A)<\infty$ (as $|A:L|<\infty$, see \cite[Chapter~VIII.2, Proposition~2.4.(a)]{Brown}),
we have $\cd(S)=\cd(L)-\cd(\Z)<\cd(A)$ (cf. \cite[Chapter~II, Theorem~5.5]{Bieri}).
Lemma~\ref{lem:basic_props_vr}.(i) tells us that every subgroup of $S$ is a virtual retract, as $S \leqslant A$, so $S$ must be finitely generated and virtually abelian by induction. It follows that $L$, and, hence, $A$ are finitely generated and virtually abelian. Thus the proposition is proved.
\end{proof}

\section{On properties \VR{} and \VRC{}.}\label{sec:VR_and_VRC}
In this section we will establish some basic facts about properties \VR{} and \VRC{} from Definition \ref{def:VR_and_VRC}.
Let us start with proving Proposition~\ref{prop:VRC->VRAb} stated in the Introduction, as the first interesting application of Theorem \ref{thm:virt_ab-lift}.

\begin{proof}[Proof of Proposition \ref{prop:VRC->VRAb}] Note that $G$ is residually finite by Lemma \ref{lem:vrc->rf}.
Since every finitely generated virtually abelian subgroup has a finite index subgroup isomorphic to $\Z^n$, for some $n \in \N\cup \{0\}$, by
Theorem~\ref{thm:virt_ab-lift} it is enough to show that $A \vr G$ for every free abelian subgroup $A$ of rank $n$. We will prove the latter statement by
induction on $n$. If $n \le 1$, it follows from property \VRC{}, so suppose that $A \cong \Z^n$ with $n \ge 2$.

Choose any subgroup $B \leqslant A$, with $B \cong \Z^{n-1}$. Then $B \vr G$ by the induction hypothesis, so there exist a finite index subgroup $K$ of $G$ and $N \lhd K$
such that $B \subseteq K$, $B \cap N$ is trivial and $K=BN$. Note that $|A:(A \cap K)|<\infty$ , so $A \cap K \cong \Z^n$, hence $A \cap N$ must be an
infinite cyclic group, generated by some element $a \in A \cap N$. Evidently $|A:B\langle a\rangle|<\infty$.

By the assumptions, $\langle a \rangle \vr G$, hence $\langle a \rangle \vr K$ by Lemma \ref{lem:basic_props_vr}.(i).
Therefore there exists a finite index subgroup $L \leqslant K$
which contains $a$ and admits a homomorphism $\xi: L \to C$, where $C$ is an infinite cyclic group generated by $\xi(a)$. Denote $B_1=B \cap L$,
then $|B:B_1|<\infty$, so $B_1 \cong \Z^{n-1}$ and $|A:B_1\langle a \rangle|<\infty$.
Let $\rho:L \to B$ be the restriction to $L$ of the retraction of $K$ onto $B$; then $a \in \ker\rho=L \cap N$ and
$\rho$ is injective on $B_1$.

Let $\psi:L \to C \times B$ be the homomorphism defined by $\psi(g)=(\xi(g),\rho(g))$ for all $g \in L$.
Observe that $\psi(\langle a \rangle) \subseteq C \times \{1\}$, as $\rho(a)=1$. On the other hand,
$\psi(B_1)$ intersects  $C \times \{1\}$ trivially in $C \times B$, because the composition of $\psi$ with the projection of $C \times B$ onto $B$ is the homomorphism
$\rho$, which is injective on $B_1$. Therefore $\psi(B_1) \cap \langle \psi(a) \rangle$ is trivial in $C \times B$.

By construction, $\psi$ is injective on $B_1$ and on $\langle a \rangle$, and the images of these two subgroups have trivial intersection. Therefore $\psi$ is injective
on the product $B_1\langle a\rangle$. Also, one can note that $C \times B \cong \Z^n$ and $B_1\langle a\rangle \cong B_1\times \langle a \rangle \cong \Z^n$, hence
$\psi(B_1\langle a \rangle)$ must have finite index in $C \times B$. It follows that for $S=\ker \psi \lhd L$ we have $S \cap B_1\langle a \rangle=\{1\}$ and
$|L:B_1\langle a \rangle S|<\infty$. Consequently, $B_1\langle a \rangle \vr L$, hence $B_1\langle a \rangle \vr G$ as $|G:L|<\infty$. Since $G$ is residually finite
and $|A:B_1\langle a \rangle|<\infty$ we can apply Theorem \ref{thm:virt_ab-lift} to conclude that $A \vr G$, as required.
\end{proof}

\begin{lemma}\label{lem:vr-obs} Let (P) be one of the properties \VR{} or \VRC{}.
\begin{itemize}
  \item[(i)] Suppose that $G$ is a group satisfying (P). Then $G$ is residually finite and every subgroup $A \leqslant G$ also satisfies (P).
  \item[(ii)] If $G$ has \VRC{} then every finitely generated virtually abelian subgroup is closed in the profinite topology on $G$.
  \item[(iii)] If $G$ has \VR{} then $G$ is LERF.
\end{itemize}
\end{lemma}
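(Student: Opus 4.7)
The plan is to handle the three parts in the natural order, reusing the auxiliary material already established in the paper so that each part becomes essentially a one-line deduction.

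For part (i), I would first note that property \VR{} is a priori stronger than \VRC{}, so in either case $G$ satisfies \VRC{}. Residual finiteness of $G$ then follows immediately from Lemma~\ref{lem:vrc->rf}. For the inheritance by subgroups, let $A \leqslant G$ and let $H \leqslant A$ be either cyclic (in the \VRC{} case) or finitely generated (in the \VR{} case). Then $H$ is a subgroup of $G$ of the appropriate type, so the hypothesis gives $H \vr G$, and then Lemma~\ref{lem:basic_props_vr}.(i) promotes this to $H \vr A$ since $H \subseteq A \subseteq G$. This shows that $A$ satisfies (P).

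For part (ii), let $H \leqslant G$ be a finitely generated virtually abelian subgroup. Proposition~\ref{prop:VRC->VRAb} gives $H \vr G$. Combined with residual finiteness of $G$ (from part (i), or directly from Lemma~\ref{lem:vrc->rf}), Lemma~\ref{lem:virt_retr->closed} then tells us $H$ is closed in the profinite topology on $G$.

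For part (iii), assume $G$ has \VR{} and let $H \leqslant G$ be any finitely generated subgroup. Then $H \vr G$ by hypothesis, $G$ is residually finite by part (i), and so Lemma~\ref{lem:virt_retr->closed} yields that $H$ is closed in the profinite topology on $G$. Since this holds for every finitely generated $H$, the group $G$ is LERF by definition. There is no real obstacle here; the work has already been done in Lemmas~\ref{lem:virt_retr->closed}, \ref{lem:vrc->rf}, \ref{lem:basic_props_vr}, and Proposition~\ref{prop:VRC->VRAb}, and the lemma is simply a convenient packaging of these consequences.
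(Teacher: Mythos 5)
Your proposal is correct and follows essentially the same route as the paper: part (i) from Lemma~\ref{lem:vrc->rf} together with Lemma~\ref{lem:basic_props_vr}.(i), and parts (ii) and (iii) from residual finiteness, Proposition~\ref{prop:VRC->VRAb} (for (ii)), and Lemma~\ref{lem:virt_retr->closed}. The only cosmetic difference is that for (iii) you bypass Proposition~\ref{prop:VRC->VRAb} and apply \VR{} directly to finitely generated subgroups, which is a perfectly valid (and slightly more direct) phrasing of the same argument.
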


\begin{proof} Claim (i) follows from Lemmas \ref{lem:vrc->rf} and \ref{lem:basic_props_vr}.(i).
Claims (ii) and (iii) follow from (i) and a combination of Proposition \ref{prop:VRC->VRAb} with Lemma~\ref{lem:virt_retr->closed}.
\end{proof}

\begin{lemma}\label{lem:vrab-basics}
{\rm (a)} If $K$ is a finite index subgroup of a group $G$ and $K$ has \VRC{} then $G$ also has \VRC{}.

{\rm (b)} The (finitary) direct product $\times_{i \in I} X_i$ of any family of groups $X_i$ with \VRC{} satisfies \VRC{}.
\end{lemma}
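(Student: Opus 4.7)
For part (a), the plan is to reduce to the situation where $K$ is normal in $G$ and then use Theorem~\ref{thm:virt_ab-lift} to promote the virtual retraction across the finite index extension. Given $g \in G$, after replacing $K$ with its normal core in $G$ (which still has \VRC{} by Lemma~\ref{lem:vr-obs}.(i)), I may assume $K \lhd G$, so $n \coloneq |G:K|<\infty$ and $g^n \in K$. Since $K$ has \VRC{}, the cyclic subgroup $\langle g^n \rangle$ is a virtual retract of $K$; as any finite index subgroup of $K$ realising the retraction is also of finite index in $G$, I obtain $\langle g^n\rangle \vr G$. Now $\langle g \rangle$ contains $\langle g^n \rangle$ with index at most $n<\infty$, the subgroup $\langle g^n \rangle$ is finitely generated virtually abelian (in fact cyclic), and $G$ is residually finite since it contains the residually finite subgroup $K$ of finite index. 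Theorem~\ref{thm:virt_ab-lift} then yields $\langle g \rangle \vr G$, as required.

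For part (b), write $G = \times_{i \in I} X_i$ and let $g = (g_i)_{i \in I} \in G$ be arbitrary; since the product is finitary, its support $S = \{i \in I : g_i \neq 1\}$ is finite. My plan is to establish \VRC{} for the finite direct product $X_S = \times_{i \in S} X_i$ first and then extend to the whole product using the natural coordinate splitting $G = X_S \times X_{I \setminus S}$. By iterating Lemma~\ref{lem:basic_props_vr}.(v) across the finite set $S$ and using \VRC{} of each factor, the subgroup $A = \prod_{i \in S} \langle g_i \rangle$ is a virtual retract of $X_S$. Since $A$ is a finitely generated abelian group containing $\langle g \rangle$, Corollary~\ref{cor:retr_in_virt_ab_gps} yields $\langle g \rangle \vr A$, and transitivity (Lemma~\ref{lem:basic_props_vr}.(iv)) upgrades this to $\langle g \rangle \vr X_S$. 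Finally, if $L \leqslant X_S$ is a finite index subgroup retracting onto $\langle g \rangle$ via some $\rho$, then $L \times X_{I \setminus S}$ has index $|X_S:L|<\infty$ in $G$, and composing $\rho$ with the projection onto the $X_S$-coordinate provides a retraction onto $\langle g \rangle$, giving $\langle g \rangle \vr G$.

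The crux of (a) is the fact that $\langle g \rangle$ need not itself lie inside $K$, so one cannot directly restrict a retraction from $K$; Theorem~\ref{thm:virt_ab-lift} is tailor-made for bridging this gap, and so the argument is essentially automatic once the normal core reduction is carried out. For (b), the only delicate point is that the cyclic subgroup $\langle (g_i)_{i \in S} \rangle$ is typically a proper subgroup of the direct product $\prod_{i \in S} \langle g_i \rangle$ and need not be a retract of it, which forces the detour through the finitely generated abelian subgroup $A$ and Corollary~\ref{cor:retr_in_virt_ab_gps}.
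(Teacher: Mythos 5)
Your proof of part (a) is correct and follows essentially the paper's own route: both arguments hinge on Theorem~\ref{thm:virt_ab-lift} applied to a cyclic subgroup of $K$ sitting with finite index inside the given cyclic subgroup of $G$ (the paper takes $H=\langle g\rangle\cap K$ directly, which makes your normal-core reduction and the power $g^n$ unnecessary, but the idea is identical), together with the observation that $G$ is residually finite because $K$ is.

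Part (b) is also correct, but your route differs from the paper's. The paper splits into cases according to the order of the generator $a$: if $a$ has finite order, it invokes residual finiteness of the finitary product together with Lemma~\ref{lem:retract_fin-ext} (finite subgroups of residually finite groups are virtual retracts); if $a$ has infinite order, it finds a single coordinate projection $\rho_j$ that is injective on $\langle a\rangle$ and applies Lemma~\ref{lem:basic_props_vr}.(ii). You instead avoid any case split: using the finite support $S$ of $g$, you iterate Lemma~\ref{lem:basic_props_vr}.(v) to get $\times_{i\in S}\langle g_i\rangle \vr X_S$, descend to $\langle g\rangle$ via Corollary~\ref{cor:retr_in_virt_ab_gps} and transitivity (Lemma~\ref{lem:basic_props_vr}.(iv)), and then pass from $X_S$ to the whole product by the coordinate splitting (equivalently, one could quote Lemma~\ref{lem:basic_props_vr}.(ii) for the projection onto $X_S$). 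Your argument is uniform in the order of $g$ and nicely illustrates the use of the Maschke-based Corollary~\ref{cor:retr_in_virt_ab_gps}, at the cost of invoking that heavier result; the paper's argument is more elementary, needing only the projection trick and the standard fact that a finitary product of residually finite groups is residually finite. All results you cite occur earlier in the paper, so there is no circularity.
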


\begin{proof} Claim (a) is a consequence of Theorem \ref{thm:virt_ab-lift}. Indeed, suppose that
$A \leqslant G$ is a cyclic subgroup and set $H=A \cap K$. Then $|A:H|<\infty$ and $H$ is also cyclic, hence $H \vr K$, so $H \vr G$.
Therefore $A\vr G$ by Theorem \ref{thm:virt_ab-lift} ($G$ is residually finite because $K$ is, see Lemma~\ref{lem:vr-obs}.(i)).

Claim (b) was proved in \cite[Theorem 2.13]{L-R} when $|I|=2$, and our argument is a simple generalization of this proof.
Each group $X_i$, $i \in I$, is residually finite by Lemma \ref{lem:vr-obs}.(i), hence the same is true for their direct product
$P=\times_{i \in I} X_i$.
Suppose that $\langle a \rangle \leqslant P$ is a cyclic subgroup. If $a$ has finite order then $\langle a \rangle \vr P$ by
Lemma~\ref{lem:retract_fin-ext}. On the other hand, if $a$ has infinite order then there must exist some $j \in I$ such that $\rho_j(a)$ has infinite order in $X_j$,
where $\rho_j:P \to X_j$ denotes the canonical projection (this is because $P$ is the \emph{finitary} direct product, so that $a$ has only finitely many
non-trivial projections to $X_i$, $i \in I$). It follows that $\rho_j$ is injective on $\langle a \rangle$. Since $\rho_j(\langle a \rangle) \vr X_j$ by the assumptions,
$\langle a \rangle \vr P$ by Lemma \ref{lem:basic_props_vr}.(ii). Thus we have proved claim (b).
\end{proof}

\begin{cor}\label{cor:v_free-VRC} Free groups have \VR{} and virtually free groups have \VRC.
\end{cor}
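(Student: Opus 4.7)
My plan is to split the corollary into its two assertions and dispatch each by combining previously stated facts.

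For the first assertion, that free groups have property \VR{}, I would invoke M. Hall's theorem, already recalled in the Introduction: in a free group $F$, every finitely generated subgroup $H$ is a free factor of some finite index subgroup $K \leqs F$. Being a free factor immediately gives a retraction of $K$ onto $H$ (the canonical projection that kills the complementary free factor). Hence $H$ is a retract of a finite index subgroup of $F$, i.e., $H \vr F$. Since $H$ was an arbitrary finitely generated subgroup, $F$ has \VR{}. This step is routine; the only subtlety is ensuring that free groups of arbitrary (possibly infinite) rank are covered, but the argument depends only on $H$ being finitely generated and does not require any hypothesis on the rank of $F$.

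For the second assertion, that every virtually free group $G$ has \VRC{}, I would let $F \leqs G$ be a free subgroup of finite index. By the first part, $F$ has \VR{}; in particular every cyclic subgroup of $F$ is a virtual retract of $F$, so $F$ has \VRC{}. Then I would quote Lemma~\ref{lem:vrab-basics}.(a), which says that \VRC{} is inherited by finite index overgroups (this was proved using Theorem~\ref{thm:virt_ab-lift}), to conclude that $G$ itself has \VRC{}.

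There is no real obstacle here: the corollary is a direct consequence of M. Hall's property for free groups together with the commensurability invariance of \VRC{} established in Lemma~\ref{lem:vrab-basics}.(a). The only thing to keep in mind is that one \emph{cannot} upgrade the conclusion for virtually free groups from \VRC{} to \VR{}, since property \VR{} is not stable under passing to finite index overgroups (as noted in the Introduction and to be shown later via Proposition~\ref{prop:f_i_overgp-not_VR}); the gap between the two conclusions is therefore genuine and reflects the broader theme of the paper.
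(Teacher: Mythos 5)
Your proposal is correct and matches the paper's own argument: M. Hall's theorem gives that every finitely generated subgroup of a free group is a free factor (hence a retract) of a finite index subgroup, so free groups have \VR{}, and then Lemma~\ref{lem:vrab-basics}.(a) promotes \VRC{} from a finite index free subgroup to the virtually free group. The remark about infinite rank is also in line with the paper, which cites the version of M. Hall's theorem (cf.\ Burns) valid for free groups of arbitrary rank.
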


\begin{proof} Every finitely generated subgroup of a free group is a free factor of a finite index subgroup by M. Hall's theorem \cite{M-Hall} (cf. \cite[Corollary 1]{Burns1}), so free groups (of arbitrary rank) have \VR{}, and, hence, \VRC{}. Therefore virtually free groups have \VRC{} by Lemma~\ref{lem:vrab-basics}.(a).
\end{proof}

\begin{rem}\label{rem:claim_in_L-R}
Part (a) of Lemma \ref{lem:vrab-basics} was stated as Theorem 2.11 in \cite{L-R}. However, the proof of this theorem in \cite{L-R} refers to
\cite[Theorem 2.10]{L-R}, whose proof is invalid. More precisely, Theorem 2.10 of \cite{L-R} claims that if $G$ is a finitely generated linear group and $K\leqslant G$
is a finite index subgroup satisfying \VR{} then $G$ also satisfies \VR{}. We do not know if this theorem is true as stated (see Question~\ref{q:L-R} below);
Proposition~\ref{prop:f_i_overgp-not_VR} shows that property \VR{} is not, in general, preserved by passing to finite index overgroups, but our example is not linear over $\mathbb{C}$.

The problem with the proof of \cite[Theorem 2.10]{L-R} is as follows (we use the notation from \cite{L-R}).
After the non-faithful linear representation of $A$ is induced to a representation of $AH$, the image of $H$, with respect to the induced representation, will
usually have infinite index in the image of $AH$ (indeed, the kernel of the induced representation can be much smaller than the kernel of the original representation).
This would exactly be the issue if one tried to show that in the linear group $G=F_2 \wr \Z_2$ the centralizer of $\Z_2$ is a virtual retract, by using that the diagonal subgroup of $F_2 \times F_2$ is a virtual retract of $G$: see Example~\ref{ex:wreath} and Proposition~\ref{prop:wreath_ex}.
\end{rem}

\begin{rem}\label{rem:VR_dir_prod} (a) Property \VRC{} may not be preserved by infinite cartesian products. This can be seen, for instance, from the existence
of residually finite groups without \VRC{} (cf. Example~\ref{ex:Heisenberg} below) and the fact that every residually finite group embeds in a
cartesian product of finite groups (each of which certainly has \VRC{}).

(b) Property \VR{} does not behave well even under finite direct products.
Indeed, the free group $F_2$, of rank $2$, satisfies \VR{} by
Corollary~\ref{cor:v_free-VRC}, however it is well-known that the direct product $F_2 \times F_2$ is not LERF
(see, for instance, \cite[Example on p. 12]{All-Greg}), so it cannot satisfy \VR{} by Lemma \ref{lem:vr-obs}.(iii).
\end{rem}

The next proposition implies that \VR{} is at least stable under taking direct products with finitely generated virtually abelian groups.
\begin{prop}\label{prop:vr_times_virt_ab_is_vr} Suppose that $X$ is a finitely generated virtually abelian group and $Y$ is any group satisfying \VR{}.
Then $X \times Y$ also satisfies \VR{}.
\end{prop}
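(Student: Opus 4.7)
The plan is to combine the Goursat description of subgroups of a direct product with Lemma~\ref{lem:virt_ab_normal_retr} and Corollary~\ref{cor:retr_in_virt_ab_gps}.

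First, I would reduce to the case where $H$ surjects onto both factors of $G=X\times Y$. Setting $H_X=\pi_X(H)$ and $H_Y=\pi_Y(H)$, Corollary~\ref{cor:retr_in_virt_ab_gps} gives $H_X\vr X$ and property \VR{} of $Y$ gives $H_Y\vr Y$; Lemma~\ref{lem:basic_props_vr}.(v) then yields $H_X\times H_Y\vr G$, so by Lemma~\ref{lem:basic_props_vr}.(iv) it suffices to prove $H\vr H_X\times H_Y$. After relabelling, assume $H$ surjects onto each factor. Put $A=H\cap X$ and $B=H\cap Y$. Realising every $x\in X$ as $\pi_X(h)$ for some $h\in H$ and conjugating $A$ by $h$ shows $A\lhd X$, and similarly $B\lhd Y$. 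Goursat's lemma then yields a canonical isomorphism $\phi\colon X/A\xrightarrow{\sim}Y/B$; denote the common quotient by $D$ so that $H=\{(x,y)\in X\times Y:\phi(xA)=yB\}$.

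Next I would apply Lemma~\ref{lem:virt_ab_normal_retr} to the pair $(X,A)$ to obtain a torsion-free normal subgroup $R\lhd X$ with $A\cap R=\{1\}$ and $[X:AR]<\infty$. Set $N\coloneq R\times\{1\}\leqslant G$; this is normal in $G$ because $R\lhd X$. The Goursat description of $H$ immediately gives $H\cap N=\{1\}$: an element $(r,1)\in H$ forces $rA$ to be trivial in $D$, so $r\in A$, and thus $r\in A\cap R=\{1\}$. Since $H$ normalises $N$, the product $HN\leqslant G$ is an internal semidirect product $H\ltimes N$, and the map $hn\mapsto h$ is therefore a well-defined group homomorphism, hence a retraction $HN\to H$. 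This is the crux of the argument and the step I expect to require the most care: $H$ is in general not normal in $G$ (normality there would amount to $D$ being abelian), but the normality of $N$ together with $H\cap N=\{1\}$ is all that is needed to turn $hn\mapsto h$ into a homomorphism.

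Finally I would check $[G:HN]<\infty$. Let $\pi\colon G\to D\times D$, $(x,y)\mapsto(xA,yB)$; its kernel $A\times B$ is contained in $H\subseteq HN$, so $HN=\pi^{-1}(\pi(HN))$. The image $\pi(H)$ is the diagonal $\Delta_D$ and $\pi(N)=\bar R\times\{1\}$, where $\bar R\leqslant D$ denotes the image of $R$ under $X\to X/A$ (injective on $R$ because $R\cap A=\{1\}$). As $R\lhd X$ we have $\bar R\lhd D$, and $[D:\bar R]=[X:AR]<\infty$. Thus $\pi(HN)=\Delta_D\cdot(\bar R\times\{1\})=\{(e,d)\in D\times D:ed^{-1}\in\bar R\}$, which has index $[D:\bar R]$ in $D\times D$, so $[G:HN]=[D:\bar R]<\infty$ and $H\vr G$.
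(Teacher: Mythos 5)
Your proposal is correct and follows essentially the same route as the paper: after reducing to the projections $H_X\vr X$ (via Corollary~\ref{cor:retr_in_virt_ab_gps}) and $H_Y\vr Y$ and invoking Lemma~\ref{lem:basic_props_vr}.(iv),(v), you apply Lemma~\ref{lem:virt_ab_normal_retr} to $H\cap X\lhd X$ and check that $HN$ has finite index, which is exactly the paper's Lemma~\ref{lem:crit_for_vr_in_dir_prod}. The only difference is cosmetic: you verify the normality of $H\cap X$ and the finite-index claim by hand through the Goursat/fibre-product description, where the paper cites \cite[Lemma 2.1]{Min-subdir}.
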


The proposition easily follows from the following lemma.
\begin{lemma}\label{lem:crit_for_vr_in_dir_prod} Let $X$ be a finitely generated virtually abelian group and let $Y$ be an arbitrary group.
Suppose that $H \leqs X \times Y$ is a subgroup such that
$\rho_Y(H) \vr Y$, where $\rho_Y:X \times Y \to Y$ is the natural projection. Then $H \vr X\times Y$.
\end{lemma}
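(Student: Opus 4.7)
The plan is to prove the lemma via two successive reductions, followed by an application of Lemma~\ref{lem:virt_ab_normal_retr} (Maschke). Set $\widetilde{H}_X \coloneq \rho_X(H) \leqs X$ (where $\rho_X \colon X \times Y \to X$ is the natural projection) and $H_Y \coloneq \rho_Y(H) \leqs Y$. By Corollary~\ref{cor:retr_in_virt_ab_gps}, $\widetilde{H}_X \vr X$, so there exist a finite index subgroup $X_1 \leqs X$ containing $\widetilde{H}_X$ and a retraction $\tau \colon X_1 \to \widetilde{H}_X$. The hypothesis $H_Y \vr Y$ provides, analogously, a finite index $L \leqs Y$ containing $H_Y$ and a retraction $\eta \colon L \to H_Y$. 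The combined map $\Phi \coloneq (\tau, \eta) \colon X_1 \times L \to \widetilde{H}_X \times H_Y$ is a surjective homomorphism whose restriction to $H$ is the identity inclusion of $H$ into $G' \coloneq \widetilde{H}_X \times H_Y$ (since for $(x,y) \in H$ we have $x \in \widetilde{H}_X$, $y \in H_Y$, hence $\tau(x) = x$ and $\eta(y) = y$). By Lemma~\ref{lem:basic_props_vr}(ii) and (iv), this reduces the proof to showing $H \vr G'$.

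In $G'$ both projections of $H$ onto the factors are surjective by construction. Set $X_0 \coloneq H \cap (\widetilde{H}_X \times \{1\}) \leqs \widetilde{H}_X$. A quick conjugation calculation yields $X_0 \lhd \widetilde{H}_X$: for $(x, y) \in H$ and $(x_0, 1) \in H$ one has $(x, y)(x_0, 1)(x, y)^{-1} = (x x_0 x^{-1}, 1) \in H$, so $x x_0 x^{-1} \in X_0$, while $x$ ranges over all of $\widetilde{H}_X$. Since $\widetilde{H}_X$ is itself finitely generated and virtually abelian (being a subgroup of $X$), Lemma~\ref{lem:virt_ab_normal_retr} supplies a torsion-free normal subgroup $R \lhd \widetilde{H}_X$ with $X_0 \cap R = \{1\}$ and $|\widetilde{H}_X : X_0 R| < \infty$. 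I would then take $N \coloneq R \times \{1\} \leqs G'$ and verify the three conditions of Remark~\ref{rem:equiv_virt_retr}: (a) $N$ is normalized by $H$ because $R \lhd \widetilde{H}_X$; (b) $H \cap N = \{1\}$, since any $(x, 1) \in H$ satisfies $x \in X_0$, whence $x \in X_0 \cap R = \{1\}$; and (c) $|G' : HN| = |\widetilde{H}_X : X_0 R| < \infty$, via the equality $HN \cap (\widetilde{H}_X \times \{1\}) = X_0 R \times \{1\}$ together with the fact that $HN$ surjects onto $H_Y$.

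The main difficulty is identifying the correct first reduction. A one-sided approach that retracts only $Y$ onto $H_Y$ leaves $H$ inside $X \times H_Y$, where seeking a normal complement of the form $R \times \{1\}$ would require $|X : H_X R| < \infty$ for $H_X \coloneq H \cap (X \times \{1\})$; this condition can fail whenever $\widetilde{H}_X$ has infinite index in $X$, because $H_X R$ then cannot exhaust the cosets of $H_X$ in $X$. Reducing $X$ down to $\widetilde{H}_X$ in tandem via Corollary~\ref{cor:retr_in_virt_ab_gps} eliminates this obstruction, after which Maschke's theorem supplies the required complement inside $\widetilde{H}_X$.
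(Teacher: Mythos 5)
Your proof is correct and follows essentially the same route as the paper: reduce to the subgroup $\rho_X(H)\times\rho_Y(H)$ (the paper does this via Lemma~\ref{lem:basic_props_vr}.(v) and (iv), you via an explicit product retraction and Lemma~\ref{lem:basic_props_vr}.(ii) and (iv), which amounts to the same thing), then note that $H\cap(\rho_X(H)\times\{1\})$ is normal in $\rho_X(H)$ because $H$ projects onto it, apply Lemma~\ref{lem:virt_ab_normal_retr} to get the complement $R$, and finish with the same fibre-product index observation (which the paper cites from \cite{Min-subdir} and you verify directly).
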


\begin{proof} Let $\rho_X:X\times Y \to X$ denote the natural projection, set $L=\rho_X(H) \leqslant X$ and $M=\rho_Y(H) \leqslant Y$.
Then $M \vr Y$ by the assumptions, and $L \vr X$ by Corollary \ref{cor:retr_in_virt_ab_gps}.
Therefore $L \times M \vr X \times Y$ by Lemma \ref{lem:basic_props_vr}.(v). Note that $H \subseteq L \times M$, so, according to Lemma \ref{lem:basic_props_vr}.(iv),
to show $H \vr X \times Y$ it is sufficient to prove that $H \vr L \times M$.

Observe that the subgroup $S=H\cap L$ is normalized by $H$ (as $L \lhd L \times M$), and, since $H$ projects \emph{onto} $L$, we can deduce that $S \lhd L$
(cf. \cite[Lemma 2.1.(i)]{Min-subdir}). The group $L$ is finitely generated and virtually abelian, as it is a subgroup of $X$, so we can apply
Lemma \ref{lem:virt_ab_normal_retr} to find $R \lhd L$ such that $|L:SR|<\infty$ and
$S \cap R$ is trivial. Then $R \lhd L \times M$, in particular $R$ is normalized by $H$. Moreover, $R \cap H=R \cap (L \cap H)=R \cap S$ is trivial, so it remains to show that
$|(L \times M):HR|<\infty$. Indeed, note that $|L:(L \cap HR)| \le |L: SR|<\infty$ and the subgroup $HR \leqslant L \times M$ still projects \emph{onto} $L$ and $M$. Hence
$|(L \times M):HR|=|L:(L \cap HR)|<\infty$ by \cite[Lemma 2.1.(iii)]{Min-subdir}. Thus $H \vr L \times M$, so $H \vr X \times Y$ and the proof is complete.
\end{proof}

Although \VR{} may fail to pass to finite index overgroups (see Proposition~\ref{prop:f_i_overgp-not_VR}), it is preserved by some quotients and
by extensions with finite kernel.
\begin{lemma} Let $\{1\} \to L \hookrightarrow G \stackrel{\psi}{\twoheadrightarrow} M \to\{1\}$ be a short exact sequence of groups.
\begin{itemize}
  \item[(i)] If $G$ has \VR{} and $L$ is finitely generated then $M$ has \VR{}.
  \item[(ii)] If $M$ has \VR{}, $L$ is finite and $G$ is residually finite then $G$ has \VR{}.
\end{itemize}
\end{lemma}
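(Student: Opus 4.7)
For part (i), the plan is to push the retraction down through $\psi$. Let $H \leqs M$ be finitely generated, and consider its full preimage $H' = \psi^{-1}(H) \leqs G$, which fits into the short exact sequence $\{1\} \to L \to H' \to H \to \{1\}$. Since both $L$ and $H$ are finitely generated, so is $H'$, whence property \VR{} of $G$ provides a finite index subgroup $K \leqs G$ with $H' \subseteq K$ and a retraction $\rho : K \to H'$. The crucial observation is that $L \subseteq H'$ and $\rho|_{H'} = \mathrm{id}$ together force $\rho(L) = L \subseteq \ker\psi$; hence $\psi \circ \rho$ descends through $\psi$ to give a well-defined homomorphism $\bar\rho : \psi(K) \to H$ by $\bar\rho(\psi(g)) = \psi(\rho(g))$. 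For $h \in H$, picking a preimage in $H' \subseteq K$ shows $\bar\rho(h) = \psi(\rho(h)) = \psi(h) = h$, so $\bar\rho$ is a retraction onto $H$. Since $|M:\psi(K)| \leqs |G:K| < \infty$, this gives $H \vr M$.

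For part (ii), take a finitely generated $H \leqs G$. Since $\psi(H)$ is finitely generated and $M$ has \VR{}, one obtains a finite index subgroup $K' \leqs M$ with $\psi(H) \subseteq K'$ and a retraction $\sigma : K' \to \psi(H)$. Let $K = \psi^{-1}(K') \leqs G$, of finite index and containing $H$, and set $\phi = \sigma \circ \psi|_K : K \to \psi(H)$ with $N = \ker\phi \lhd K$. Using that $\sigma$ is the identity on $\psi(H)$, a direct check shows $\phi(H) = \psi(H) = \phi(K)$, so $K = HN$, while $H \cap N = \{h \in H : \psi(h) = 1\} = H \cap L$. This intersection is finite (as $L$ is), but it is precisely the obstruction to invoking Remark \ref{rem:equiv_virt_retr} immediately: the ``complement'' $N$ meets $H$ in $H \cap L$ rather than trivially.

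To clear this residue, invoke residual finiteness of $G$ together with finiteness of $L$ to choose a finite index normal subgroup $N_0 \lhd G$ with $N_0 \cap L = \{1\}$, and set $N'' = N \cap N_0$. Then $N''$ is normal in $K$ as an intersection of two subgroups normal in $K$, the intersection $H \cap N'' \subseteq (H \cap L) \cap N_0$ is trivial, and a simple index calculation gives $|K : HN''| \leqs |N : N''| \leqs |G : N_0| < \infty$. Remark \ref{rem:equiv_virt_retr} now yields $H \vr G$. The main obstacle is exactly the step just described: a retraction of $M$ pulled back along $\psi$ does not yield a semidirect decomposition when $L$ is non-trivial, and residual finiteness of $G$ combined with finiteness of $L$ is precisely what allows the complement to be shrunk to repair this. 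Both hypotheses are essential, since \VR{} does not in general lift to finite-index overgroups by Proposition \ref{prop:f_i_overgp-not_VR}.
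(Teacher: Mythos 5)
Your proposal is correct and takes essentially the same route as the paper: in (i) you pull the subgroup back along $\psi$, apply \VR{} in $G$, and push the virtual retraction data back down to $M$ (the paper phrases this via a normal complement $N$ and its image $\psi(N)$ rather than descending the retraction map itself, but the content is identical). In (ii) your pull-back of the retraction followed by shrinking the kernel using residual finiteness of $G$ and finiteness of $L$ is exactly the paper's argument, up to the cosmetic difference that you choose the finite index normal subgroup to avoid $L\setminus\{1\}$ instead of $ (H\cap N)\setminus\{1\}$.
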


\begin{proof} (i) Consider any finitely generated subgroup $A \leqslant M$. Since $L$ is finitely generated, $H = \psi^{-1}(A) \leqslant G$ is also finitely generated, hence
$H \vr G$ as $G$ satisfies \VR{}. Thus there exists a subgroup $N \leqslant G$ such that $N$ is normalized by $H$, $H \cap N=\{1\}$ and $|G:HN|<\infty$.

Set $B=\psi(N) \leqslant M$ and observe that $B$ is normalized by $A=\psi(H)$ and $|M:AB|\le |G:HN|<\infty$. Note that
\[\psi^{-1}(A \cap B)=\psi^{-1}(A) \cap \psi^{-1}(B)=H \cap NL=L,\] where the last equality follows from the fact that
$L \subseteq H$, so $H \cap NL=(H \cap N)L=L$. Therefore $A \cap B$ is trivial in $M=G/L$, thus $A \vr M$. Hence $M$ has \VR{}.

(ii) Let $H \leqslant G$ be a finitely generated subgroup. Then $A=\psi(H) \vr M$, since $M$ satisfies \VR{}, so there is $B \leqslant M$ such that $B$ is normalized by $A$,
$A \cap B=\{1\}$ and $|M:AB|<\infty$. It is clear that $N=\psi^{-1}(B) \leqslant G$ will be normalized by $H$, will have finite intersection with $H$ (as $|L|<\infty$)
and will satisfy $|G:HN|=|M:AB|<\infty$.

Since $G$ is residually finite, there is a finite index normal subgroup $K\lhd G$ which intersects $H \cap N$ trivially. After setting $N'=N \cap K$, we see that $N'$
is normalized by $H$, $H \cap N'=\{1\}$ and $|HN:HN'|\le |N:N'|<\infty$, so $|G:HN'|<\infty$. Hence $H \vr G$, and thus $G$  has \VR{}.
\end{proof}

Recall that the \emph{first virtual betti number} of a group $G$ is defined by
\[\vb(G)=\sup\{\bone(K) \mid K \leqslant G,~|G:K|<\infty\},\]
where $\bone(K)$ is the \emph{$\Q$-rank} $\rk(K/[K,K])$ of the abelianization $K/[K,K]$; in other words,
$\bone(K)=\rk(K/[K,K])=\dim_\Q\bigl(K/[K,K] \otimes \Q\bigr)$.

\begin{prop}[{cf. \cite[Theorem 2.14]{L-R}, \cite[Chapter 6, (G.19)]{A-F-W}}]\label{prop:VRC->inf_vbn}
Let $G$ be a virtually torsion-free group with \VRC{}. If $G$ is not virtually abelian then $\vb(G)=\infty$.
\end{prop}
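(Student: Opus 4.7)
My plan is to reduce to the case that $G$ itself is torsion-free. Since $G$ is virtually torsion-free, it contains a torsion-free subgroup $T$ of finite index; $T$ inherits property \VRC{} by Lemma~\ref{lem:vr-obs}.(i), is still not virtually abelian, and every finite index subgroup of $T$ is a finite index subgroup of $G$, so $\vb(T) \le \vb(G)$. Hence it is enough to prove $\vb(T) = \infty$, and without loss of generality I assume $G$ itself is torsion-free.

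The main step is to construct, by induction on $n \in \N$, a finite index subgroup $K_n \leqslant G$ together with an epimorphism $\varphi_n \colon K_n \twoheadrightarrow \Z^n$; such a $\varphi_n$ forces $\bone(K_n) \ge n$, whence $\vb(G) \ge n$ for every $n$. For the base case $n = 1$, as $G$ is not virtually abelian it is infinite, and being torsion-free it contains some $g$ of infinite order; property \VRC{} then yields a finite index subgroup $K_1 \leqslant G$ retracting onto $\langle g \rangle \cong \Z$, which serves as $\varphi_1$.

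For the inductive step, set $L_n \coloneq \ker \varphi_n$. The subgroup $K_n$ has finite index in $G$, so it is torsion-free and not virtually abelian; in particular $L_n$ must be non-trivial, for otherwise $\varphi_n$ would embed $K_n$ into $\Z^n$ and render $G$ virtually abelian. Any non-identity element $c \in L_n$ thus has infinite order. Apply \VRC{} once more to obtain a finite index subgroup $K' \leqslant G$ containing $c$ and a retraction $\rho \colon K' \to \langle c \rangle$, and set $K_{n+1} \coloneq K_n \cap K'$. Define $\psi \colon K_{n+1} \to \Z^n \times \Z$ by $\psi(k) = (\varphi_n(k), \rho(k))$, identifying $\langle c \rangle$ with $\Z$ via $c \mapsto 1$. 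Because $c \in L_n = \ker \varphi_n$ and $\rho(c) = c$, one has $\psi(c) = (0, 1)$; and because $|K_n : K_{n+1}| < \infty$, the first-coordinate image $\varphi_n(K_{n+1})$ has finite index, hence rank $n$, in $\Z^n$. Consequently $\psi(K_{n+1})$ is a finite index (rank $n+1$) subgroup of $\Z^{n+1}$, and corestricting $\psi$ to its image delivers the desired epimorphism $\varphi_{n+1}$.

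No step of this argument looks particularly delicate; the purpose of the virtual torsion-freeness of $G$ is precisely to enable the initial reduction, so that any non-trivial subgroup of $G$ automatically supplies an infinite-order element. Without it the induction could a priori stall at the point where $\ker \varphi_n$ might fail to be finite yet contain no elements of infinite order.
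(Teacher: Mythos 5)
Your argument is correct and is essentially the paper's own proof: the same rank-boosting mechanism of choosing an infinite-order element $c$ in the kernel of the current abelian quotient, using \VRC{} to obtain a finite index subgroup with a homomorphism to $\Z$ sending $c$ to a generator, and combining the two maps into $\psi=(\varphi,\rho)$ to increase the first Betti number by one. The only cosmetic differences are that you first pass to a torsion-free finite index subgroup and iterate explicit epimorphisms onto $\Z^n$, whereas the paper works directly with the full abelianization $K/[K,K]$ of an arbitrary finite index subgroup $K$ and locates the infinite-order element inside $[K,K]$.
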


\begin{proof} It is sufficient to prove that for any subgroup of finite index $K \leqslant G$ there is a finite index subgroup $L \leqslant K$ such that
$\bone(L) \ge \bone(K)+1$.

Let $\varphi:K \to A=K/[K,K]$ be the natural homomorphism from $K$ to it abelianization. Since $K$ is virtually torsion-free and is not virtually abelian,
$\ker\varphi=[K,K]$ must be infinite, and there must exist an infinite order element $g \in \ker\varphi$. By the assumptions, $\langle g \rangle \vr G$, hence there is a
finite index subgroup $L \leqslant K$ which contains $g$ and has a homomorphism $\rho:L \to \Z$ such that $\rho(g)=1$ (here $1$ denotes the generator of $\Z$).

Consider the homomorphism $\psi:L \to A \oplus \Z$ defined by $\psi(h)=(\varphi(h),\rho(h))$ for all $h \in L$. Note that $\psi(\langle g \rangle) = \{0\}\oplus\Z$
since $g \in \ker\varphi$. On the other hand, we claim that $\psi(L)$ contains $\varphi(L) \oplus\{0\}$,
which is a finite index subgroup of $A\oplus\{0\}$, since $|K:L|<\infty$. Indeed, for
every $a \in \varphi(L)$ we can take any $h\in \varphi^{-1}(a) \cap L$ and let $n=-\rho(h) \in \Z$; then $\psi(hg^n)=(a,0)$.
It follows that $\psi(L)$ has finite index in $A\oplus \Z$, so $\rk(\psi(L))=\rk(A\oplus \Z)=\rk(A)+1=\bone(K)+1$.

Since $\psi(L)$ is an abelian quotient of $L$, it must also be a quotient of $L/[L,L]$, so $\bone(L)=\rk(L/[L,L])\ge \rk(\psi(L))\ge \bone(K)+1$, as required.
\end{proof}

\begin{rem}
The assumption that $G$ is virtually torsion-free is necessary in Proposition \ref{prop:VRC->inf_vbn}:
by Theorem \ref{thm:wreath->VRAb} below the lamplighter group $\Z_2\wr \Z$ satisfies \VRC{}
but $\vb(\Z_2\wr \Z)=1$, as it is torsion-by-cyclic.
\end{rem}

\section{Amalgams over retracts}\label{sec:amalg_over_retr}
The free product of two groups with \VR{} again has \VR{} by a result of Gitik, Margolis and Steinberg \cite[Theorem 1.5]{G-M-S}. In this section we will show that property
\VRC{} is closed under a more general construction of amalgamated products over retracts.

Let $P$ and $Q$ be groups, let $R$ be a retract of $P$ and let $S$ be a retract of $Q$. Given an isomorphism $\varphi:R \to S$, we can form the amalgamated free product
\begin{equation}\label{eq:amalg_over_retr}
G=P*_{R=S} Q=\langle P,Q \,\|\, r=\varphi(r)~\forall\,r \in R\rangle.
\end{equation}
In this case we will say that $G$ is an \emph{amalgamated product of $P$ and $Q$ over the common retract $R$}. Let $\rho_1:P \to R$ and $\rho_2:Q \to S$ be retractions of $P$
onto $R$ and of $Q$ onto $S$, with kernels $K_1=\ker \rho_1 \lhd R$ and $K_2=\ker\rho_2\lhd Q$. Then $P=K_1 \rtimes R$, $Q=K_2 \rtimes S\cong K_2 \rtimes R$, and the group
$G$, given by \eqref{eq:amalg_over_retr}, is naturally isomorphic to the semidirect product $(K_1*K_2) \rtimes R$, where $R$ normalizes each $K_i$, $i=1,2$ (the action of $R$ on $K_1$ is induced from $P$, and the action of $R$ on $K_2$ comes from the composition of $\varphi$ with the action of $S$ on $K_2$ in $Q$).
This isomorphism was first observed by Boler and Evans \cite{Boler-Evans}, who used it to prove that $G$ is residually finite whenever $P$ and $Q$ are residually finite.
Both decompositions of $G$ as an amalgamated free product and as the semidirect product will be useful for us below.

Given normal subgroups $N_1 \lhd P$, $N_1 \subseteq K_1$, and $N_2 \lhd Q$, $N_2 \subseteq K_2$, since $N_1 \cap R=\{1\}$ and $N_2 \cap S=\{1\}$, the epimorphisms
$P \to \overline{P}=P/N_1$ and $Q \to \overline{Q}=Q/N_2$ naturally give rise to an epimorphism
\begin{equation}\label{eq:psi}
\psi_{N_1,N_2}: G=P*_{R=S} Q \to \overline{G}=\overline{P}*_{\overline{R}=\overline{S}} \overline{Q},
\end{equation}
 where
$\overline{R}$ and $\overline{S}$ are the images of $R$ and $S$ in $\overline{P}$ and $\overline{Q}$ respectively.
Obviously $\overline{R} \cong R$ is still a retract of $\overline{P}$
(the kernel of the retraction is $K_1/N_1$) and $\overline{S} \cong S \cong R \cong \overline{R}$ is still a retract of $\overline{Q}$.
Thus $\overline{G}$ is an amalgamated product of $\overline{P}$ and $\overline{Q}$ over the common retract $\overline{R}$.

Note that $\overline{R}$ normalizes each of the subgroups $\overline{K}_i=K_i/N_i$, $i=1,2$, in $\overline{G}$. The
 following observation stems from the fact that the centralizer of a finite subgroup has finite index in its normalizer.

\begin{rem}\label{rem:basic_quot-decomp} In the notation above, suppose that $\overline{K}_i$ is finite, for $i=1,2$.
Then the group $\overline{G} \cong (\overline{K}_1*\overline{K}_2) \rtimes \overline{R}$ has a finite index subgroup decomposing as the direct product
$(\overline{K}_1*\overline{K}_2)\times \overline{R}_0$, where $\overline{R}_0$ is a finite index subgroup of $\overline{R}$.
\end{rem}

Observe that when $N_1=\{1\}$ and $N_2=K_2$, $\eta_1=\psi_{\{1\},K_2}$ is basically a retraction of $G$ onto $P$, whose kernel is the normal closure of $K_2$ in $G$.
Similarly, the homomorphism $\psi_{K_1,\{1\}}$ gives rise to a retraction $\eta_2:G \to Q$. Let $\eta:G \to P \times Q$ denote the homomorphism defined by
$\eta(g)=(\eta_1(g),\eta_2(g))$ for all $g \in G$; then $\eta$ is injective on both $P$ and $Q$.

We will only require the next lemma in the case when $A$ is cyclic. However, in the general form stated here this lemma may be of independent interest and may have
other applications.

\begin{lemma} \label{lem:am_over_retr-basic_quot} Using the notation above, suppose that $P$ and $Q$ are residually finite,
$G$ is given by \eqref{eq:amalg_over_retr}, and $A \leqslant G$ is a subgroup which contains no non-abelian free normal subgroups.
Then either the homomorphism $\eta:G \to P \times Q$ is injective on $A$ or there exist normal subgroups $N_1 \lhd P$, $N_2 \lhd Q$ such that
$N_i \subseteq K_i$, $|K_i/N_i|<\infty$, $i=1,2$, and the epimorphism $\psi_{N_1,N_2}:G \to \overline G$, from \eqref{eq:psi}, is injective on $A$.
\end{lemma}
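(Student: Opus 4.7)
The plan is to analyze the intersection $A \cap \ker\eta$. Since $\eta_1 = \psi_{\{1\}, K_2}$ and $\eta_2 = \psi_{K_1, \{1\}}$, their kernels are the normal closures in $G$ of $K_2$ and $K_1$ respectively, both sitting inside the normal subgroup $K_1 * K_2 \lhd G$. Thus $\ker\eta = \ker\eta_1 \cap \ker\eta_2$ coincides with the cartesian subgroup $M$ of $K_1 * K_2$, namely the kernel of the canonical map $K_1 * K_2 \twoheadrightarrow K_1 \times K_2$. As $M$ meets each free factor $K_i$ trivially, $M$ acts freely on the Bass--Serre tree of $K_1 * K_2$ and is therefore a free group. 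Since $M \lhd G$, the intersection $A \cap M$ is normal in $A$; as a subgroup of a free group it is free, so our hypothesis on $A$ forces $A \cap M$ to be cyclic. If it is trivial, $\eta$ is injective on $A$ and we are done. Henceforth write $A \cap M = \langle w \rangle$ with $w \neq 1$.

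The structural core of the proof, and its main obstacle, is to show that $A_0 := A \cap (K_1 * K_2)$ is virtually infinite cyclic. Since $w \in M$ has reduced length at least $2$ in $K_1 * K_2$ and none of its syllables lies in $R$ (because $K_i \cap R = \{1\}$), $w$ is a hyperbolic isometry of the Bass--Serre tree $\cT$ of $G = P *_R Q$; let $\alpha$ be its axis. Because $\langle w \rangle$ is normal in $A_0$, the group $A_0$ preserves $\alpha$ setwise, giving a homomorphism $A_0 \to \mathrm{Isom}(\alpha) \cong \Z \rtimes \Z/2$. Its kernel is $A_0$ intersected with the pointwise fixator of $\alpha$ in $G$, which is an intersection of edge stabilizers, each conjugate to $R$. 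The decomposition $G \cong (K_1 * K_2) \rtimes R$ gives $R \cap (K_1 * K_2) = R \cap K_1 = \{1\}$, and normality of $K_1 * K_2$ in $G$ propagates this: $gRg^{-1} \cap (K_1 * K_2) = g(R \cap (K_1 * K_2))g^{-1} = \{1\}$ for every $g \in G$. Hence this kernel meets $A_0 \subseteq K_1 * K_2$ trivially, so $A_0 \hookrightarrow \mathrm{Isom}(\alpha)$ is virtually cyclic, and $D := A_0/\langle w \rangle$ is finite.

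Finally, with $D$ finite, I would produce $N_1, N_2$ by two parallel residual finiteness arguments. Expressing $w = x_1 y_1 x_2 y_2 \cdots$ in reduced form with $x_i \in K_1 \setminus \{1\}$ and $y_i \in K_2 \setminus \{1\}$, residual finiteness of $P$ and $Q$ yields finite-index normal subgroups of $P$ and $Q$ avoiding all $x_i$ and $y_i$; intersecting with $K_1$ and $K_2$ gives $N_1^\prime \lhd P$ and $N_2^\prime \lhd Q$ with $\psi_{N_1^\prime, N_2^\prime}(w) \neq 1$. Independently, since $D \subseteq K_1 \times K_2$ is finite and $P \times Q$ is residually finite, one finds $N_1^{\prime\prime} \lhd P$ and $N_2^{\prime\prime} \lhd Q$ of finite index in $K_1, K_2$ with $D \cap (N_1^{\prime\prime} \times N_2^{\prime\prime}) = \{1\}$. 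Taking $N_i := N_i^\prime \cap N_i^{\prime\prime}$ secures both properties. For injectivity of $\psi_{N_1, N_2}$ on $A$: using $\overline G \cong (\overline K_1 * \overline K_2) \rtimes R$, one sees $\ker\psi_{N_1, N_2} \subseteq K_1 * K_2$, so any $a \in A \cap \ker\psi_{N_1, N_2}$ lies in $A_0$; its image in $K_1 \times K_2$ lies in $D \cap (N_1 \times N_2) = \{1\}$, so $a \in M \cap A_0 = \langle w \rangle$. Writing $a = w^n$ and noting that $\psi_{N_1, N_2}(w)$ is non-trivial in the cartesian subgroup $\overline M$ of the free product $\overline K_1 * \overline K_2$ of finite groups, which is again a free group by the same tree argument as for $M$, we conclude that $\psi_{N_1, N_2}(w)$ has infinite order, forcing $n = 0$.
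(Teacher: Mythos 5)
Your argument is correct, and while it uses the same core ingredients as the paper's proof (the freeness of $\ker\eta$, the fact that the hypothesis on $A$ forces $A\cap\ker\eta$ to be infinite cyclic, the action on the axis of its generator giving a map to $D_\infty\cong\mathrm{Aut}$ of a line, and the Boler--Evans residual-finiteness trick to keep that generator alive in $\overline G$), your endgame is genuinely different. The paper works with all of $A$: it sets $B=\ker\bigl(A\to\mathrm{Aut}(\ell)\bigr)$, notes that $B$ lies in a conjugate of $R$, on which $\psi_{N_1,N_2}$ is injective, and then needs the commutative diagram with $\overline\eta$ to see that $\overline a$ is again hyperbolic on the tree $\overline\cT$ for $\overline G$, whence $\langle\overline a\rangle\cap\overline B=\{1\}$; injectivity on $A$ follows because $\ker\psi_{N_1,N_2}\cap A$ is then a finite normal subgroup of $A$, hence contained in $B$. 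You instead use the semidirect decomposition to note $\ker\psi_{N_1,N_2}\subseteq K_1*K_2$, reducing injectivity on $A$ to triviality of $A_0\cap\ker\psi_{N_1,N_2}$ with $A_0=A\cap(K_1*K_2)$; you show the image $D$ of $A_0$ in $K_1\times K_2$ is finite via the embedding $A_0\hookrightarrow\mathrm{Isom}(\alpha)$ (the kernel is killed because conjugates of $R$ meet the normal subgroup $K_1*K_2$ trivially), and then pay for this with an extra residual-finiteness step separating $D$ from $N_1\times N_2$, getting infinite order of $\psi_{N_1,N_2}(w)$ from freeness of the cartesian subgroup $\overline M$ rather than from hyperbolicity of $\overline a$ on $\overline\cT$. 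Your route avoids the second Bass--Serre tree and the handling of $B$ inside a conjugate of $R$, at the cost of the additional choice of $N_i''$; both arguments are of comparable weight.

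Two justifications should be tightened. First, hyperbolicity of $w$ on $\cT$ does not follow merely from ``reduced length at least $2$ with no syllable in $R$'': a reduced word that is not cyclically reduced can be elliptic (e.g.\ $pqp^{-1}$ with $p\in P\setminus R$, $q\in Q\setminus R$). The claim is nevertheless true for your $w$, and for the reason the paper gives: $M=\ker\eta\lhd G$ meets $P$ and $Q$ trivially, hence meets every vertex stabilizer of $\cT$ trivially, so every non-trivial element of $M$ is hyperbolic. Second, when producing $N_i''$ you should note that although a finite-index normal subgroup of $P\times Q$ need not be a direct product, it contains one of finite index (or, directly, separate the finitely many non-trivial first and second coordinates of elements of $D$ in $P$ and in $Q$ respectively); with that said, $D\cap(N_1''\times N_2'')=\{1\}$ follows as you claim.
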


\begin{proof} Suppose that $\eta$ is not injective on $A$, so $A\cap \ker \eta \neq \{1\}$. Note that by the definition of $\eta$,
$\ker\eta \cap P=\{1\}$  and $\ker\eta\cap Q=\{1\}$, so its kernel acts freely on the Bass-Serre tree $\cT$,
corresponding to the decomposition of $G$ as the amalgamated free product
\eqref{eq:amalg_over_retr} (the vertex stabilizers are conjugates of $P$ and $Q$ in $G$).
Hence this kernel is free (see \cite[Theorem 8.3]{D-D}), and, since $A$ contains no non-abelian free normal subgroups, $A \cap \ker\eta$
must be infinite cyclic, generated by some $a \in A\cap \ker \eta$.
The element $a$ cannot fix any vertex of $\cT$, so it must act as a hyperbolic isometry with some axis $\ell$.
Now, $A$ normalizes $\langle a \rangle$, so its action on $\cT$ will preserve $\ell$. This gives rise to a homomorphism $\xi:A \to Aut(\ell)$, where $Aut(\ell)$ is the
automorphism group of the simplicial line $\ell$, which is isomorphic to the infinite dihedral group $D_\infty$.
Let $B=\ker\xi \lhd A$, then $B \cap \langle a \rangle=\{1\}$ and $|A:B\langle a \rangle|<\infty$ (because $a$ acts on $\ell$ as a non-trivial translation).
%Moreover, since both $B$ and $\langle a \rangle$ are normal in $A$, $\langle B,a\rangle =B\langle a \rangle \cong B\times \langle a \rangle$.
Pick any edge $e$ of $\ell$. Since $B$ fixes all of $\ell$ pointwise, $B \subseteq \St_G(e)=gRg^{-1}$, for some $g \in G$.

Note that $\ker\eta \subseteq \langle K_1,K_2 \rangle\cong K_1*K_2$ (because $\langle K_1,K_2 \rangle \lhd G$),
hence $a \in \langle K_1,K_2\rangle$, $a \neq 1$. In this case it was
proved in \cite[p. 51]{Boler-Evans} that there exist normal subgroups $N_1 \lhd P$ and $N_2 \lhd Q$ such that $N_i \subseteq K_i$, $|K_i:N_i|<\infty$ and
$\psi_{N_1,N_2}(a) \neq 1$ in $\overline{G}$ (where $\overline{G}$ is given by \eqref{eq:psi}). The idea is simple: one can use the residual finiteness of
$P$ and $Q$ to find finite index normal subgroups $M_1 \lhd P$ and $M_2 \lhd Q$ which avoid all the non-trivial elements of $K_1$ and $K_2$, respectively, that
appear in the normal form of $a$ (when $a$ is viewed as an element of the free product $K_1*K_2$); one then sets $N_i=K_i \cap M_i$, $i=1,2$.

Denote $\overline{A}=\psi_{N_1,N_2}(A)$, $\overline{B}=\psi_{N_1,N_2}(B)$ and $\overline{a}=\psi_{N_1,N_2}(a)$ in $\overline{G}$.
By construction $\overline{a} \neq 1$ in $\overline{G}$ and $\psi_{N_1,N_2}$ is injective on $B$, as $B$ is contained in
a conjugate of $R$ in $G$ and $\psi_{N_1,N_2}$ is injective on $R$ by definition. Since $\overline{G}$ is an amalgamated product of $\overline{P}$ and $\overline{Q}$ over the common retract
$\overline{R}$, we have a natural homomorphism $\overline{\eta}: \overline{G} \to \overline{P}\times \overline{Q}$, which fits in the commutative diagram \eqref{eq:cd} below.

\begin{figure*}[ht!]
\begin{equation}\label{eq:cd}
\begin{CD}
G @>\psi_{N_1,N_2}>> \overline{G}\\
@VV\eta V @VV\overline{\eta}V\\
P\times Q @>>> \overline{P}\times\overline{Q}
\end{CD}
\end{equation}
\end{figure*}

Since $a \in \ker\eta$, it follows that $\overline{a} \in \ker\overline{\eta}$. And, just like in the case of $a$, we can use the latter to conclude that the non-trivial element
$\overline{a} \in \overline{G}$ must act as a hyperbolic isometry on the Bass-Serre tree $\overline{\cT}$, corresponding to the decomposition of $\overline{G}$ as an
amalgamated product over $\overline{R}$.
In particular, $\overline{a}$ will have infinite order in $\overline{G}$ and $\langle \overline{a} \rangle \cap h\overline{R}h^{-1}=\{1\}$ for every $h \in \overline{G}$
(because $h\overline{R}h^{-1}$ stabilizes an edge of $\overline{\cT}$).
Hence $ \langle \overline{a} \rangle \cap \overline{B}=\{1\}$ in $\overline{G}$.

We have shown that the homomorphism $\psi_{N_1,N_2}$ is injective on $B$ and on $\langle a \rangle$, and the images of these two subgroups have trivial intersection. Therefore
$\psi_{N_1,N_2}$ must be injective on the product $B \langle a \rangle$.
Thus  $B \langle a \rangle \cap  \ker\psi_{N_1,N_2}=\{1\}$, hence $|\ker\psi_{N_1,N_2} \cap A|<\infty$ as $|A:B \langle a \rangle|<\infty$. But all finite normal subgroups of $A$ are contained
in $B$ because $B=\ker\xi$ and the image of $\xi$ has no non-trivial finite normal subgroups ($\xi(A)$ is an infinite subgroup of $Aut(\ell) \cong D_\infty$,
so either $\xi(A) \cong \Z$ or $\xi(A) \cong D_\infty$).
Consequently, $\ker\psi_{N_1,N_2} \cap A=\{1\}$, so $\psi_{N_1,N_2}$ is injective on $A$, as claimed.
\end{proof}

We are now able to prove that property \VRC{} is stable under taking amalgamated products over retracts.

\begin{thm}\label{thm:vrc-stab_apr} Suppose that $P$, $Q$ are groups with isomorphic retracts $R \leqslant P$ and $S\leqslant Q$,
and $G=P*_{R=S} Q$ is the corresponding amalgamated product. If $P$ and $Q$ have \VRC{} then $G$ also has \VRC{}.
\end{thm}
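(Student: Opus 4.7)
The goal is to verify property \VRC{} for $G$ by showing $\langle a\rangle \vr G$ for an arbitrary cyclic subgroup $\langle a\rangle \leqs G$. First, I would note that $P$ and $Q$ are residually finite by Lemma~\ref{lem:vr-obs}(i), so the theorem of Boler--Evans applies and $G$ is residually finite. Since $\langle a\rangle$ is cyclic, it contains no non-abelian free normal subgroup, so Lemma~\ref{lem:am_over_retr-basic_quot} applies with $A=\langle a\rangle$ and splits the argument into two alternative cases according to whether or not the map $\eta:G\to P\times Q$ is injective on $\langle a\rangle$.

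In the first case, $\eta(\langle a\rangle)$ is a cyclic subgroup of $P\times Q$. By Lemma~\ref{lem:vrab-basics}(b) the direct product $P\times Q$ satisfies \VRC{}, hence $\eta(\langle a\rangle)\vr P\times Q$, and then Lemma~\ref{lem:basic_props_vr}(ii) applied to $\eta$ yields $\langle a\rangle\vr G$.

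In the second case we have normal subgroups $N_i \lhd P$, $N_i\subseteq K_i$, with $|K_i/N_i|<\infty$ for $i=1,2$, such that $\psi_{N_1,N_2}\colon G\to\overline G$ is injective on $\langle a\rangle$. Here I would invoke Remark~\ref{rem:basic_quot-decomp}, which says that $\overline G$ contains a finite index subgroup isomorphic to $(\overline K_1*\overline K_2)\times \overline R_0$, where $\overline K_i=K_i/N_i$ is finite and $\overline R_0\leqs \overline R$ has finite index. The free product $\overline K_1*\overline K_2$ is virtually free, so it has \VRC{} by Corollary~\ref{cor:v_free-VRC}; and $\overline R_0\leqs \overline R\cong R\leqs P$ inherits \VRC{} from $P$ via Lemma~\ref{lem:vr-obs}(i) (note that $R\cap N_1\subseteq R\cap K_1=\{1\}$, so $R$ embeds into $\overline P$). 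Lemma~\ref{lem:vrab-basics}(b) then gives \VRC{} for the direct product, and Lemma~\ref{lem:vrab-basics}(a) lifts \VRC{} to all of $\overline G$. In particular $\psi_{N_1,N_2}(\langle a\rangle) \vr \overline G$, and Lemma~\ref{lem:basic_props_vr}(ii) applied to $\psi_{N_1,N_2}$ delivers $\langle a\rangle\vr G$.

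The main obstacle is really conceptual: once Lemma~\ref{lem:am_over_retr-basic_quot} and Remark~\ref{rem:basic_quot-decomp} are identified as the correct tools, the proof reduces to an assembly of the permanence properties of \VRC{} already established (stability under subgroups, direct products, finite-index overgroups, and pushforward along maps injective on the relevant subgroup). The genuinely hard step was hidden in Lemma~\ref{lem:am_over_retr-basic_quot}, where Bass--Serre theory on the amalgamation tree and the Boler--Evans residual-finiteness construction were used to reduce to a quotient with finite edge-group image.
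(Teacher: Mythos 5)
Your proposal is correct and follows essentially the same route as the paper's own proof: both split on whether $\eta$ is injective on the cyclic subgroup, and in the non-injective case apply Lemma~\ref{lem:am_over_retr-basic_quot} and Remark~\ref{rem:basic_quot-decomp} to reduce to $(\overline K_1*\overline K_2)\times\overline R_0$, then assemble the permanence properties of \VRC{} via Lemmas~\ref{lem:vrab-basics} and~\ref{lem:basic_props_vr}(ii). The only additions (Boler--Evans residual finiteness of $G$, the explicit check that $R$ embeds in $\overline P$) are harmless and do not change the argument.
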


\begin{proof} Note that the groups $P$ and $Q$ are residually finite by Lemma \ref{lem:vr-obs}.(i).
Let $A \leqslant G$ be a cyclic subgroup, and let
$\eta:G \to P\times Q$ be the homomorphism defined before Lemma \ref{lem:am_over_retr-basic_quot}.
Then $\eta(A)\leqs P\times Q$ is cyclic, so $\eta(A) \vr P\times Q$ by Lemma \ref{lem:vrab-basics}.(b).
If the restriction of $\eta$ to $A$ is injective, then $A \vr G$ by Lemma \ref{lem:basic_props_vr}.(ii).

Therefore, we can assume that $\eta$ is not injective on $A$. This implies, according to Lemma~\ref{lem:am_over_retr-basic_quot}, that  there exist normal subgroups
$N_1\lhd P$, $N_2 \lhd Q$ such that the homomorphism $\psi_{N_1,N_2}:G \to \overline{G}$, given by \eqref{eq:psi},
is injective on $A$, $N_i \subseteq K_i$ and $\overline{K}_i=\psi_{N_1,N_2}(K_i)$ is finite, $i=1,2$.

Now, according to Remark \ref{rem:basic_quot-decomp}, there is a finite index subgroup $\overline{G}_0 \leqslant \overline{G}$,
which is isomorphic to the direct product $(\overline{K}_1* \overline{K}_2)\times \overline{R}_0$, for some finite index subgroup $\overline{R}_0\leqslant \overline{R}$.
Note that $\overline{R}_0\leqs \overline{R} \cong R \leqs P$ satisfies \VRC{} by Lemma \ref{lem:vr-obs}.(i). On the other hand, the free product
$\overline{K}_1* \overline{K}_2$ is virtually free (as $|\overline{K}_i|<\infty$, $i=1,2$), hence it satisfies \VRC{} by Corollary \ref{cor:v_free-VRC}.
Therefore, in view of  Lemma~\ref{lem:vrab-basics}.(b), the direct product $\overline{G}_0$ has \VRC{}. Claim (a) of the same lemma yields that $\overline{G}$ has \VRC{}, as
$|\overline{G}:\overline{G}_0|<\infty$.

Hence $\psi_{N_1,N_2}(A) \vr \overline{G}$. Since $\psi_{N_1,N_2}$ is injective
on $A$ we can, once again, use Lemma~\ref{lem:basic_props_vr}.(ii) to conclude that $A\vr G$. Thus the proposition is proved.
\end{proof}

\begin{ex}\label{ex:VR_not_closed_under_APRs}
In contrast to Theorem~\ref{thm:vrc-stab_apr}, property \VR{} is not, in general, closed under taking amalgamated free products over retracts.
To show this we can use an example suggested by Allenby and Gregorac in \cite{All-Greg}.
Let $F_2$ denote the free group of rank $2$, let $P \cong Q \cong \Z \times F_2$, and let $G$ be the amalgamated product of $P$ and $Q$ over $F_2$.
Obviously $G \cong F_2 \times F_2$, so $G$ is not LERF (cf. \cite[Example on p. 12]{All-Greg}), hence it does not have \VR{} by Lemma~\ref{lem:vr-obs}.(iii).
However, the groups $P$ and $Q$ both satisfy \VR{} by Corollary~\ref{cor:v_free-VRC} and Proposition~\ref{prop:vr_times_virt_ab_is_vr}.
\end{ex}

Theorem~\ref{thm:vrc-stab_apr} implies that \VRC{} is closed under taking free products (when $R$ and $S$ are trivial), but we can
extend this even further to amalgamated products and HNN-extensions over finite subgroups.

\begin{cor}\label{cor:VRC_stable_under_amalg_over_finite} Suppose that $G$ is the fundamental group of a finite graph of groups whose edge groups are
finite and vertex groups satisfy \VRC{}. Then $G$ also satisfies \VRC{}.
\end{cor}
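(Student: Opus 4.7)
The plan is to produce a finite-index subgroup $N$ of $G$ that decomposes as a free product of groups already known to have \VRC{}, and then to pass \VRC{} back up to $G$ via Lemma~\ref{lem:vrab-basics}(a). The key structural input, coming from Bass--Serre theory, is that when all edge groups are finite a suitable finite-index subgroup acts on the Bass--Serre tree with trivial edge stabilizers.

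First I would observe that each vertex group is residually finite by Lemma~\ref{lem:vrc->rf}, and then invoke the classical results on residual finiteness of amalgamated products and HNN-extensions over finite subgroups (due to Baumslag, Cohen and others) to conclude, by induction on the complexity of the graph of groups, that $G$ itself is residually finite. Let $\cT$ be the Bass--Serre tree of the given decomposition; then $G$ acts on $\cT$ cocompactly, with vertex stabilizers conjugate to the vertex groups and with finite edge stabilizers. Since $G\backslash\cT$ is a finite graph, there are only finitely many conjugacy classes of edge stabilizers in $G$; let $E_1,\dots,E_k$ be a set of representatives, a finite collection of finite subgroups of $G$. Using residual finiteness, choose a finite-index normal subgroup $N \lhd G$ with $N \cap E_j=\{1\}$ for every $j$; by normality, $N$ intersects every conjugate of every $E_j$ trivially, so $N$ acts on $\cT$ with trivial edge stabilizers.

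Since $|G:N|<\infty$, the quotient $N\backslash\cT$ is still a finite graph, and so $N$ is the fundamental group of a finite graph of groups with trivial edge groups. Collapsing the trivial edge groups exhibits $N$ as a free product of finitely many finite-index subgroups of conjugates of the vertex groups together with a finitely generated free group $F$ (whose rank equals one minus the Euler characteristic of $N\backslash\cT$). Each of these finite-index subgroups is a subgroup of a \VRC{} group and hence has \VRC{} by Lemma~\ref{lem:vr-obs}(i); and $F$ has \VRC{} by Corollary~\ref{cor:v_free-VRC}. Because a free product is an amalgamation over the trivial subgroup $\{1\}$, which is automatically a retract of any group, iterated application of Theorem~\ref{thm:vrc-stab_apr} shows that $N$ satisfies \VRC{}. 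Finally, Lemma~\ref{lem:vrab-basics}(a) upgrades \VRC{} from the finite-index subgroup $N$ to $G$.

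The main obstacle is the Bass--Serre theoretic construction of $N$: one must have residual finiteness of $G$ available (not established in the excerpt, but classical given the hypothesis) and then apply it to the finite list of edge-stabilizer representatives. Once $N$ is in hand, the \VRC{} conclusion for $G$ follows immediately by combining Theorem~\ref{thm:vrc-stab_apr}, Corollary~\ref{cor:v_free-VRC}, and Lemma~\ref{lem:vrab-basics}(a).
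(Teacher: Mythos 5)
Your proposal is correct and follows essentially the same route as the paper: pass to a finite-index normal subgroup meeting all (conjugates of) edge groups trivially (using residual finiteness of $G$, which the paper gets from residual finiteness of the vertex groups via \cite[Chapter II.2, Proposition 12]{Serre}), apply the Structure Theorem to split it as a free product of a free group with finite-index subgroups of vertex groups, and then combine Corollary~\ref{cor:v_free-VRC}, Theorem~\ref{thm:vrc-stab_apr} and Lemma~\ref{lem:vrab-basics}.(a).
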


\begin{proof} Let $\cT$ be the Bass-Serre tree corresponding to the given splitting of $G$.
The vertex groups of the splitting are residually finite by Lemma \ref{lem:vrc->rf}, hence the group $G$ is itself residually finite
by \cite[Chapter II.2, Proposition 12]{Serre}. Since the edge groups of the splitting of $G$ are finite and there are finitely many of them,
there is a finite index normal subgroup $K \lhd G$ which has trivial intersection with every (conjugate of) edge group.
It follows that $K$ acts on $\cT$ cocompactly and with trivial edge stabilizers. The Structure Theorem of Bass-Serre Theory
(\cite[Chapter I.5.4, Theorem 13]{Serre} or \cite[Chapter I.4, Theorem 4.1]{D-D}), tells us that $K$ splits as a free product $F*(\ast_{i=1}^n K_i)$, where
$F$ is a free group and each $K_i$ is isomorphic to a finite index subgroup of some vertex group from the original splitting of $G$.
In particular, $K_i$ has \VRC{} for every $i=1,\dots,n$.
Therefore the free product $K=F*(\ast_{i=1}^n K_i)$ satisfies \VRC{} by Corollary~\ref{cor:v_free-VRC} and Theorem~\ref{thm:vrc-stab_apr}.
Thus $G$ has \VRC{} by Lemma~\ref{lem:vrab-basics}.(a).
\end{proof}

\section{Graph products}\label{sec:graph_prod}
Let $\Gamma$ be a simplicial graph with vertex set $V\Gamma$ and edge set $E\Gamma$, and let $\mathfrak{G}=\{G_v \mid v \in V\Gamma\}$ be a collection of groups.
The \emph{graph product} $\GG$ is defined as the group obtained from the free product $\ast_{v \in V\Gamma} G_v$ by imposing the relations
\[ [g_u,g_v]=1 ~\mbox{ for all } g_u \in G_u, g_v \in G_v \mbox{ whenever } (u,v) \in E\Gamma.\]

Graph products were originally introduced by E. Green \cite{Green} and naturally generalize free and direct products of groups.
Prominent examples of graph products include \emph{right angled Artin groups}
(when $G_v \cong \Z$ for all $v \in V\Gamma$) and \emph{right angled Coxeter groups} (when $G_v \cong \Z_2$ for all $v \in V\Gamma$). For more background
the interested reader is referred to \cite[Subsection 2.2]{Ant-Min}.

Given a graph product $G=\GG$ and any subset $U \subseteq V\Gamma$, the subgroup $G_U=\langle G_u \mid u \in U\rangle $ is called a \emph{full subgroup of $G$}.
This subgroup is naturally isomorphic to the graph product $\Gamma_U\mathfrak{G}_U$, where $\Gamma_U$ is the full (induced) subgraph of $\Gamma$ on the vertices from $U$ and
$\mathfrak{G}_U=\{G_u \mid u \in U\}$ (see \cite[Section 3]{Ant-Min}). In particular, every group $G_v$, $v \in V\Gamma$, naturally embeds in $G=\GG$
(as the full subgroup $G_{\{v\}}$).

For any full subgroup $G_U \leqslant G=\GG$,  there is a \emph{canonical retraction} $\rho_U:G\to G_U$,
which is defined as the identity map on each $G_u$, $u \in U$, and $\rho_U(g)=1$ for all $g \in G_v$ with $v \in V\Gamma \setminus U$.

When the graph $\Gamma$ is finite, the graph product of groups $G=\GG$ can be constructed from the vertex groups iteratively, by taking amalgamated free
products over retracts. Indeed,
for a vertex $v \in V\Gamma$, let $\link(v) \subseteq V\Gamma$ denote the set of all vertices of $V\Gamma\setminus\{v\}$ adjacent to $v$.
From the defining presentation of $G=\GG$,
we see that it naturally decomposes as the amalgamated product of its full subgroups:
\begin{equation}\label{eq:decomp_of_GG}
G \cong G_A *_{G_C} G_B, ~\mbox{ where } A=V\Gamma\setminus\{v\},~C=\link(v) \mbox{ and } B= C\cup \{v\}.
\end{equation}

\begin{thm}\label{thm:gp_of_vrc} Let $\Gamma$ be a finite graph, let $\mathfrak{G}=\{G_v\mid v \in V\Gamma\}$ be a collection of groups.
If each of the groups $G_v$, $v \in V\Gamma$, satisfies \VRC{} then so does the graph product $G=\GG$.
\end{thm}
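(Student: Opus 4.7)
My plan is to prove Theorem~\ref{thm:gp_of_vrc} by induction on the number of vertices $n = |V\Gamma|$, using the decomposition~\eqref{eq:decomp_of_GG} as the inductive step and invoking Theorem~\ref{thm:vrc-stab_apr} to combine the pieces.

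For the base case $n = 1$, the graph product $G = \GG$ is just the single vertex group $G_v$, which has \VRC{} by hypothesis. For the inductive step, assume the result holds for all graph products over graphs with fewer than $n$ vertices, where $n \geq 2$, and let $\Gamma$ have $n$ vertices. Pick any vertex $v \in V\Gamma$, set $A = V\Gamma \setminus \{v\}$, $C = \link(v)$, and $B = C \cup \{v\}$, and invoke the decomposition
\[ G \cong G_A *_{G_C} G_B \]
from~\eqref{eq:decomp_of_GG}. The subgraph $\Gamma_A$ has $n-1$ vertices, so by the inductive hypothesis the full subgroup $G_A$ has \VRC{}. Similarly $\Gamma_C$ has at most $n-1$ vertices, so $G_C$ has \VRC{} by induction. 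Since $v$ is adjacent to every vertex in $C$, the induced subgraph $\Gamma_B$ is a cone on $v$ over $\Gamma_C$, which means that $G_B$ decomposes as the direct product $G_v \times G_C$. Since $G_v$ has \VRC{} by hypothesis and $G_C$ has \VRC{} by induction, Lemma~\ref{lem:vrab-basics}.(b) yields that $G_B$ has \VRC{}.

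It remains to verify that $G_C$ is a common retract of $G_A$ and $G_B$, so that Theorem~\ref{thm:vrc-stab_apr} applies. But this is immediate from the canonical retraction $\rho_C : G \to G_C$: since $C \subseteq A$ and $C \subseteq B$, the restrictions of $\rho_C$ to $G_A$ and to $G_B$ are retractions onto $G_C$ (they fix $G_C$ pointwise by definition). Thus $G \cong G_A *_{G_C} G_B$ is an amalgamated product over a common retract of two groups with \VRC{}, and Theorem~\ref{thm:vrc-stab_apr} gives that $G$ itself has \VRC{}, completing the induction.

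I do not foresee a serious obstacle here, as all the hard work is packaged in Theorem~\ref{thm:vrc-stab_apr} and Lemma~\ref{lem:vrab-basics}. The only mild point requiring care is the identification $G_B \cong G_v \times G_C$, but this follows directly from the definition of a graph product once one observes that $\Gamma_B$ is a cone over $\Gamma_C$ with apex $v$.
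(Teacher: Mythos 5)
Your proof is correct and follows essentially the same route as the paper: induction on $|V\Gamma|$, the decomposition \eqref{eq:decomp_of_GG}, and Theorem~\ref{thm:vrc-stab_apr}, with Lemma~\ref{lem:vrab-basics}.(b) supplying the direct product step. The only (harmless) difference is that you treat $G_B\cong G_v\times G_C$ uniformly via the cone observation, which even absorbs the degenerate case $\link(v)=V\Gamma\setminus\{v\}$ that the paper handles as a separate case.
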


\begin{proof} We will use induction on $|V\Gamma|$. The base of induction, when $|V\Gamma| \le 1$, obviously holds, so let us assume that $|V\Gamma| \ge 2$ and the statement
has been proved for all graph products over graphs with fewer vertices.

Take any vertex $v \in V\Gamma$. If $v$ is adjacent to every other vertex in $V\Gamma$, then, by the definition of a graph product,
$G\cong G_v \times G_A$, where $A=V\Gamma\setminus\{v\}$. By the assumptions and the induction hypothesis,
both $G_v$ and $G_A$ satisfy property \VRC{}, so we can apply
Lemma \ref{lem:vrab-basics}.(b) to conclude that $G$ also satisfies this property.

Thus we can suppose that $C= \link(v) \subsetneqq V\Gamma\setminus \{v\}$. Then both $A=V\Gamma\setminus\{v\}$ and $B=C \cup \{v\}$ are proper subsets of $V\Gamma$,
so $G_A$ and $G_B$ satisfy \VRC{} by the induction hypothesis. Recalling the decomposition \eqref{eq:decomp_of_GG} of $G$, and the fact that
$G_C$ is a canonical retract of both $G_A$ and $G_B$, we can use Theorem~\ref{thm:vrc-stab_apr} to deduce that $G$ has \VRC{}, as required.
\end{proof}

Since every finitely generated subgroup of a graph product $G=\GG$ is contained in a full subgroup $G_U$, for some finite subset $U \subseteq V\Gamma$
(see \cite[Remark 3.1]{Ant-Min}), and $G_U \vr G$, one can combine Theorem~\ref{thm:gp_of_vrc} with Lemma~\ref{lem:basic_props_vr}.(iv) to deduce that
the conclusion of this theorem also holds when $\Gamma$ is infinite.

\begin{cor}\label{cor:gp_stability} Property \VRC{} is preserved under taking arbitrary graph products.
\end{cor}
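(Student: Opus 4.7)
The plan is to reduce the general (possibly infinite $\Gamma$) case to the finite-$\Gamma$ case already handled by Theorem~\ref{thm:gp_of_vrc}, using transitivity of the $\vr$-relation.

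First I would take an arbitrary cyclic subgroup $\langle g \rangle \leqslant G = \GG$; since this subgroup is finitely generated (in fact generated by a single element), I invoke the observation cited from \cite[Remark 3.1]{Ant-Min}: every finitely generated subgroup of $\GG$ is contained in a full subgroup $G_U$ for some \emph{finite} $U \subseteq V\Gamma$. So $\langle g \rangle \subseteq G_U$ for some such finite $U$.

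Next I would note that $G_U$ is a retract of $G$ via the canonical retraction $\rho_U : G \to G_U$ described just before \eqref{eq:decomp_of_GG}, hence in particular $G_U \vr G$. Since $U$ is finite, $G_U$ is isomorphic to the graph product $\Gamma_U \mathfrak{G}_U$ over the finite graph $\Gamma_U$ with vertex groups $\{G_u \mid u \in U\}$; each $G_u$ satisfies \VRC{} by hypothesis, so Theorem~\ref{thm:gp_of_vrc} applies and tells us that $G_U$ itself has \VRC{}. Consequently $\langle g \rangle \vr G_U$.

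Finally, applying transitivity of $\vr$ (Lemma~\ref{lem:basic_props_vr}.(iv)) to the chain $\langle g \rangle \vr G_U \vr G$ yields $\langle g \rangle \vr G$. As $\langle g \rangle$ was an arbitrary cyclic subgroup, this shows $G$ has \VRC{}, completing the proof. There is essentially no obstacle here: the reduction to finite full subgroups is the only conceptual step, and both ingredients (the finite-support property of finitely generated subgroups of graph products, and transitivity of virtual retracts) are already in hand, so the argument is just an assembly of pieces.
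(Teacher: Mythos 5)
Your proposal is correct and is essentially identical to the paper's argument: the author likewise observes that any cyclic (indeed any finitely generated) subgroup lies in a full subgroup $G_U$ with $U$ finite, that $G_U$ is a (virtual) retract of $G$ via the canonical retraction, and then combines Theorem~\ref{thm:gp_of_vrc} with the transitivity statement in Lemma~\ref{lem:basic_props_vr}.(iv). No issues.
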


Corollary \ref{cor:virt_spec->vrc} from the Introduction is a direct consequence of Theorem~\ref{thm:gp_of_vrc}, Lemma~\ref{lem:vr-obs}.(i) and
Lemma~\ref{lem:vrab-basics}.(a).

\section{Virtual retractions and quasi-potency}\label{sec:quasipot}
In this section we discuss an application of property \VRC{} to quasi-potency, which was originally introduced by Evans \cite{Evans} as a condition for showing that an amalgamated free product of cyclic subgroup separable groups is also cyclic subgroup separable.

\begin{defn}\label{def:quasi-pot}
Let $G$ be a group. An infinite order element $g \in G$ will be called \emph{quasi-potent} in $G$
if there exists $n \in \N$ satisfying the following property. For every $k \in \N$ there is a finite
group $M$ and a homomorphism $\psi:G \to M$ such that the order of $\psi(g)$ in $M$ is precisely $kn$.

A group $G$ will be called \emph{quasi-potent} if every infinite order element $g \in G$ is quasi-potent.
\end{defn}

The terminology from Definition \ref{def:quasi-pot} is due to Ribes and Zalesskii \cite{Rib-Zal},
but the concept itself appeared much earlier in the work of Evans \cite{Evans}, who used the terms ``$G$ has regular quotients at $g$'' and ``$G$ has regular quotients''.
It was proved independently in \cite{Evans}, \cite{Rib-Seg-Zal} and \cite{Bur-Mar} that the amalgamated product of two cyclic subgroup separable quasi-potent groups along a cyclic subgroup is again cyclic subgroup separable and quasipotent. Note that quasi-potency is important here: Rips \cite{Rips} constructed an example of an amalgamated product two LERF groups over a cyclic subgroup which is not even residually finite (finitely generated examples were later obtained by Wilson and Zalesskii in \cite{Wil-Zal}).

Another use for quasi-potency was discovered by Tang \cite{Tang}, who applied it to prove conjugacy separability for amalgamated products
of virtually free groups and virtually nilpotent groups with unique root property along cyclic subgroups.
Further results in this direction were obtained by Ribes and Zalesskii \cite{Rib-Zal} and Ribes, Segal and Zalesskii \cite{Rib-Seg-Zal}.

\begin{lemma}\label{lem:VRC->quasi-pot} Let $G$ be a group and let $g \in G$ be an element of infinite order
such that $\langle g \rangle \vr G$. Then $g$ is quasi-potent in $G$.
\end{lemma}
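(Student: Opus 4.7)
The plan is to use the retraction to build a cofinal family of finite quotients of $G$ in which the order of $g$ can be prescribed. By hypothesis there is a finite-index subgroup $K\leqs G$ containing $g$ and a retraction $\rho:K\to\langle g\rangle$ with kernel $L$, so $K=L\rtimes\langle g\rangle$. Let $N_K=\bigcap_{x\in G} xKx^{-1}$, a normal finite-index subgroup of $G$ contained in $K$, and let $d$ denote the order of $gN_K$ in the finite group $G/N_K$, i.e.\ the least positive integer with $g^d\in N_K$. I will show that $g$ is quasi-potent with $n\coloneq d$.

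For each $j\in\N$ set $L_j\coloneq L\langle g^j\rangle\lhd K$ (of index $j$ in $K$, hence of finite index in $G$) and let $H_j\lhd G$ be its normal core in $G$. The order of $g$ in the finite quotient $G/H_j$ equals the least $a\ge1$ with $x^{-1}g^ax\in L_j$ for every $x\in G$. Fix a transversal $t_1=1,t_2,\dots,t_m$ of $K$ in $G$; since $\rho$ is a homomorphism into the abelian group $\langle g\rangle$, the value $\rho(k_0^{-1}uk_0)$ depends only on $u\in K$ and not on $k_0\in K$, so the condition over all $x\in G$ reduces to $t_i^{-1}g^at_i\in L_j$ for every $i$. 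The requirement ``$t_i^{-1}g^at_i\in K$ for all $i$'' is equivalent to $g^a\in N_K$, i.e.\ $d\mid a$; writing $a=da'$ and using the identity $\rho((t_i^{-1}g^dt_i)^{a'})=a'\cdot\rho(t_i^{-1}g^dt_i)$, the remaining condition $\rho(t_i^{-1}g^{da'}t_i)\in j\Z$ becomes $j\mid a'c_i$, where $c_i\coloneq\rho(t_i^{-1}g^dt_i)\in\Z$. Note $c_1=\rho(g^d)=d$, so $C\coloneq\gcd(c_1,\dots,c_m)$ is a positive divisor of $d$, and a standard $\gcd$-$\operatorname{lcm}$ computation yields that the least valid $a'$ is $j/\gcd(j,C)$. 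Hence the order of $g$ in $G/H_j$ equals $dj/\gcd(j,C)$.

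To finish: given any $k\in\N$, specialise to $j\coloneq kC$, so that $\gcd(j,C)=C$ and the image of $g$ in the finite group $M\coloneq G/H_{kC}$ has order precisely $dkC/C=dk=nk$. This realises every multiple of $n=d$ as the order of $g$ in some finite quotient of $G$, establishing quasi-potency. The principal subtlety is locating the correct $n$: one cannot in general take $n=1$, since the $G$-conjugates $t_i^{-1}g^dt_i$ may be mapped by $\rho$ to powers of $g$ different from $g^d$, producing the extra divisibility constraints captured by $C$, and the factor $d$ absorbs precisely this obstruction.
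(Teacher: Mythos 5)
Your argument is correct, but it takes a genuinely different route from the paper. The paper's proof runs the construction of Lemma~\ref{lem:retr_onto_virt_ab->homom} to produce a homomorphism $\varphi\colon G\to P$ onto a finitely generated virtually abelian group which is injective on $\langle g\rangle$, and then simply quotes the known fact that such groups are quasi-potent (Tang, Burillo--Martino), so that $g$ is quasi-potent via the composition with suitable finite quotients of $P$. You instead work entirely inside $G$: from $K=L\rtimes\langle g\rangle$ you form the finite-index subgroups $L\langle g^{j}\rangle$ (normal in $K$ since they contain $L$ and $K/L$ is abelian), pass to their normal cores $H_j$ in $G$, and compute the exact order of $g$ in $G/H_j$ as $dj/\gcd(j,C)$, where $d$ is the order of $g$ modulo the normal core of $K$ and $C=\gcd\bigl(\rho(t_1^{-1}g^{d}t_1),\dots,\rho(t_m^{-1}g^{d}t_m)\bigr)$; taking $j=kC$ realises order exactly $dk$, so $n=d$ works. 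Your reduction to the transversal is legitimate (conjugation by elements of $K$ preserves membership in $K$ and in the normal subgroup $L\langle g^{j}\rangle$ of $K$), and the identity $\operatorname{lcm}_i\bigl(j/\gcd(j,c_i)\bigr)=j/\gcd(j,C)$ checks out prime by prime, so there is no gap. What the paper's route buys is brevity and modularity: it reuses Lemma~\ref{lem:retr_onto_virt_ab->homom} and the existing literature on quasi-potency of finitely generated virtually abelian groups. What your route buys is a self-contained elementary proof that exhibits the finite quotients explicitly and pins down a concrete admissible value of $n$ (the order of $g$ modulo the core of $K$), in effect re-proving the needed special case of the virtually abelian result by hand.
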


\begin{proof} Arguing as in Lemma \ref{lem:retr_onto_virt_ab->homom}, we can find a finitely generated virtually abelian group $P$ and a homomorphism
$\varphi:G \to P$ such that $h=\varphi(g)$ has infinite order in $P$ (residual finiteness of $G$ is not required here, as we only need
the homomorphism to be injective on $\langle g \rangle$ and not on its finite index overgroup).

Since finitely generated virtually abelian groups are quasi-potent (cf. \cite[Lemma 2.2]{Tang} or \cite[Theorem 5.5]{Bur-Mar}), there is $n \in \N$ such that for any $k \in \N$
there exist a finite group $M$ and a homomorphism $\eta: P \to M$ such that the order of $\eta(h)$ is exactly $kn$ in $M$. Hence $\psi=\eta\circ\varphi:G \to M$
is a homomorphism sending $g$ to an element of order $kn$ in $M$. Thus $g$ is quasi-potent in $G$.
\end{proof}

%Lemma \ref{lem:VRC->quasi-pot} immediately yields the following corollary.

\begin{cor}\label{cor:VRC->quasipot} Every group with \VRC{} is quasi-potent.
\end{cor}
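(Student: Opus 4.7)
The plan is very short, as this is an immediate corollary of the preceding lemma. I would start by unwinding the definitions: to show that $G$ is quasi-potent, I must verify that every infinite order element $g \in G$ is quasi-potent in the sense of Definition~\ref{def:quasi-pot}. So I fix an arbitrary such $g$ and aim to produce the integer $n \in \N$ and the required family of finite quotients.

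The key observation is that the hypothesis \VRC{} applies directly: it guarantees that the cyclic subgroup $\langle g \rangle$ is a virtual retract of $G$. This is exactly the hypothesis of Lemma~\ref{lem:VRC->quasi-pot}, which concludes that $g$ is quasi-potent in $G$. Since $g$ was arbitrary, the group $G$ itself is quasi-potent.

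There is no real obstacle here. The substantive content — namely, that having $\langle g \rangle \vr G$ for an infinite order element $g$ forces quasi-potency at $g$ — has already been carried out in Lemma~\ref{lem:VRC->quasi-pot} (via the reduction to a finitely generated virtually abelian quotient on which $g$ remains of infinite order, combined with the known quasi-potency of such groups). Consequently the proof of Corollary~\ref{cor:VRC->quasipot} reduces to a one-sentence verification that each infinite order element has its cyclic subgroup a virtual retract, which is immediate from the definition of property \VRC{}.
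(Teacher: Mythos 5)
Your proposal is correct and matches the paper's intended argument: the corollary is an immediate consequence of Lemma~\ref{lem:VRC->quasi-pot}, applied to the cyclic subgroup $\langle g \rangle$ of each infinite order element $g$, which is a virtual retract by the definition of \VRC{}. The paper indeed leaves this one-line deduction implicit, so there is nothing to add.
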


Since groups with \VRC{} are cyclic subgroup separable (Lemma \ref{lem:vr-obs}.(ii)), Corollary \ref{cor:VRC->quasipot} can be combined with
a result of Evans \cite[Theorem 3.4]{Evans} to give the following.

\begin{cor}\label{cor:amalg_over_cyclic} Let $P$ be a group with \VRC{} and let $Q$ be cyclic subgroup separable. If $R \leqslant P$ and $S \leqslant Q$ are
infinite cyclic then $P*_{R=S} Q$ is cyclic subgroup separable.
\end{cor}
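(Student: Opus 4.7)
The plan is simply to verify the hypotheses of \cite[Theorem 3.4]{Evans} for the amalgam $P *_{R=S} Q$ and then invoke that result. Evans's theorem asserts that an amalgamated free product of two cyclic subgroup separable groups along an infinite cyclic subgroup is again cyclic subgroup separable, provided a suitable quasi-potency hypothesis holds on (at least) one of the factors relative to the amalgamated subgroup.

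First I would record that both factors are already cyclic subgroup separable: $Q$ is so by assumption, while for $P$ this follows from Lemma~\ref{lem:vr-obs}.(ii), which tells us that property \VRC{} implies cyclic subgroup separability (via Lemma~\ref{lem:virt_retr->closed} applied to any infinite cyclic subgroup of $P$, after noting that $P$ is residually finite by Lemma~\ref{lem:vrc->rf}). Second, I would verify the quasi-potency ingredient on the side of $P$: since $R \leqs P$ is infinite cyclic and $P$ satisfies \VRC{}, we have $R \vr P$, so a generator of $R$ is an element of infinite order whose cyclic subgroup is a virtual retract of $P$. Lemma~\ref{lem:VRC->quasi-pot} then applies and shows that this generator is quasi-potent in $P$; equivalently, one may just invoke Corollary~\ref{cor:VRC->quasipot} to conclude that $P$ is quasi-potent as a whole.

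With both factors cyclic subgroup separable and a generator of $R$ quasi-potent in $P$, the hypotheses of \cite[Theorem 3.4]{Evans} are met, and that theorem immediately yields the cyclic subgroup separability of $P *_{R=S} Q$. No case analysis or additional argument is needed beyond citing the two earlier results of the paper (Lemma~\ref{lem:vr-obs}.(ii) and Corollary~\ref{cor:VRC->quasipot}) that translate \VRC{} into precisely the two ingredients Evans needs. The only conceptual obstacle is bookkeeping: ensuring that ``quasi-potent element'' in the sense of Definition~\ref{def:quasi-pot} matches the hypothesis Evans states in \cite{Evans} (where the condition is phrased as the factor having ``regular quotients'' at the amalgamating generator), but this is exactly the terminological identification already noted after Definition~\ref{def:quasi-pot}.
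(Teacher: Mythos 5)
Your proposal matches the paper's proof: the corollary is obtained exactly by combining Lemma~\ref{lem:vr-obs}.(ii) (so $P$ is cyclic subgroup separable) and Corollary~\ref{cor:VRC->quasipot} (quasi-potency on the $P$ side, via Lemma~\ref{lem:VRC->quasi-pot}) with Evans's result \cite[Theorem 3.4]{Evans}. Nothing is missing; this is the same argument, including the terminological identification of quasi-potency with Evans's ``regular quotients'' condition.
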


If one assumes that both $P$ and $Q$ have \VRC{}, then a theorem of Burillo and Martino \cite[Theorem 3.6]{Bur-Mar} gives an even stronger statement.

\begin{cor}\label{cor:amalg_over_virt_cyclic} Suppose that $P$, $Q$ are groups satisfying \VRC{}, and $R \leqslant P$, $S \leqslant Q$ are
isomorphic virtually cyclic subgroups. Then the amalgamated product $P*_{R=S} Q$ is cyclic subgroup separable and quasi-potent.
\end{cor}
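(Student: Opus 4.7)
The plan is to reduce the statement to Theorem 3.6 of Burillo and Martino \cite{Bur-Mar}, which upgrades the Evans-type result quoted just before Corollary~\ref{cor:amalg_over_cyclic} to amalgamations over \emph{virtually} cyclic subgroups, under the hypotheses that both factors are cyclic subgroup separable and quasi-potent, and that the amalgamated subgroup is closed in the profinite topology of each factor. All that is needed is therefore to check these three hypotheses; no new combinatorial work on the amalgamated product itself is required.

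First, because $P$ and $Q$ satisfy \VRC{}, Lemma~\ref{lem:vr-obs}.(ii) tells us that all cyclic subgroups are closed in their profinite topologies, so both $P$ and $Q$ are cyclic subgroup separable. Corollary~\ref{cor:VRC->quasipot} then supplies quasi-potency of both factors.

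Second, $R \leqs P$ and $S \leqs Q$ are virtually cyclic, hence finitely generated and virtually abelian. Here I would invoke Proposition~\ref{prop:VRC->VRAb} to conclude that $R \vr P$ and $S \vr Q$. Combined with the residual finiteness of $P$ and $Q$ (Lemma~\ref{lem:vr-obs}.(i)), Lemma~\ref{lem:virt_retr->closed} then gives that $R$ and $S$ are closed in the profinite topologies of $P$ and $Q$ respectively, which is exactly the separability of the edge subgroups required by the Burillo--Martino theorem.

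With the three hypotheses in place, \cite[Theorem 3.6]{Bur-Mar} applies directly to $G = P *_{R=S} Q$ and yields that $G$ is both cyclic subgroup separable and quasi-potent. The only potential obstacle is bookkeeping: ensuring that the precise formulation of \cite[Theorem 3.6]{Bur-Mar} indeed matches the hypotheses we have extracted. This is where Proposition~\ref{prop:VRC->VRAb} is doing the real work, since it promotes \VRC{} from individual cyclic subgroups to arbitrary finitely generated virtually abelian subgroups, and in particular to the virtually cyclic amalgamated subgroups $R$ and $S$; without that upgrade one would only have cyclic separability in the factors, which would not suffice to feed into \cite{Bur-Mar}.
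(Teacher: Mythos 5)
Your proposal is correct and follows essentially the same route as the paper: property \VRC{} gives cyclic subgroup separability (Lemma~\ref{lem:vr-obs}.(ii)) and quasi-potency (Corollary~\ref{cor:VRC->quasipot}) of $P$ and $Q$, and then \cite[Theorem 3.6]{Bur-Mar} applies directly. The only quibble is your closing claim that Proposition~\ref{prop:VRC->VRAb} does ``the real work'': the closedness of $R$ and $S$ already follows from cyclic subgroup separability alone, since a virtually cyclic subgroup is a finite union of cosets of a closed cyclic subgroup and hence closed, so that step is harmless but redundant.
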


\begin{rem} In view of Proposition \ref{prop:VRC->VRAb}, it would be natural to ask whether one can replace `virtually cyclic' by
`finitely generated virtually abelian' in Corollary \ref{cor:amalg_over_virt_cyclic}. Unfortunately this would be fruitless as there exist
amalgamated products of two finitely generated virtually abelian groups which are not residually finite (see \cite[Remark~11.6]{Leary-Min}).
\end{rem}

Combining together Corollaries \ref{cor:amalg_over_virt_cyclic} and \ref{cor:virt_spec->vrc} we obtain the following.

\begin{cor}\label{cor:virt_spec_over_virt_cyclic}
Let $P$, $Q$ be groups each of which has a finite index subgroup that embeds into a right angled Artin or Coxeter group.
Then the amalgamated product of $P$ and $Q$ along a virtually cyclic subgroup is cyclic subgroup separable.
\end{cor}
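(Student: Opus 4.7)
The plan is to simply chain together the two preceding corollaries. First, I would apply Corollary~\ref{cor:virt_spec->vrc} to each of the groups $P$ and $Q$: since each has a finite index subgroup embedding into a right angled Artin or Coxeter group, both $P$ and $Q$ possess property \VRC{}. (Note that we do not need the full strength of the hypothesis; it suffices that $P$ and $Q$ are ``virtually special'' in the sense of Haglund--Wise.)

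Next, let $R \leqs P$ and $S \leqs Q$ be the isomorphic virtually cyclic subgroups along which the amalgamation is formed. Since $P$ and $Q$ have \VRC{}, I would directly invoke Corollary~\ref{cor:amalg_over_virt_cyclic}, which asserts that $P *_{R=S} Q$ is cyclic subgroup separable (and, moreover, quasi-potent). This gives the conclusion.

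There is no real obstacle: the work has already been done in establishing the two corollaries being cited. The only thing to double-check is that the virtually cyclic amalgamation hypothesis from Corollary~\ref{cor:amalg_over_virt_cyclic} matches the hypothesis here, which it does verbatim. Thus the proof is essentially a one-line deduction, and I would present it as such in the paper.
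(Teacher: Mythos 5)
Your proposal matches the paper's own proof exactly: the paper derives this corollary by applying Corollary~\ref{cor:virt_spec->vrc} to conclude that $P$ and $Q$ have \VRC{}, and then invoking Corollary~\ref{cor:amalg_over_virt_cyclic} for the amalgam over the virtually cyclic subgroup. The deduction is correct and complete as stated.
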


\section{Virtual retracts in solvable groups}\label{sect:solv}
Proposition \ref{prop:vr_times_virt_ab_is_vr} and Lemma \ref{lem:vrab-basics} tell us that properties  \VR{} and \VRC{} are stable under taking direct product
with finitely generated virtually abelian groups, so it is natural to ask whether this can be extended to (split) extensions with finitely generated abelian kernels.
The next result provides a strong negative answer.

\begin{prop}\label{prop:vpolyc+vrc->vab} A virtually polycyclic group satisfies \VRC{} if and only if it is virtually abelian.
\end{prop}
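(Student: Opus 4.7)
The plan is to deduce both directions from results already established in the paper, using the fact that virtually polycyclic groups are both finitely generated and virtually torsion-free, and have finite first virtual Betti number.

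For the \emph{if} direction, suppose $G$ is virtually polycyclic and virtually abelian. Since polycyclic groups are finitely generated, $G$ itself is finitely generated, so $G$ is a finitely generated virtually abelian group. Then every subgroup of $G$ is a virtual retract by Corollary \ref{cor:retr_in_virt_ab_gps}, so in particular every cyclic subgroup is a virtual retract, and $G$ satisfies \VRC{}.

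For the \emph{only if} direction, suppose $G$ is virtually polycyclic and satisfies \VRC{}. I would first recall two standard facts about virtually polycyclic groups: (1) they are virtually torsion-free (every polycyclic group contains a torsion-free subgroup of finite index, so the same holds for any finite-index extension), and (2) they have finite Hirsch length $h(G)$, and for any finite index subgroup $K \leqs G$ one has $\bone(K) = \rk(K/[K,K]) \le h(K) = h(G) < \infty$, hence $\vb(G) < \infty$. Now the contrapositive of Proposition \ref{prop:VRC->inf_vbn} says that a virtually torsion-free group with \VRC{} and finite $\vb$ must be virtually abelian. Applying this to $G$ yields the desired conclusion.

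There is no serious obstacle here; the proof is a one-line invocation of Proposition \ref{prop:VRC->inf_vbn} in one direction and Corollary \ref{cor:retr_in_virt_ab_gps} in the other, once one notes the basic structural properties of virtually polycyclic groups. The only point to be slightly careful about is the uniform bound $\vb(G) \le h(G)$, which follows immediately from the fact that $\bone$ of a polycyclic group is bounded by its Hirsch length (each cyclic factor in a polycyclic series contributes at most $1$ to the $\Q$-rank of the abelianization) together with the invariance of Hirsch length under passing to finite index subgroups.
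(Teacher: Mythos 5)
Your argument is correct, but it follows a different route from the paper's own proof of the necessity direction. The paper argues directly: it picks a torsion-free polycyclic finite-index subgroup $M$ of minimal derived length $d$, then a finite-index $L\leqslant M$ with $L^{(d-1)}$ of minimal rank, and shows $d\le 1$ by retracting a finite-index subgroup $K\leqslant L$ onto an infinite cyclic subgroup of $L^{(d-1)}$, which forces $K^{(d-1)}$ to have strictly smaller rank -- a contradiction. Your deduction instead invokes Proposition~\ref{prop:VRC->inf_vbn} together with the facts that virtually polycyclic groups are virtually torsion-free and satisfy $\vb(G)\le h(G)<\infty$; this is exactly the alternative derivation the paper itself records in Remark~\ref{rem:v_polyc+VRC}.(b), and your justification of the Hirsch-length bound is sound (and the sufficiency direction via Corollary~\ref{cor:retr_in_virt_ab_gps} matches the paper). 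The trade-off: your route is shorter once Proposition~\ref{prop:VRC->inf_vbn} is available, but it relies on that betti-number machinery, whereas the paper's self-contained rank/derived-series argument needs only that abelian subgroups have finite $\Q$-rank, and hence (as noted in Remark~\ref{rem:v_polyc+VRC}.(a)) extends beyond the virtually polycyclic case, e.g.\ to strictly ascending HNN-extensions of $\Z^n$.
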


\begin{proof} The sufficiency is given by Corollary \ref{cor:retr_in_virt_ab_gps}, so we only need to prove the necessity.
So, assume that $G$ is virtually polycyclic and has \VRC{}.
Let $M$ be a torsion-free polycyclic subgroup of $G$ of finite index, whose derived length $d$ is the smallest possible
(among all such subgroups of $G$). Then for every finite index subgroup $L \leqslant M$, $L$ must also have derived length $d$, so $L^{(d-1)}$, the $(d-1)$-th member of the derived series of $L$, is a finitely generated free abelian group. Choose such an $L$ so that $L^{(d-1)}$ has the smallest possible rank.

Let us show that $d \le 1$.  Suppose, on the contrary, that $d \ge 2$, and pick any non-trivial element $g \in L^{(d-1)}$.
By the assumptions, $\langle g \rangle \vr G$, so $\langle g \rangle \vr L$ (see Lemma \ref{lem:basic_props_vr}.(i)). Thus there is a finite index subgroup
$K \leqslant L$ which retracts onto $\langle g \rangle$. It follows that the infinite cyclic subgroup $\langle g \rangle$ has trivial intersection with the derived subgroup
$[K,K]$, of $K$.

Recall that $K$ has derived length $d \ge 2$ by the assumptions, so $K^{(d-1)} \subseteq [K,K]$, hence $K^{(d-1)} \cap \langle g \rangle =\{1\}$. On the other hand,
$K^{(d-1)} \subseteq L^{(d-1)}$ as $K \subseteq L$, therefore the rank of the free
abelian group $K^{(d-1)}$ must be strictly smaller than the rank of $L^{(d-1)}$, contradicting the choice of $L$.

Thus we have shown that $d \le 1$, so $L$ is abelian and $G$ is virtually abelian.
\end{proof}

\begin{ex}\label{ex:Heisenberg} The integral Heisenberg group $H$ (the group of all $3\times 3$ unitriangular matrices with integer coefficients) is
nilpotent of class $2$, is isomorphic to a split extension of $\Z^2$ by $\Z$ and is not virtually abelian.
Therefore, by Proposition \ref{prop:vpolyc+vrc->vab},
$H$ does not satisfy \VRC{}, even though it is LERF (see \cite[Chapter 1.C, Exercise 11]{Segal}).
\end{ex}

\begin{rem}\label{rem:v_polyc+VRC}
 (a) One can note that the proof of Proposition \ref{prop:vpolyc+vrc->vab} actually applies more generally to
solvable groups which are virtually torsion-free and all of whose abelian subgroups have finite $\Q$-ranks. In particular, it shows that a strictly
ascending HNN-extension of $\Z^n$, for any $n \in \N$, cannot have \VRC{}.

(b) Proposition \ref{prop:vpolyc+vrc->vab} can also be deduced  from Proposition~\ref{prop:VRC->inf_vbn}, as the first virtual betti number of a polycyclic group is bounded by
its Hirsch length. In fact, by a much stronger result of Bridson and Kochloukova \cite[Theorem A]{Brid-Koch}, $\vb(G)<\infty$ for any finitely presented
nilpotent-by-abelian-by-finite group $G$. Thus, in view of Proposition~\ref{prop:VRC->inf_vbn}, such a group $G$ cannot have \VRC{}, provided it is
virtually torsion-free and not virtually abelian.
\end{rem}

However, in general solvable groups may satisfy \VRC{} and even \VR{} without being virtually abelian. The natural examples to look at are restricted wreath products of abelian groups. Let $A$ and  $B$ be two groups. By the definition, the \emph{restricted wreath product} $A \wr B$ is the semidirect product $A^B \rtimes B$, where
$A^B$ is the set of all functions from $B$ to $A$ with finite supports, equipped with pointwise multiplication coming from the multiplication on $A$.
And for every $b \in B$ and $f \in A^B$, $f:B \to A$, the function $f^b=bfb^{-1}$ is defined by  $f^b(x)=f(xb)$ for all $x \in B$.

The subgroup of $A \wr B$ consisting of all functions $f\in A^B$ such that $f(b)=1$ for each $b \in B\setminus\{1\}$ is clearly isomorphic to $A$. Thus both
$A$ and $B$ can be thought of as subgroups of $A \wr B$. Note that for any subgroup $D \leqslant A$ the restricted wreath product $D \wr B$ naturally embeds
into $A \wr B$, as the subgroup generated by $D^B$ and $B$.

By a result of Gruenberg \cite[Theorems 3.1,3.2]{Gruenberg}, the restricted wreath product $A \wr B$ of two residually finite groups $A$, $B$
is residually finite if and only if either $B$ is finite or $A$ is abelian.
We can use this theorem to characterize wreath products with property \VRC{} in a similar fashion.

\begin{thm}\label{thm:wreath->VRAb} Let $A$  and $B$ be groups and let $G=A \wr B$ be their restricted wreath product.
Then $G$ has \VRC{} if and only if all of the following conditions hold:
\begin{itemize}
  \item[(i)] both $A$ and $B$ have \VRC{};
  \item[(ii)] either $B$ is finite or $A$ is abelian.
\end{itemize}
\end{thm}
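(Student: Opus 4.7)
My plan is to split into necessity and sufficiency.

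For necessity, subgroups of \VRC{} groups have \VRC{} by Lemma~\ref{lem:vr-obs}(i), so $A$ and $B$, as subgroups of $G$, have \VRC{}, giving (i). Since $G$ is residually finite (Lemma~\ref{lem:vrc->rf}), Gruenberg's theorem on residual finiteness of wreath products \cite{Gruenberg} forces (ii).

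For sufficiency I would first dispose of the easy case $B$ finite: $A^B$ is a finite direct product of copies of $A$, hence has \VRC{} by Lemma~\ref{lem:vrab-basics}(b), and then $G$ has \VRC{} by Lemma~\ref{lem:vrab-basics}(a), since $|G:A^B|<\infty$. So assume $A$ is abelian and both $A, B$ have \VRC{}. By Gruenberg, $G$ is residually finite. Given $g = fb \in G$ with $f \in A^B$, $b \in B$: if $g$ has finite order then $\langle g \rangle$ is finite, hence $\vr G$ by Lemma~\ref{lem:retract_fin-ext} applied to the trivial retract of $G$. If $g$ has infinite order and the projection $\pi : G \to B$ sends $g$ to an infinite-order element, then $\pi|_{\langle g \rangle}$ is injective with $\pi(\langle g \rangle) \vr B$, so Lemma~\ref{lem:basic_props_vr}(ii) yields $\langle g \rangle \vr G$. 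Otherwise $b$ has finite order $n$ and $g^n \in A^B$ is nontrivial of infinite order; since $\langle g^n \rangle$ has finite index in $\langle g \rangle$, Theorem~\ref{thm:virt_ab-lift} reduces the problem to $\langle g^n \rangle \vr G$. Thus I reduce to showing $\langle f \rangle \vr G$ for $f \in A^B$ of infinite order.

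For this key step, let $S = \{s_1, \ldots, s_m\}$ be the support of $f$ and $a_i = f(s_i)$. Since $f$ has infinite order in $\bigoplus_{b \in B} A$, some $a_i$ (say $a_1$) has infinite order in $A$. By residual finiteness of $B$, pick a finite-index normal subgroup $B_0 \triangleleft B$ such that the cosets $s_1B_0, \ldots, s_mB_0$ are pairwise distinct, and set $K = A^B \rtimes B_0 \leqslant G$; so $|G:K|<\infty$ and $f \in K$. A routine computation for a semidirect product with abelian kernel gives $K^{\mathrm{ab}} \cong (A^B)_{B_0} \oplus B_0^{\mathrm{ab}}$, and decomposing $A^B$ into $B_0$-orbits (right cosets of $B_0$ in $B$) identifies $(A^B)_{B_0}$ with $A^T$ via the coset-sum map, where $T = B_0 \backslash B$ is finite. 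Under this identification, the image of $f$ in $K^{\mathrm{ab}}$ is $((a_1, \ldots, a_m, 0, \ldots, 0), 0)$, which has infinite order (since $a_1$ does); hence the abelianization map $\varphi : K \to K^{\mathrm{ab}}$ is injective on $\langle f \rangle$. Since $A^T$ has \VRC{} by Lemma~\ref{lem:vrab-basics}(b), $\langle (a_1, \ldots, a_m, 0, \ldots, 0) \rangle \vr A^T$, and Lemma~\ref{lem:basic_props_vr}(v) combined with the trivial fact $\{0\} \vr B_0^{\mathrm{ab}}$ yields $\langle \varphi(f) \rangle \vr K^{\mathrm{ab}}$. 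Then Lemma~\ref{lem:basic_props_vr}(ii) gives $\langle f \rangle \vr K$, and Lemma~\ref{lem:basic_props_vr}(iv) gives $\langle f \rangle \vr G$.

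The hardest part is this final sub-case, where $f \in A^B$ has infinite order. A naive retraction of $A^B$ onto a single coordinate is not $B$-invariant, so it does not extend to any finite-index subgroup of $G$. The decisive trick is the choice of $B_0$ separating $\mathrm{supp}(f)$; this forces the abelianization of $K = A^B \rtimes B_0$ to isolate each value $a_i$ into its own coordinate in the finite direct product $A^T$, where the already-established \VRC{} of finite direct products closes the argument.
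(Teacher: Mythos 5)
Your proposal is correct and takes essentially the same route as the paper's proof: necessity via Lemma~\ref{lem:vr-obs}.(i) and Gruenberg, the finite-$B$ case via Lemma~\ref{lem:vrab-basics}, and, in the key case of an infinite order $f \in A^B$, the same device of choosing a finite index subgroup $B_0 \leqslant B$ separating $\supp(f)$ and mapping to $A^T$ by coset sums -- your abelianization/coinvariants computation for $K=A^B \rtimes B_0$ is just a repackaging of the paper's explicit retraction $\eta$ of $A^B\rtimes C$ onto $A^T$ -- finishing with Lemma~\ref{lem:basic_props_vr}.(ii) and Theorem~\ref{thm:virt_ab-lift}. The only cosmetic difference is that the paper intersects the cyclic subgroup with $A^B$ and lifts at the end, whereas you first reduce to an infinite order element of $A^B$ by passing to a power (and dispose of finite order generators via Lemma~\ref{lem:retract_fin-ext}).
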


\begin{proof}
We first show the necessity. Given that $G$ has \VRC{}, we know that the same holds for $A$ and $B$, as both of them are subgroups of $G$.
Since this property implies that $G$ is residually finite (Lemma \ref{lem:vr-obs}.(i)), claim (ii) follows from Gruenberg's result \cite[Theorem~3.1]{Gruenberg}.

Thus it remains to prove the sufficiency. So, assume that $A$, $B$ have \VRC{}. If $|B|<\infty$ then $A^B$ is the direct product of finitely many copies of $A$,
so it has \VRC{} by Lemma~\ref{lem:vrab-basics}.(b).
Moreover, in this case $|G:A^B|=|B|<\infty$, consequently $G$ has \VRC{} by Lemma~\ref{lem:vrab-basics}.(a).

Therefore we can further suppose that $A$ is abelian (in view of (ii)), and so we will use the additive notation for the operations on $A$ and $A^B$.
Let $H \leq G$ be a cyclic subgroup, and let $\rho:G \to B$ be the natural retraction, with $\ker\rho=A^B$. We need to consider two cases.

\underline{Case 1:} $\rho(H)$ is a finite subgroup of $B$. Then the subgroup $F=H \cap A^B$ has finite index in $H$.
The subgroup $F$ is generated by a single function from $A^B$, so there is a finite subset $S \subseteq B$ such that  $F \subseteq A^S$ (here by $A^S$ we mean the
subgroup of $A^B$ consisting of functions that are supported on $S$).
Since $B$ is residually finite (see Lemma~\ref{lem:vr-obs}.(i)), we can find a finite index subgroup $C \leqslant B$ which has trivial intersection with the finite subset
$\{s^{-1}t \mid s,t \in S\} \subseteq B$. Observe that  $M=\rho^{-1}(C)=A^B \rtimes C$ is a finite index subgroup of $G$ containing $F$.

Note that the cosets $\{sC \mid s \in S\}$ are all distinct by the choice of $C$, so we can complete $S$ to a finite set $T \subseteq B$ of left coset representatives in
$B/C$. It  is not difficult to see that $M \cong A^T \wr C$, which retracts onto $A^T$ because $A$ is abelian
(the kernel of this retraction is the normal closure of $C$).
More explicitly, let $\eta: M \to A^T$ be the map sending $C$ to the identity and any function $f: B \to A$ to the function $f': T \to A$ defined by
$f'(t)=\sum_{c \in C} f(tc)$ for all $t \in T$. This sum makes sense since $f$ is non-trivial for only finitely many elements of $B$.

Let us check that $\eta$ is a group homomorphism. Indeed, every element of $M$ can be uniquely written in the form $fc$, where $f \in A^B$ and $c \in C$. So,
consider two elements $f_1c_1,f_2 c_2 \in M$, where $f_i \in A^B$, $c_i \in C$, $i=1,2$. Observe that for all $t \in T$ we have
\begin{equation}\label{eq:prod_of_etas}
 \bigl( \eta(f_1c_1)+ \eta(f_2c_2) \bigr)(t)=\eta(f_1c_1)(t)+ \eta(f_2c_2)(t)=f_1'(t)+f_2'(t)=\sum_{c \in C} f_1(tc)+ \sum_{c \in C} f_2(tc).
\end{equation}
On the other hand, for every $t \in T$ we have
\begin{multline}\label{eq:eta_of_prod}
 \eta\bigl( f_1c_1 \, f_2c_2\bigr)(t)= \eta\bigl( (f_1+f_2^{c_1}) (c_1c_2)\bigr)(t)  =(f_1+f_2^{c_1})'(t)=\sum_{c \in C}(f_1+f_2^{c_1})(tc)  \\
= \sum_{c \in C} f_1(tc) +\sum_{c \in C} f_2^{c_1}(tc)=\sum_{c \in C} f_1(tc) +\sum_{c \in C} f_2(tcc_1)=\sum_{c \in C} f_1(tc) +\sum_{c \in C} f_2(tc),
\end{multline}
where we used the facts that $A$ is abelian and $cc_1$ runs over all of $C$ whenever $c$ does.

Comparing the right-hand sides of \eqref{eq:prod_of_etas} and \eqref{eq:eta_of_prod}, we see that $\eta(f_1c_1)+\eta(f_2c_2)= \eta ( f_1c_1 \, f_2c_2 )$, thus
$\eta$ is a homomorphism. Finally, recall that $A^T$, as a subgroup of $A^B$, consists of functions supported on $T$ only, so for any $f \in A^T$,  $f'=f$ as $f(tc)=1$ unless $c=1$. Hence $\eta$ induces the identity map on $A^T$, in
other words $\eta:M \to A^T$ is a retraction. Consequently, $A^T \vr G$.

Now, $A^T$ satisfies \VRC{} by Lemma \ref{lem:vrab-basics}.(b), and $F \leqslant A^S \leqslant A^T$ is a cyclic subgroup.
Therefore
$F \vr A^T$, so $F \vr G$ by Lemma~\ref{lem:basic_props_vr}.(iv). Recall that $A$ and $B$ are residually finite (by Lemma~\ref{lem:vr-obs}.(i)) and $A$ is abelian,
so, according to \cite[Theorem~3.2]{Gruenberg} $G=A \wr B$ is residually finite. Since $|H:F|<\infty$ we can apply Theorem~\ref{thm:virt_ab-lift} to conclude that
$H \vr G$ in Case 1.

\underline{Case 2:} $\rho(H)$ is an infinite subgroup of $B$. It follows that $H$ is infinite cyclic and the restriction of $\rho$ to $H$ is necessarily injective.
By the assumptions, $\rho(H) \vr B$, therefore we can apply Lemma \ref{lem:basic_props_vr}.(ii) to deduce that $H \vr G$ in Case 2.

We have established the sufficiency, so the proof of the theorem is complete.
\end{proof}

Theorem \ref{thm:wreath->VRAb} together with Corollary~\ref{cor:retr_in_virt_ab_gps} tell us that for any finitely generated abelian group $A$,
the wreath product $G=A \wr \Z$ has property \VRC. Moreover, such $G$ is known to be LERF (see \cite[Proposition 3.19]{Cornulier}),
so it is natural to ask whether $G$ also satisfies the stronger property \VR{}. The remainder of the section is devoted to giving a precise answer to this question.
The following result is due to Davis and Olshanskii \cite{Dav-Ols}.

\begin{lemma}\label{lem:Dav-Olsh} Suppose that $G={\Z_p}^k \wr \Z$, where $\Z_p$ is the cyclic group of prime order $p$ and $k \in \N$.
Then $G$ has \VR{}. Moreover, if $H \leqslant G$ is a finitely generated subgroup that is not contained in the normal subgroup $W=({\Z_p}^k)^\Z \lhd G$
then there exists $N \leqslant W$ such that
$N$ is normalized by $H$, $N \cap H$ is trivial and $|G:HN|<\infty$.
\end{lemma}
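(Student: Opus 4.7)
The plan is to prove the moreover statement first; property \VR{} then follows immediately. Indeed, if $H \leqslant G$ is finitely generated and $H \not\subseteq W$, the subgroup $N$ produced by the moreover part witnesses $H \vr G$ via Remark~\ref{rem:equiv_virt_retr}; while if $H \subseteq W$, then $H$ is a finitely generated subgroup of the elementary abelian $p$-group $W = (\Z_p^k)^\Z$, hence $H$ is finite, so $H \vr G$ by Lemma~\ref{lem:retract_fin-ext} applied to the trivial retraction $G \to \{1\}$ (using that $G$ is residually finite by Gruenberg's theorem, as $\Z_p^k$ is abelian and residually finite).

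Suppose now that $H \leqslant G$ is finitely generated with $H \not\subseteq W$. Let $\pi : G \to \Z$ be the natural projection with kernel $W$, let $t$ generate the acting copy of $\Z$, and write $\pi(H) = m\Z$ with $m \geqslant 1$. Set $V := H \cap W$. Any $g \in H$ with $\pi(g) = m$ has the form $g = f t^m$ with $f \in W$, and since $W$ is abelian, $g w g^{-1} = t^m w t^{-m}$ for all $w \in W$. Thus $H$-conjugation on $W$ factors through the $t^m$-action, and a subgroup of $W$ is normalized by $H$ precisely when it is invariant under $t^m$-conjugation.

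The central algebraic observation is that $R := \F_p[t^m, t^{-m}]$ is a PID (a localization of $\F_p[s]$ at $s = t^m$), and $W \cong \F_p[t, t^{-1}]^k$ is a \emph{free} $R$-module of rank $mk$ with basis $\{t^j e_i : 0 \leqslant j < m,\; 1 \leqslant i \leqslant k\}$, where $e_1, \dots, e_k$ is the standard $\F_p[t, t^{-1}]$-basis. Since $R$ is Noetherian, $V$ is automatically finitely generated as an $R$-submodule of $W$. By the structure theorem (Smith normal form) for submodules of finitely generated free modules over a PID, I obtain an $R$-basis $f_1, \dots, f_{mk}$ of $W$ and non-zero elementary divisors $d_1 \mid d_2 \mid \cdots \mid d_r$ in $R$ (with $r \leqslant mk$) such that $V = \bigoplus_{i=1}^{r} R\, d_i f_i$. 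Setting
\[
N \;:=\; \bigoplus_{i=r+1}^{mk} R f_i
\]
produces an $R$-submodule of $W$, hence a subgroup normalized by $H$, with $V \cap N = 0$ and
\[
W/(V+N) \;\cong\; \bigoplus_{i=1}^{r} R/(d_i).
\]
The latter is finite because every non-zero element of $R = \F_p[s, s^{-1}]$ is a unit times some $u(s) \in \F_p[s]$ with $u(0) \neq 0$, and $R/(u) \cong \F_p[s]/(u(s))$ has order $p^{\deg u}$. Consequently $[G : HN] = m \cdot [W : V+N] < \infty$, as required.

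The main subtlety — really the only conceptual point — is identifying the correct module structure: one must view $W$ as a free module over $R = \F_p[t^m, t^{-m}]$ rather than over the more obvious $\F_p[t, t^{-1}]$, because only the $t^m$-action is available from $H$-conjugation. Once this setup is in place, the Smith normal form produces the $H$-invariant complement $N$ essentially automatically, and the finiteness of the quotients $R/(d_i)$ is a brief calculation using the structure of units in a Laurent polynomial ring over $\F_p$.
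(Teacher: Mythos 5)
Your proof is correct, but it takes a genuinely different route from the paper. The paper does not argue from scratch: it treats the case $H\subseteq W$ exactly as you do (finite subgroup of a residually finite group, Lemma~\ref{lem:retract_fin-ext}), and for $H\not\subseteq W$ it picks $fb^l\in H$, passes to the index-$l$ subgroup $G_l=\langle W, fb^l\rangle\cong {\Z_p}^{kl}\wr\Z$, and then quotes Davis--Olshanskii (their Lemmas 12.4, 12.5 and 10.3) to conclude that $H$ has finite index in a retract of $G_l$; the ``moreover'' clause is then extracted from the \emph{proof} of the cited Lemma 10.3, where $N$ appears as the kernel of the retraction. You instead give a self-contained argument: identify $W$ with $\F_p[t,t^{-1}]^k$, observe that $H$-conjugation on $W$ factors through $t^m$ where $\pi(H)=m\Z$, view $W$ as a free module of rank $mk$ over the PID $R=\F_p[t^m,t^{-m}]$, and apply the stacked-basis (Smith normal form) theorem to $V=H\cap W$ (an $R$-submodule, since $H$ normalizes it) to produce the complement $N$; the finiteness of $[W:V+N]$ follows from the finiteness of $R/(d)$ for $d\neq 0$, and $[G:HN]=m\,[W:V+N]$. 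All the steps check out (including $HN\cap W=V+N$ and $H\cap N\subseteq V\cap N=0$). What your approach buys is independence from the Davis--Olshanskii paper and an explicit description of $N$, which makes the ``moreover'' statement transparent rather than something read off from someone else's proof; what the paper's approach buys is brevity and the observation, reused elsewhere, that $\langle W,fb^l\rangle$ is again a wreath product of the same type. Note also that your module-theoretic setup is essentially the mechanism underlying the cited lemmas, so the two arguments are cousins, but relative to what is written in the paper yours is a genuinely different, complete proof.
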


\begin{proof} Let $H$ be a finitely generated subgroup of $G$. If $H \subseteq W$ then $H$ is abelian and finite, so $H \vr G$
by Lemma~\ref{lem:retract_fin-ext}. Otherwise, $fb^l \in H$  for some $f \in W$
and $l \in \N$, where $b$ denotes the generator of the acting infinite cyclic group (i.e., $G=W \rtimes \langle b \rangle$). In this case the subgroup
$G_l=\langle W, fb^l \rangle \leqslant G$ has index $l$ in $G$ and is naturally isomorphic to the restricted wreath product ${\Z_p}^{kl} \wr \Z$
(cf. \cite[Lemmas 12.4 and 12.5]{Dav-Ols}). Now, \cite[Lemma 10.3]{Dav-Ols} implies that $H$ has finite index in a retract $F$ of $G_l$. Hence $H \vr G$ by
Lemma~\ref{lem:basic_props_vr}.(iv).

The last statement is an immediate consequence of the proof of \cite[Lemma 10.3]{Dav-Ols}
($N$ is the kernel of the retraction $G_l \to F$; $N \subseteq W$ by construction, as it is the normal closure in $G_l$ of finitely many elements from $W$).
\end{proof}

In fact, in \cite[Theorem 1.2]{Dav-Ols} Davis and Olshanskii studied the distortion of subgroups in $G=A \wr \Z$, where $A$ is finitely generated and abelian, and showed that
all finitely generated subgroups of $G$ are undistorted if and only if $A$ is finite. Thus, in view of Remark \ref{rem:vr->undist}, $G$ cannot have \VR{} if $A$ is infinite.
Lemma~\ref{lem:Dav-Olsh} above shows that when $A$ is finite the situation is more interesting, and the next lemma introduces an obstruction not coming from distortion.

\begin{lemma}\label{lem:Z_pm_wr_Z} Let $G=A \wr B$, where $B$ is infinite cyclic  and  $A=\Z_{p^m}$ is the cyclic group of order $p^m$ such that
$p$ is a prime and $m \ge 2$. Then $G$ does not have \VR.
\end{lemma}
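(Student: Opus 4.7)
The plan is to exhibit a specific two-generator subgroup $H \leqs G$ that is not a virtual retract. The natural candidate is $H := \langle b, f \rangle$, where $b$ is a generator of $B$ and $f \in W := A^B$ is the function with $f(1) = pa$ and $f(x) = 0$ for $x \ne 1$ (here $a$ is a generator of $A = \Z_{p^m}$). The $\langle b \rangle$-conjugates of $f$ generate $pW = (pA)^B$ as an abelian subgroup of $W$, so $H = pW \rtimes \langle b \rangle \cong \Z_{p^{m-1}} \wr \Z$. The choice of $H$ is motivated by the fact that $pA$ is not a direct summand of $A$ when $m \geq 2$, so any attempted retraction onto $H$ should struggle on the $W$-part.

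Suppose, for contradiction, that $H \vr G$. By Remark~\ref{rem:equiv_virt_retr}, there exist $K, N \leqs G$ with $H \subseteq K$, $|G:K| < \infty$, $H \cap N = \{1\}$, $K = HN$, and $N$ normalized by $H$; let $\rho : K \to H$ be the associated retraction with $\ker\rho = N$. Since $b \in K$ and $G = W \rtimes \langle b \rangle$, it follows that $K = M \rtimes \langle b \rangle$, where $M := K \cap W$ is a $b$-invariant subgroup of $W$ of finite index containing $pW$. The key step --- and the main obstacle --- is to show that $\rho(M) \subseteq pW$. For this I would establish that any abelian subgroup $C$ of $H = pW \rtimes \langle b \rangle$ not contained in $pW$ must be cyclic: if $C$ contains an element $pu \cdot b^j$ with $j \ne 0$, then any $pu' \in C \cap pW$ commutes with $pu \cdot b^j$, which forces $b^j \cdot pu' = pu'$; since $pu'$ has finite support in $B$ and $j \ne 0$, this gives $pu' = 0$. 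Hence $C \cap pW = \{0\}$ and $C$ embeds into $\langle b \rangle$. Since $\rho(M)$ is abelian and contains the non-cyclic group $\rho(pW) = pW$, we must have $\rho(M) \subseteq pW$.

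Granted $\rho(M) \subseteq pW$, the map $\rho|_M : M \to pW$ is an abelian-group retraction, so $M = pW \oplus (N \cap W)$. To finish the argument, I would note that for any $w \in N \cap W$, the element $p^{m-1} w$ lies in both $pW$ (as $p^{m-1}A = A[p] \subseteq pA$, using $m \geq 2$) and $N \cap W$; the direct sum forces $p^{m-1}w = 0$, so $w \in W[p^{m-1}] = (pA)^B = pW$. Combined with $pW \cap (N \cap W) = \{0\}$, this gives $N \cap W = \{0\}$, whence $M = pW$, contradicting $[W:M] < \infty$ since $W/pW \cong \bigoplus_{x \in B} \Z_p$ is infinite. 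The remainder of the argument, once the abelian-subgroup analysis of $H$ is in place, is elementary module theory over $\Z_{p^m}$.
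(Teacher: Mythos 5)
Your proof is correct, and it uses the same witness subgroup as the paper: $H=\langle b, f\rangle$ with $f$ the delta function at $1$ taking the value $p$, so that $H\cap W=pW$; the arithmetic heart is also the same, namely that for $m\ge 2$ every non-trivial element of $W$ either lies in $pW$ or has a non-trivial multiple in $pW$, so $N\cap W$ cannot meet $W$ non-trivially without violating $H\cap N=\{1\}$. Where you genuinely diverge is in how the contradiction is assembled. The paper works directly with the $(K,N)$ formulation of Remark~\ref{rem:equiv_virt_retr}: since $|G:H|=\infty$, the subgroup $N$ is infinite, and taking a commutator of a suitable element of $N$ with $h$ produces a \emph{non-trivial} element $g\in N\cap W$; then either $g\in pW$ or $pg$ is a non-trivial element of $pW\cap N$, an immediate contradiction. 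You instead use the retraction $\rho:K\to H$ itself: your abelian-subgroup dichotomy in $H=pW\rtimes\langle b\rangle$ (any abelian subgroup not inside $pW$ is cyclic) forces $\rho(M)\subseteq pW$ for $M=K\cap W$, giving the decomposition $M=pW\oplus(N\cap W)$; the torsion argument then shows $N\cap W=\{0\}$, so $M=pW$, contradicting $|W:M|<\infty$. Your route is somewhat longer and needs the extra structural lemma about abelian subgroups of $H$, but it yields slightly more information (it pins down exactly what the image of a putative retraction restricted to $K\cap W$ can be), whereas the paper's commutator trick gets to the contradiction faster and never invokes the retraction homomorphism, only the normalized complement $N$. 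Both arguments are complete and correct.
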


\begin{proof} We will think of $\Z_{p^m}$ as the set of residues $\{\overline{0},\overline{1},\dots,\overline{p^m-1}\}$ modulo $p^m$.
By definition, $G=W \rtimes B$, where $B=\langle b \rangle$ is the infinite cyclic group (under multiplication) and
$W$ is the set of all functions $f: B \to \Z_{p^m}$ with finite supports, under modular addition.
Let $h: B \to A$ be the function defined by $h(1)=\overline{p}$ and $h(x)=\overline{0}$ if $x \in B \setminus\{1\}$, and set $H=\langle b,h \rangle \leqslant G$. We will prove that $H$ is not a virtual retract of $G$.

It is easy to see that $H \cap W=pW$, where $pW=\{pf \mid f \in W\}$.
Therefore $W/(H \cap W) \cong  {\Z_p}^\Z$ is infinite, so $H$ has infinite index in $G$.

Arguing by contradiction, suppose that $H \vr G$. Then there exists a subgroup $N \leqslant G$, which is normalized by $H$ and has trivial intersection with it,
and such that $|G:HN|< \infty$.
Since $|G:H|=\infty$, $N$ must be infinite,
so there is a non-trivial element $fc \in N$, where $f \in W$ and $c \in B$. If $c \neq 1$, then the commutator $[fc,h]=h^ch^{-1}$
will be a non-trivial element from $N \cap W$. If $c=1$ then $fc=f \in N \cap W$ will be non-trivial. Thus, there must
exist at least one non-trivial element $g \in N \cap W$.

Since $W \cong {\Z_{p^m}}^\Z$  and $H \cap W=pW$, either $g \in H \cap W$ or $g$ has order $p^m$. In the latter case
$pg \in H \cap W$ has order $p^{m-1}$, hence it is non-trivial as $m \ge 2$. In any case we found a non-trivial element from $N \cap H$, contradicting our assumption.
Therefore $H$ cannot be a virtual retract of $G$; thus $G$ does not have \VR.
\end{proof}

We will call an abelian group \emph{semisimple} if it is the direct sum of cyclic groups of prime order. Thus, for example, ${\Z_2}^2\oplus\Z_3$ is semisimple, while
$\Z_4$ is not.

\begin{thm}\label{thm:vr_for_wreath_with_Z} Suppose that $G=A \wr \Z$, where $A$ is a finitely generated abelian group.
Then $G$ satisfies property \VR{} if and only if $A$ is semisimple.
\end{thm}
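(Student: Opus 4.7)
For the necessity, suppose $A$ is not semisimple. By the structure theorem for finitely generated abelian groups, either $A$ has an infinite cyclic summand or $A$ contains a cyclic subgroup of order $p^m$ with $m \ge 2$. In the former case, Davis and Olshanskii's \cite[Theorem~1.2]{Dav-Ols} (cited just after Lemma~\ref{lem:Dav-Olsh}) gives a distorted finitely generated subgroup of $G$, which cannot be a virtual retract by Remark~\ref{rem:vr->undist}. In the latter case, $\Z_{p^m}\wr\Z$ embeds in $G$, and since $\Z_{p^m}\wr\Z$ fails \VR{} by Lemma~\ref{lem:Z_pm_wr_Z}, so does $G$ by Lemma~\ref{lem:vr-obs}.(i).

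For the sufficiency, assume $A$ is semisimple. Being finitely generated, $A$ is finite, and the primary decomposition writes $A = \bigoplus_p P_p$ over the (finitely many) primes dividing $|A|$, with $P_p \cong \Z_p^{k_p}$. Accordingly $W = A^\Z = \bigoplus_p W_p$ where $W_p = P_p^\Z$, and $G$ is residually finite by \cite[Theorem~3.2]{Gruenberg}. Let $H \le G$ be finitely generated and let $\pi : G \to \Z$ denote the natural projection with kernel $W$. If $\pi(H) = \{0\}$ then $H \le W$ is finite, so $H \vr G$ by Lemma~\ref{lem:retract_fin-ext} applied to the trivial retract. Otherwise $\pi(H) = \ell\Z$ for some $\ell \in \N$; picking $fb^\ell \in H$ (with $b$ a generator of the acting $\Z$), the subgroup $G_\ell = \langle W, fb^\ell\rangle$ has index $\ell$ in $G$, contains $H$, and is isomorphic to $A^\ell \wr \Z$ by \cite[Lemmas~12.4 and~12.5]{Dav-Ols}. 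Since $A^\ell$ is again finite and semisimple, we may replace $G$ with $G_\ell$ to assume $\pi(H) = \Z$.

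The crux of the argument is to combine Lemma~\ref{lem:Dav-Olsh} across the primary components. For each prime $p$ let $\rho_p: G \to G_p = P_p \wr \Z = W_p \rtimes \Z$ be the retraction acting as the identity on $W_p$ and on the acting $\Z$ and killing $W_q$ for $q \ne p$. Then $H_p = \rho_p(H) \le G_p$ is finitely generated and not contained in $W_p$, so Lemma~\ref{lem:Dav-Olsh} produces $N_p \le W_p$ that is normalized by $H_p$, intersects $H_p$ trivially, and satisfies $|G_p : H_p N_p| < \infty$. I claim that $N = \bigoplus_p N_p \le W$ witnesses $H \vr G$ via Remark~\ref{rem:equiv_virt_retr}. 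Normalization by $H$ holds because conjugation in $G$ preserves each $W_p$ and agrees on $W_p$ with conjugation by $\rho_p(h)$, so $hN_ph^{-1} = \rho_p(h) N_p \rho_p(h)^{-1} = N_p$. Triviality of $H \cap N$ follows by projecting an element of $H \cap N \subseteq W$ onto each $W_p$ and using $H_p \cap N_p = \{1\}$.

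The main obstacle is the finite-index condition $|G:HN|<\infty$. Since $\pi(HN) = \Z$ one has $G = W \cdot HN$, and hence $|G:HN| = |W : H_0 N|$ where $H_0 = H \cap W$. Here semisimplicity is essential: it forces $W = \bigoplus_p W_p$ to be a direct sum of $\Z$-invariant $p$-primary subgroups, yielding the parallel decompositions $H_0 = \bigoplus_p (H_0 \cap W_p)$ and $W/(H_0 N) \cong \bigoplus_p W_p/\bigl((H_0 \cap W_p) N_p\bigr)$. An identical calculation inside each $G_p$ shows $H_p \cap W_p = H_0 \cap W_p$ and $|W_p : (H_0 \cap W_p) N_p| = |G_p : H_p N_p| < \infty$, so the direct sum is finite and the proof is complete.
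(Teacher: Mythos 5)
Your proof is correct and follows essentially the same route as the paper: decompose $W=A^{\Z}$ into its primary components, apply Lemma~\ref{lem:Dav-Olsh} to each projection $\rho_p(H)\leqslant P_p\wr\Z$, and assemble $N=\oplus_p N_p$, with your preliminary passage to the index-$\ell$ subgroup $\langle W,fb^\ell\rangle\cong A^\ell\wr\Z$ being only a bookkeeping variant of the paper's direct index computation with $b^\ell$. (A cosmetic remark: the primary decompositions of $W$ and of $H\cap W$ need only that $A$ is finite; semisimplicity enters solely through the applicability of Lemma~\ref{lem:Dav-Olsh} to each component.)
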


\begin{proof} Let us start with proving the necessity. If $A$ is infinite, then $G$ contains finitely generated distorted subgroups by
\cite[Theorem~1.2]{Dav-Ols}, so it does not satisfy \VR{} by
Remark \ref{rem:vr->undist} (alternatively, one can argue that $\Z \wr \Z$ does not have \VR{} similarly to Lemma \ref{lem:Z_pm_wr_Z}, and
since it embeds in $G$ when $A$ is infinite, $G$ cannot have \VR{} either).

Thus we can suppose that $A$ is finite. Then it decomposes as a direct sum of cyclic subgroups of prime-power orders. If one of these subgroups is isomorphic to
$\Z_{p^m}$, for some prime $p$ and $m \ge 2$, then $G$ will contain a copy of $\Z_{p^m} \wr \Z$.
The latter does not have \VR{} by Lemma \ref{lem:Z_pm_wr_Z}, hence $G$ will not have \VR{} either (see Lemma \ref{lem:vr-obs}.(i)).
Therefore, if $G$ satisfies \VR{}, $A$ must be a finite semisimple abelian group.

It remains to prove the sufficiency, so assume that $A=\oplus_{i=1}^n A_i$, where $A_i\cong {\Z_{p_i}}^{k_i}$, $p_1,\dots,p_n$ are pairwise distinct primes and
$k_1,\dots,k_n \in \N$. Let $B$ denote the infinite cyclic acting group for the decomposition of $G$ as a wreath product, i.e., $G=A \wr B$.
Observe that the group $W=A^B$ also splits as a direct sum $W=\oplus_{i=1}^n W_i$, where $W_i$ is the $p_i$-torsion subgroup of $W$; in other words, $W_i={A_i}^B$ is the normal closure of $A_i$ in $G$, $i=1,\dots,n$.

Let $H \leqslant G$ be a finitely generated subgroup. If $H \subseteq W$ then $H$ is abelian and finite, so $H \vr G$ (for example, by
Lemma~\ref{lem:retract_fin-ext} and \cite[Theorem~3.1]{Gruenberg}).
Thus we can further assume that the image of $H$ under the natural retraction $G \to B$ is $\langle b^l \rangle$ for some $l \in \N$, where $b$ is a generator of $B$.
Consequently, $fb^l  \in H$, for some $f \in W$.

Denote $M=H \cap W$, and observe that
$H\cong M \rtimes \langle fb^l\rangle$ and $M=\oplus_{i=1}^n M_i$, where $M_i=M \cap W_i$, $i=1,\dots,n$
(this holds because every element $g \in M$ decomposes as a sum of elements of orders $p_1,\dots,p_n$ from $\langle g \rangle \leqslant M$, and every element of
order $p_i$ in $W$ belongs to $W_i$, $i=1,\dots,n$). For each $i \in \{1,\dots,n\}$ set $G_i=\langle W_i,b \rangle \leqslant G$.
Then $G_i \cong A_i \wr B \cong {\Z_{p_i}}^{k_i}\wr \Z$, and there is a natural retraction $\rho_i:G \to G_i$ whose kernel is the sum of $W_j$ for all $j\neq i$.

Let $f_i=\rho_i(f) \in W_i$, then $\rho_i(fb^l)=f_ib^l$, $i=1,\dots,n$. Observe that for each $i=1,\dots,n$, $H_i=\rho_i(H) = M_i \rtimes \langle f_ib^l \rangle$ is a finitely generated subgroup of $G_i$, which is not contained in $W_i$. So,
by Lemma~\ref{lem:Dav-Olsh}, there exists $N_i \leqslant W_i$ which is normalized by $H_i$, $H_i \cap N_i$ is trivial and $|G_i:H_i N_i|<\infty$.
It follows that $N_i$ is normalized by $f_ib^l \in H_i$, and hence it is also normalized by $b^l$, as $f_i \in W_i$ and $W_i$ is abelian.
The remaining properties of $N_i$ imply that $N_i \cap M_i$ is trivial and $|W_i:(M_i+N_i)|<\infty$, $i=1,\dots, n$.

Now, set $N=\langle N_1,\dots,N_n\rangle=N_1+\dots +N_n \leqslant W$. Then $N$ is normalized by $b^l$, hence it is also normalized by $\langle M, fb^l \rangle=H$.
The construction also implies that the intersection $M \cap N$ is trivial and $M+N$ has finite index in $W$. Therefore $H \cap N$ is trivial and
the subgroup $HN=(M+N) \rtimes \langle fb^l \rangle$ has finite index in $W \rtimes \langle fb^l \rangle=W \rtimes \langle b^l \rangle$, which has index $l$
in $G=W \rtimes \langle b \rangle$. Thus $|G:HN|<\infty$, so $H \vr G$. Therefore  $G$ satisfies \VR{}, as claimed.
\end{proof}

We are now going to prove Proposition \ref{prop:f_i_overgp-not_VR} from the Introduction.
\begin{proof}[Proof of Proposition \ref{prop:f_i_overgp-not_VR}]
The first claim follows from the work of Davis and Olshanskii in \cite{Dav-Ols}: see Lemma \ref{lem:Dav-Olsh}. So, it remains to prove the second claim.

By definition, $G=A \wr B$, where $A={\Z_2}^2$ and $B=\langle b \rangle$ is infinite cyclic. Consider the automorphism $\alpha: A \to A$ given by $\alpha\bigl((\overline{1},\overline{0})\bigr)=(\overline{1},\overline{0})$
and $\alpha\bigl((\overline{0},\overline{1})\bigr)=(\overline{1},\overline{1})$.
Evidently we can extend $\alpha$ to an automorphism of $G$ by defining $\alpha(b)=b$.
It is easy to see that $\alpha$ has order $2$ in $Aut(G)$. Set $\tilde G=G \rtimes \langle \alpha \rangle$ and
$H=\C_{\tilde G}(\alpha) \leqslant \tilde G$. Then $|\tilde G:G|=2$ and $H=\langle (\overline{1},\overline{0}),b, \alpha \rangle \cong (\Z_2 \wr \Z)\times \Z_2$.

Suppose  that $H \vr \tilde G$. Then there exists $N \leqslant \tilde G$ which is normalized by $H$, has trivial intersection with $H$ and satisfies $|\tilde G:HN|<\infty$.
Of course we can replace $N$ with $N \cap G$ to assume that $N \subseteq G$. Clearly $|\tilde G:H|=\infty$, so $N$ must contain at least one non-trivial element $fc$,
where $f \in A^B$ and $c \in B$. If $c \neq 1$ then the commutator of $fc$ with $(\overline{1},\overline{0}) \in A \cap H$ is non-trivial and belongs to $A^B$.
On the other hand, if $c=1$ in $B$ then $f \in N \cap A^B$ must be non-trivial. Thus in any case there must exist a non-trivial function $g \in N \cap  A^B$.

Note that $g \notin \C_{\tilde G}(\alpha)= H$ as $H \cap N$ is trivial, so the commutator $[\alpha,g]$ must be a non-trivial element of $N \cap A^B$ ($[\alpha,g] \in N$
because $N$ is normalized by $\alpha \in H$). However, for every $x \in B$ we have $[\alpha,g](x)=[\alpha, g(x)]$ which can either equal to $(\overline{0},\overline{0})$ or to
$(\overline{1},\overline{0})$ in $A$ (this can be checked directly or by noticing that $\langle A, \alpha\rangle$ is isomorphic to the dihedral group of order $8$,
whose derived subgroup is the $2$-element subgroup generated by $(\overline{1},\overline{0})$). Thus $[\alpha,g]$ is a non-trivial element from
$N \cap\langle (\overline{1},\overline{0})\rangle^B \subseteq N \cap  H$, which contradicts the triviality of $H \cap N$.

Therefore $H$ cannot be a virtual retract of $\tilde G$, so $\tilde G$ does not have \VR{}.
\end{proof}

\section{Virtual free factors of virtually free groups}\label{sec:vf}
The goal of this section is to establish necessary and sufficient criteria for determining whether a finitely generated subgroup of a finitely generated virtually free
groups is a free factor of a finite index subgroup. Our approach uses actions on trees.
More concretely, by a well-known theorem of Karrass, Pietrowski and Solitar \cite{KPS}, a finitely generated group $G$ is virtually free if and only if it
decomposes as the fundamental group of a finite graph of groups with finite vertex and edge groups.
The action of $G$ on the corresponding Bass-Serre tree gives rise to the following statement.

\begin{lemma}[{\cite[Ch. IV, Corollary 1.9]{D-D}}]\label{lem:virt_free_gp_act_on_trees} Every finitely generated virtually free group admits a cocompact action on
a tree  with finite vertex (and edge) stabilizers.
\end{lemma}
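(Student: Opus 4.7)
The plan is to derive the lemma as a direct consequence of the Karrass--Pietrowski--Solitar theorem together with standard Bass--Serre theory, both of which are referenced in the preceding paragraph. Let $G$ be a finitely generated virtually free group.

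First, I would invoke the theorem of Karrass, Pietrowski and Solitar \cite{KPS} to write $G$ as the fundamental group of a finite graph of groups $\mathcal{G}$ whose vertex groups (and hence edge groups) are all finite. Let $\mathcal{T}$ denote the Bass--Serre tree associated to this splitting, as constructed in \cite[Chapter~I.5]{Serre} or \cite[Chapter~I.3]{D-D}. Then $G$ acts on $\mathcal{T}$ simplicially and without edge inversions, with the property that the vertex and edge stabilizers of the action on $\mathcal{T}$ are precisely the $G$-conjugates of the vertex and edge groups of $\mathcal{G}$ (cf. the Structure Theorem \cite[Chapter~I.4, Theorem~4.1]{D-D}).

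Next, I would verify the two conclusions. Since every vertex and edge group of $\mathcal{G}$ is finite by construction, every vertex and edge stabilizer in $\mathcal{T}$ is finite, being conjugate to one of these groups in $G$. Cocompactness of the action is immediate from the construction of the Bass--Serre tree: the quotient $G \backslash \mathcal{T}$ is canonically isomorphic to the finite underlying graph of $\mathcal{G}$, so only finitely many $G$-orbits of vertices and edges occur.

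The only ``hard part'' is really just the appeal to \cite{KPS}, which is a deep result; once it is in hand, the lemma reduces to quoting standard facts about Bass--Serre trees. In fact, in the Dicks--Dunwoody reference \cite[Chapter~IV, Corollary~1.9]{D-D} the statement is proved in essentially the above manner, and one could alternatively bypass \cite{KPS} by using the Dunwoody accessibility machinery developed earlier in \cite{D-D} to construct such an action directly.
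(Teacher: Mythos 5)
Your proposal is correct and follows essentially the same route as the paper, which likewise deduces the lemma from the Karrass--Pietrowski--Solitar characterization of finitely generated virtually free groups as fundamental groups of finite graphs of finite groups, together with the standard properties of the associated Bass--Serre tree (finite stabilizers and finite quotient graph). Nothing is missing.
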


The next lemma is an application of M. Hall's result \cite{M-Hall} stating that free groups are LERF.

\begin{lemma}\label{lem:sep_subtree} Suppose that $G$ is a virtually free group acting on a tree $\cT$ with finite vertex stabilizers.
Let $F \leqslant G$ be a finitely generated subgroup and let $\cX \subseteq \cT$ be an $F$-invariant $F$-cocompact subtree of $\cT$.
Then there exists a finite index subgroup
$K \leqslant G$ such that $F \subseteq K$ and for any $g \in K$, $g \notin F$, one has $g\cX \cap \cX = \emptyset$.
\end{lemma}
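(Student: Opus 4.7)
The plan is to combine the fact that every virtually free group is LERF with a careful analysis of the set
\[
D = \{g \in G \mid g\cX \cap \cX \neq \emptyset\} \subseteq G.
\]
First I would note that $G$, being virtually free, is LERF: free groups are LERF by M.~Hall's theorem, and LERF is known to be preserved under passage to finite index overgroups (a classical observation of Scott). Since $F$ is finitely generated, it follows that $F$ is closed in the profinite topology on $G$.

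Next I would show that $D$ is a finite union of $(F,F)$-double cosets in $G$. Since $F\cX=\cX$, the set $D$ is invariant under left and right multiplication by $F$. Using the $F$-cocompactness of $\cX$, pick representatives $v_1,\dots,v_n$ of the finitely many $F$-orbits on the vertex set of $\cX$. For any $g \in D$, the subtrees $g\cX$ and $\cX$ share at least one vertex, so there exist vertices $u,w$ of $\cX$ with $gu=w$. Writing $u = f_1 v_i$ and $w = f_2 v_j$ for some $f_1,f_2 \in F$ and some indices $i,j$ yields
\[
g \in f_2 \cdot \{h \in G \mid hv_i = v_j\} \cdot f_1^{-1}.
\]
The middle set is either empty (when $v_i$ and $v_j$ lie in distinct $G$-orbits) or a coset of the finite stabilizer $\St_G(v_i)$, and in either case finite. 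Ranging over the finitely many pairs $(i,j)$ shows that $D = \bigcup_{k=1}^N F g_k F$ for some finite list of elements $g_k \in G$. Each double coset $F g_k F$ either equals $F$ (when $g_k \in F$) or is disjoint from $F$; discarding the former, we are left with finitely many $g_1,\dots,g_m \in G \setminus F$ such that $D \setminus F = \bigsqcup_{k=1}^m F g_k F$.

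For each such $g_k$, closedness of $F$ in the profinite topology supplies a finite index subgroup $L_k \leqslant G$ with $F \subseteq L_k$ and $g_k \notin L_k$. Setting $K = \bigcap_{k=1}^m L_k$ produces a finite index subgroup of $G$ containing $F$. To verify the conclusion, suppose $g \in K$ satisfies $g\cX \cap \cX \neq \emptyset$; then $g \in F g_k F$ for some $k$, so $g = f g_k f'$ with $f,f' \in F \subseteq K$, giving $g_k = f^{-1} g (f')^{-1} \in K \subseteq L_k$. This contradicts the choice of $L_k$ unless $g_k \in F$, in which case $g \in F g_k F = F$. The main delicate point is the finiteness of $D$ modulo $(F,F)$-double cosets, which rests squarely on the simultaneous use of the finiteness of vertex stabilizers in $G$ and the $F$-cocompactness of the subtree $\cX$.
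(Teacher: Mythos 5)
Your proposal is correct. The double-coset finiteness argument works: since vertex stabilizers are finite and $\cX$ has finitely many $F$-orbits of vertices, the set $D=\{g\in G \mid g\cX\cap\cX\neq\emptyset\}$ is indeed a finite union of $(F,F)$-double cosets, and since a double coset $Fg_kF$ with $g_k\notin F$ misses $F$, separating $F$ from the finitely many representatives $g_1,\dots,g_m$ by finite index subgroups $L_k$ and intersecting gives the desired $K$. Your route differs from the paper's. The paper does not use LERF of $G$ itself, nor any double-coset bookkeeping: it passes to a free normal subgroup $G_0\lhd G$ of finite index, sets $F_0=F\cap G_0$, picks $F_0$-orbit representatives $V$ of the vertices of $\cX$, collects into a finite set $S\subseteq G_0$ elements of $G_0$ translating between distinct representatives, and applies M.~Hall's theorem inside the free group $G_0$ to find a finite index normal subgroup $N$ (normal in $G$ after taking the core) with $F_0N\cap S=\emptyset$; it then takes $K=FN$ and uses the freeness of the $G_0$-action to force any $g\in K$ with $g\cX\cap\cX\neq\emptyset$ into $F$. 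The trade-off is this: your argument is shorter and more conceptual, treating the lemma as a standard separability-of-double-cosets statement, but it needs the extra classical input that LERF passes to finite index overgroups (Scott), so that $G$ is LERF and $F$ is closed in the profinite topology on $G$; the paper's proof only invokes M.~Hall's theorem for the free group $G_0$, is thus more self-contained relative to its own toolkit, and yields the more explicit subgroup $K=FN$ with $N$ normal in $G$. Both proofs ultimately rest on the same two ingredients you flag: finiteness of vertex stabilizers and $F$-cocompactness of $\cX$. One cosmetic remark: in your final verification, after discarding the cosets contained in $F$, every remaining $g_k$ lies outside $F$, so the clause ``unless $g_k\in F$'' is vacuous; the clean statement is that $g\in K\cap D$ forces either $g\in F$ or $g_k\in L_k$ for some $k$, and the latter is impossible.
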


\begin{proof} Let $G_0 \lhd G$ be a free normal subgroup of finite index in $G$, and let $F_0=F \cap G_0$. Then $G_0$ acts freely on $\cT$
as the vertex stabilizers for the action of $G$ on $\cT$ are finite. Since $|F:F_0|<\infty$,
the induced action of $F_0$ on $\cX$ is still cocompact, so we can choose a finite set $V$ of representatives of orbits of vertices of $\cX$ under this action.

Now, for every pair of distinct vertices $u,v \in V$ such that $v \in G_0 u$, choose an arbitrary element $g_{uv} \in G_0$ such that $v=g_{uv} u$.
Let $S \subseteq G_0$ denote the finite set of these elements and their inverses. Note that $F_0 \cap S =\emptyset$ and $F_0$ is a finitely generated subgroup of the free group $G_0$.
By M. Hall's Theorem \cite[Theorem 5.1]{M-Hall}, there is a finite index normal subgroup $N \lhd G_0$ such that $F_0N \cap S= \emptyset$. Since $|G:N|<\infty$,
after replacing $N$ with its normal core (the intersection of all its conjugates in $G$), we can assume that $N \lhd G$ (and, still, $N \subseteq G_0$, $|G:N|<\infty$).

Observe that $K=F N$ is a finite index subgroup of $G$ containing $F$, and denote $K_0=F_0N \leqslant G_0$.
To check that $K$ satisfies the desired property, let us first suppose that $h u=v$ for some $u,v \in V$,
$u \neq v$, and some $h \in K_0$. Then $v=g u $ for some $g \in S$, hence $hu=gu$, so $h=g \in G_0$ as $G_0$ acts freely on $\cT$. Thus we arrive to a contradiction:
$h\in K_0 \cap S=\emptyset$. Therefore distinct vertices from $V$ belong to different $K_0$-orbits.

Consider any  $g\in K$ such that $g\cX \cap \cX \neq \emptyset$. Then $g=fh$ for some $f \in F$ and $h \in N$,
and there exist vertices $x,y$ of $\cX$ such that $g x =y$;
thus $h x=f^{-1}y$.
Note that $z=f^{-1} y$ is still a vertex of $\cX$, as $\cX$ is $F$-invariant, so there exist vertices $u,v \in V$ and elements $f_1,f_2 \in F_0$ such that
$x =f_1u$ and $z=f_2 v$. The equality $hf_1u=f_2 v$ shows that $u$ and $v$ are in the same $K_0$-orbit, as $hf_1,f_2 \in K_0$. It follows that $u=v$, so $hf_1u=f_2 u$,
which yields that $h f_1=f_2$, as $hf_1,f_2 \in G_0$
and $G_0$ acts freely on $\cT$. Hence $h=f_2f_1^{-1} \in F_0$, and so $g=fh \in F$, as required.
\end{proof}

Recall that a subgroup $F$ of a group $G$ is said to be \emph{malnormal} if for every $g \in G \setminus F$ one has $gFg^{-1} \cap F=\{1\}$.
We will say that $F$ is \emph{v-malnormal} in $G$ if there exists a finite index subgroup $K \leqslant G$, containing $F$, such that $F$ is malnormal in $K$.

Suppose that $G$ acts on a tree $\cT$ and $\cX$ is a subtree of $\cT$. For every subtree $\mathcal Y$ of $\cT$ we will use $\St_F(\mathcal{Y})$ to denote the
\emph{pointwise stabilizer} of $\mathcal{Y}$ in $F$.
We will also use $\partial_{\cT} \cX$ to denote the set of edges of $\cT$ each of which starts at a vertex in $\cX$ and ends at a vertex
in $\cT \setminus \cX$.

The main result of this section is the following expanded version of Theorem \ref{thm:Hall-crit-simple} from the Introduction.

\begin{thm} \label{thm:Hall_for_vf} Let $G$ be a finitely generated virtually free group and let $F \leqslant G$ be a finitely generated subgroup.
Then the following are equivalent:

\begin{itemize}
  \item[(i)] there is a finite index subgroup $K \leqslant G$ such that $F \subseteq K$ and $F$ is a free factor of $K$;
  \item[(ii)] $F$ is v-malnormal in $G$;
  \item[(iii)] $|\C_G(f):\C_F(f)|<\infty$ for each finite order element $f \in F\setminus\{1\}$;
  \item[(iv)] for every cocompact action of $G$ on a tree $\cT$ with finite vertex stabilizers, there exists an $F$-invariant $F$-cocompact subtree $\cX$ of $\cT$
such that $\St_F(e)=\{1\}$ for all edges $e \in \partial_{\cT} \cX$;
\item[(v)] there exist a cocompact action of $G$ on a tree $\cT$ with finite vertex stabilizers and an $F$-invariant $F$-cocompact subtree $\cX$ of $\cT$
such that $\St_F(e)=\{1\}$ for all edges $e \in \partial_{\cT} \cX$.
\end{itemize}
\end{thm}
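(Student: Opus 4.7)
The plan is to prove the equivalences via the cycle (i) $\Rightarrow$ (ii) $\Rightarrow$ (iii) $\Rightarrow$ (iv) $\Rightarrow$ (v) $\Rightarrow$ (i). Four of these arrows are comparatively short, and I would dispatch them first. The implication (iv) $\Rightarrow$ (v) is immediate, since Lemma~\ref{lem:virt_free_gp_act_on_trees} already furnishes a cocompact $G$-action on a tree with finite vertex stabilizers. For (i) $\Rightarrow$ (ii) I would use the Bass-Serre tree of $K = F * L$, in which $F$ stabilizes a vertex $v$ and all edges are trivially stabilized; any $g \in K$ with $gFg^{-1} \cap F \ni f \neq 1$ then has $f$ fixing both $v$ and $gv$, hence also the geodesic between them, forcing $v = gv$ and $g \in F$, so $F$ is malnormal in $K$. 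For (ii) $\Rightarrow$ (iii), malnormality of $F$ in a finite index $K \leqslant G$ shows that for every non-trivial finite-order $f \in F$ any $g \in \C_K(f)$ must satisfy $gfg^{-1} = f \in F \setminus \{1\}$, so $g \in F$, giving $\C_K(f) = \C_F(f)$ and hence $|\C_G(f):\C_F(f)| \leqslant |G:K| < \infty$.

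For (v) $\Rightarrow$ (i) the strategy is a Bass-Serre collapsing argument. Applying Lemma~\ref{lem:sep_subtree} produces a finite index subgroup $K \leqslant G$ containing $F$ with $g\cX \cap \cX = \emptyset$ for every $g \in K \setminus F$, so that the $K$-translates of $\cX$ form a pairwise disjoint family of subtrees of $\cT$; collapsing each of them to a single vertex yields a tree $\cT'$ on which $K$ still acts cocompactly. The collapsed vertices carry stabilizers conjugate to $F$, the uncollapsed vertices retain their original finite stabilizers, and each edge $e$ in the $K$-orbit of $\partial_\cT \cX$ has trivial $K$-stabilizer: if $g \in \St_K(e)$ with $e \in \partial_\cT(k\cX)$, then $g$ fixes a vertex of $k\cX$, so $k^{-1}gk \in F$ by Lemma~\ref{lem:sep_subtree}, whence $k^{-1}gk \in \St_F(k^{-1}e) = \{1\}$ by hypothesis. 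Bass-Serre theory then realises $K$ as the fundamental group of a finite graph of groups whose distinguished vertex has group $F$ and whose incident edges all have trivial edge group, which yields $K \cong F * L$.

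The main obstacle will be (iii) $\Rightarrow$ (iv). Fix any cocompact $G$-action on $\cT$ with finite vertex stabilizers, and let $\cX_0$ be the minimal $F$-invariant subtree (or an $F$-fixed vertex) from Lemma~\ref{lem:min_inv_subtree}; then $F$ acts cocompactly on $\cX_0$. A key preliminary that I would establish is that for every non-trivial finite-order $f \in G$, the centralizer $\C_G(f)$ acts cocompactly on $\Fix_\cT(f)$: indeed, $\Fix(f)$ meets only finitely many $G$-orbits of vertices of $\cT$, and within each orbit $Gv$ the points of $\Fix(f) \cap Gv$ are parameterized by the $G$-conjugates of $f$ lying in the finite group $\St_G(v)$, each such conjugate furnishing a single $\C_G(f)$-orbit. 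Hypothesis (iii) then promotes this to $\C_F(f)$ acting cocompactly on $\Fix(f)$. Since the virtually free group $F$ has only finitely many conjugacy classes of non-trivial finite cyclic subgroups, represented say by $\langle f_1 \rangle, \ldots, \langle f_k \rangle$, the union $\mathcal{Y} := \cX_0 \cup \bigcup_{i=1}^k F \cdot \Fix(f_i)$ is $F$-cocompact.

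To build the required $\cX$ I would iterate a ``fixed-set closure'' inside $\mathcal{Y}$: set $\cX^{(0)} = \cX_0$ and, for $n \geqslant 0$, $\cX^{(n+1)} = \cX^{(n)} \cup \bigcup \{\Fix(f) : f \in F \setminus \{1\} \text{ of finite order, } \Fix(f) \cap \cX^{(n)} \neq \emptyset\}$, and then let $\cX = \bigcup_{n \geqslant 0} \cX^{(n)}$. An induction shows that each $\cX^{(n)}$ is an $F$-invariant subtree (every adjoined fixed set is connected and meets the previous subtree), so $\cX$ is as well, and the inclusion $\cX \subseteq \mathcal{Y}$ supplies $F$-cocompactness. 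To establish (iv), if an edge $e \in \partial_\cT \cX$ were fixed by a non-trivial $f \in F$, then $f$ would have finite order (as $\cT$ has finite edge stabilizers), and $\Fix(f)$ would meet $\cX$ at the endpoint of $e$ lying in $\cX$; the closure property would then place all of $\Fix(f)$, and in particular both endpoints of $e$, inside $\cX$, a contradiction. The hard part is precisely this combination: marrying the iterative tree extension with the cocompactness of centralizer fixed-sets that (iii) supplies.
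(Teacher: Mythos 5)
Your cycle of implications is the same as the paper's, and your arguments for (i)$\Rightarrow$(ii), (ii)$\Rightarrow$(iii), (iv)$\Rightarrow$(v) and (v)$\Rightarrow$(i) coincide with the paper's proof (the last being exactly the collapsing argument via Lemma~\ref{lem:sep_subtree} and the Structure Theorem, with the same verification that edges at the collapsed vertices have trivial stabilizers). The genuinely different step is (iii)$\Rightarrow$(iv). The paper fixes a free normal subgroup $G_0\lhd G$ of finite index, lets $N$ be the number of $G_0$-orbits of edges, and takes $\cX$ to be the $N$-neighbourhood of the minimal $F$-invariant subtree $\cX_0$: if a non-trivial $f\in F$ fixed an edge of $\partial_\cT\cX$, the fixed geodesic back to $\cX_0$ has length greater than $N$, so by pigeonhole some $g\in G_0$ translates one of its edges to another; freeness of the $G_0$-action forces $[g,f]=1$, while minimality of $\cX_0$ gives $\langle g\rangle\cap F=\{1\}$, contradicting (iii). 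You instead prove that for a torsion element $f$ the centralizer $\C_G(f)$ acts cocompactly on $\Fix_{\cT}(f)$ (your parameterization of the $\C_G(f)$-orbits inside a $G$-orbit of vertices is correct; the same count, or the local finiteness of $\cT$ noted in the paper, handles edge orbits), use (iii) to transfer this cocompactness to $\C_F(f)$, and then saturate $\cX_0$ by repeatedly adjoining fixed trees of torsion elements of $F$ that meet it, getting the boundary condition from the closure property and cocompactness from the finiteness of the number of conjugacy classes of torsion elements of $F$. This is a valid alternative: the paper's neighbourhood-plus-pigeonhole trick is shorter and needs no analysis of fixed trees, whereas your construction isolates exactly where (iii) enters (cocompactness of $\C_F(f)$ on $\Fix(f)$) and produces a canonical torsion-saturated subtree. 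One small repair is needed: when forming $\mathcal Y$ you must take representatives of the conjugacy classes of non-trivial \emph{finite-order elements} of $F$ (equivalently, include all non-trivial powers of your $f_i$), because $\Fix(f_i^m)$ may properly contain $\Fix(f_i)$ and your closure process adjoins fixed trees of arbitrary torsion elements, so with cyclic-subgroup representatives alone the inclusion $\cX\subseteq\mathcal Y$ can fail; finiteness of this set of classes is standard for finitely generated virtually free groups, since every finite subgroup fixes a vertex of a cocompact tree with finite vertex stabilizers.
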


\begin{proof} Claim (i) implies claim (ii), as in a free product a free factor is always malnormal (this is an easy consequence of the Normal Form Theorem). To show that (ii)
implies (iii), suppose that $|\C_G(f):\C_F(f)|=\infty$ for some non-trivial element $f \in F$, and let $K \leqslant G$ be any subgroup of finite index containing $F$.
Then $|\C_G(f):\C_K(f)|<\infty$, so $|\C_K(f):\C_F(f)|=\infty$. In particular, there must exist an element $g \in \C_K(f) \setminus F$, which yields that
$f \in gFg^{-1}\cap F \neq\{1\}$. Thus, $F$ is not malnormal in $K$. Hence we have proved that (ii) implies (iii) by the contrapositive.

Let us now show that (iii) implies (iv). Suppose that $G$ acts cocompactly on a tree $\cT$ with finite vertex stabilizers.
Note that then the tree $\cT$ must be locally finite: indeed, if a vertex $v$ was incident to infinitely many edges in $\cT$, then infinitely many
of these edges (starting at $v$) would belong to the same $G$-orbit, as $G \backslash \cT$ is finite. The latter would yield that $|\St_G(v)|=\infty$,
contradicting the assumptions.

By Lemma \ref{lem:min_inv_subtree}, $\cT$ has an $F$-invariant subtree $\cX_0$ on which $F$ acts cocompactly.
Let $G_0\lhd G$ be a normal free subgroup of $G$, with $|G:G_0|<\infty$. Then $G_0$ acts freely and cocompactly on $\cT$. Let $N \in \N$ be the number of $G_0$-orbits
of the (oriented) edges in $\cT$, and let $\cX$ be the $N$-neighborhood of $\cX_0$ in $\cT$. In other words, $\cX$ is the subtree of $\cT$ spanned on the
vertices of $\cT$ which are at distance at most $N$ from $\cX_0$. Obviously $\cX$ is still $F$-invariant. Moreover, $F$ acts on $\cX$ cocompactly because
$F\backslash \cX_0$ is finite and $\cT$ is locally finite.

Let $e$ be any edge in $\partial_{\cT} \cX$ and let $f \in \St_F(e)$. Then $f \in F$ must have finite order, so it must fix a vertex of $\cX_0$
(cf. \cite[Ch. I, Corollary 4.9]{D-D}). Let $v$ be the vertex of $\cX_0$ which is fixed by $f$ and is closest to $e_-$.
Then $f$ fixes pointwise the oriented geodesic edge path $e_1 e_2 \dots e_n$, where $(e_1)_-=v$, $(e_i)_+=(e_{i+1})_-$,
$i=1,\dots,n-1$, and $e_n=e$. Note that $n > N$ since $e_+ \notin \cX$ and $\cX$ is the $N$-neighborhood of $\cX_0$. Therefore there must exist indices $i,j$,
$1 \le i<j \le n$, and an element $g \in G_0$ such that $e_j=g\,e_i$. Since $(e_i)_+ \notin \cX_0$ by the choice of $v$,
$e_i$ is not an edge of $\cX_0$. Moreover, the element $g$ translates the edge $e_i$, by construction. It follows that $g$ is hyperbolic and $e_i$ is on the axis of $g$; in particular, for every $k \in \N$, $g^k$ is hyperbolic and translates $e_i$.
The latter implies that $g^k \notin F$ for all $k \in \N$, as otherwise, $\cX_0$ would contain all edges translated by $g^k$ (see Lemma \ref{lem:min_inv_subtree}),
but $e_i$ is not an edge of $\cX_0$. Hence $\langle g \rangle \cap F=\{1\}$.

Observe that the commutator $g^{-1}f^{-1}gf$ belongs to $G_0$ (as $G_0 \lhd G$) and fixes $e_i$ (as $f\,e_i=e_i$, $f\,e_j=e_j$). Since $G_0$ acts on $\cT$ freely, this commutator must be trivial,
hence $g \in \C_G(f)$. It follows that $\C_G(f)$ contains the infinite cyclic subgroup $\langle g \rangle$, which has trivial intersection with $F$. Therefore
$|\C_G(f):\C_F(f)|=\infty$, so $f=1$ by (ii). Thus $\St_F(e)=\{1\}$ and (iv) holds.

The implication (iv) $\Rightarrow$ (v) is clear in view of Lemma \ref{lem:virt_free_gp_act_on_trees}. Thus it remains to prove that (v) implies (i).

Suppose that (v) holds. By Lemma \ref{lem:sep_subtree} we can find a finite index subgroup $K \leqslant G$ such that $F \subseteq K$ and for all $g \in K \setminus F$,
$g\cX$ is disjoint from $\cX$ in $\cT$. It follows that for all $g,h \in K$ either $g\cX=h\cX$ (if $h^{-1}g \in F$) or $g\cX \cap h\cX=\emptyset$ (if $h^{-1}g \notin F$).
Therefore the action of $K$ on $\cT$ induces an action of $K$ on a new tree $\cT'$,
which is obtained from $\cT$ by contracting $\cX$ and all its translates $g\cX$, $g \in K$, to single vertices.

Let $x'$ be the vertex of $\cT'$ obtained from $\cX$. The choice of $K$ implies that $\St_K(x')=F$. On the other hand, the preimage of any edge $e'$ of $\cT'$, starting at
$x'$, is a single edge $e \in \partial_{\cT}(\cX)$ starting at some vertex $v \in \cX$. Therefore, if $g \in \St_K(e')$, then $g \in \St_K(e)$, so
$gv=v \in g\cX \cap \cX$. The latter implies that $g \in F$, so $g \in \St_F(e)=\{1\}$ by (v). Hence
\begin{equation}\label{eq:triv_ed_stab}
\St_K(e')=\{1\} \text{ for all edges $e'$ of $\cT'$, with $e'_-=x'$}.
\end{equation}
Now, by the Structure Theorem for groups acting on trees (see \cite[Chapter I.5.4, Theorem~13]{Serre} or \cite[Chapter I.4, Theorem 4.1]{D-D}),
the action of $K$ on $\cT'$ gives rise to an isomorphism between $K$ and the
fundamental group of the quotient graph of groups. Moreover, in view of \eqref{eq:triv_ed_stab}, the definition of this fundamental group using generators and relators
(see \cite[Chapter I.5.1]{Serre} or \cite[Chapter I.4.1]{D-D}) immediately implies that
$\St_K(x')=F$ is a free factor of $K$. Thus (i) holds, and the proof is complete.
\end{proof}

\begin{rem}\label{rem:suff_edge_elts} If $G$ is given as the fundamental group of a finite graph of finite groups and $f \in F \setminus\{1\}$ is an element of finite order
then $|\C_G(f)|<\infty$ unless $f$ is conjugate to an element from one of the edge groups in $G$. Thus  condition (iii) of Theorem \ref{thm:Hall_for_vf} only needs to be checked for elements conjugate into the edge groups.
\end{rem}

Indeed, since $f$ has finite order, it must fix a vertex $v$ of the Bass-Serre tree $\cT$ for the given splitting of $G$.
If $f$ is not conjugate to any edge group in $G$, then $f$ does not stabilize any edge of $\cT$, so the fixed point set of $f$ is $\{v\}$. It follows that $\C_G(f)$ also fixes $v$, hence $|\C_G(f)|<\infty$.

The following two consequences of Theorem~\ref{thm:Hall_for_vf} have already been known.

\begin{cor}[{cf. \cite[Theorem 1.1]{Burns2}}] \label{cor:Hall_free_prods} Let $G$ be the free product of finitely many finite groups and a finitely generated free group.
Then every finitely generated subgroup $F \leqslant G$ is a free factor of a finite index subgroup. In particular, $G$ satisfies property \VR{}.
\end{cor}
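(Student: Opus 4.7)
The plan is to invoke Theorem~\ref{thm:Hall_for_vf} via condition (v), using the Bass-Serre tree coming from the natural splitting of $G$ as a free product. Write $G=F_n *G_1*\cdots*G_k$ where $F_n$ is free of finite rank and each $G_i$ is finite. Then $G$ is finitely generated and virtually free (for instance, it is the fundamental group of a finite graph of finite groups, so Lemma~\ref{lem:virt_free_gp_act_on_trees} applies). More importantly, $G$ admits a natural splitting as the fundamental group of a finite graph of groups $\mathcal{G}$: take a central vertex with trivial vertex group, $k$ leaves carrying the vertex groups $G_1,\dots,G_k$, attached to the central vertex by edges with trivial edge groups, and $n$ loops based at the central vertex (also with trivial edge groups). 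Let $\cT$ be the Bass-Serre tree for this splitting. Then $G$ acts cocompactly on $\cT$, every vertex stabilizer is finite, and crucially, every edge stabilizer is trivial.

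Now let $F\leqslant G$ be any finitely generated subgroup. By Lemma~\ref{lem:min_inv_subtree}, either $F$ has no hyperbolic element, in which case $F$ fixes some vertex $v$ of $\cT$ and we set $\cX=\{v\}$, or $F$ contains a hyperbolic element and the set of edges translated by elements of $F$ forms an $F$-invariant subtree $\cX\subseteq \cT$ on which $F$ acts cocompactly. Either way, $\cX$ is an $F$-invariant, $F$-cocompact subtree of $\cT$. Since every edge of $\cT$ has trivial $G$-stabilizer, we automatically have $\St_F(e)=\{1\}$ for every $e\in\partial_{\cT}\cX$, so condition (v) of Theorem~\ref{thm:Hall_for_vf} is satisfied. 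Hence condition (i) holds: $F$ is a free factor of some finite index subgroup $K\leqslant G$.

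For the second assertion, being a free factor of $K$ makes $F$ a retract of $K$, and since $|G:K|<\infty$ we conclude $F\vr G$ directly from Definition~\ref{def:vr}. Thus every finitely generated subgroup of $G$ is a virtual retract, which is precisely property \VR{}.

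No step looks genuinely delicate: the entire argument is a clean application of the equivalence in Theorem~\ref{thm:Hall_for_vf}, exploiting the fact that for this class of groups one has a canonical splitting with \emph{trivial} edge groups, which automatically kills condition (v). The only point to be mindful of is invoking Lemma~\ref{lem:min_inv_subtree} correctly in the case when $F$ is a torsion subgroup (say, conjugate into one of the $G_i$), so that we record the fact that $F$ fixes a vertex and take $\cX$ to be that single vertex.
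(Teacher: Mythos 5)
Your proposal is correct and follows essentially the same route as the paper: realize $G$ as the fundamental group of a finite graph of finite groups with trivial edge groups, apply Lemma~\ref{lem:min_inv_subtree} to the action of $F$ on the Bass-Serre tree to obtain an $F$-invariant $F$-cocompact subtree, and invoke the implication (v) $\Rightarrow$ (i) of Theorem~\ref{thm:Hall_for_vf}. The only difference is that you spell out the graph of groups and the two cases of Lemma~\ref{lem:min_inv_subtree} explicitly, which the paper leaves implicit.
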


\begin{proof} The given group $G$ is certainly finitely generated and virtually free, and it can be represented as the fundamental group of a finite graph of groups with
finite vertex groups and trivial edge groups. The action on the corresponding  Bass-Serre tree will have finite vertex stabilizers and trivial edge stabilizers.
Thus the corollary now follows  from a combination of Lemma \ref{lem:min_inv_subtree} with the implication (v) $\Rightarrow$ (i) of Theorem~\ref{thm:Hall_for_vf}.
\end{proof}

The implication (iii) $\Rightarrow$ (i)  of Theorem~\ref{thm:Hall_for_vf} yields the following.

\begin{cor}[{cf. \cite[Corollary 3.1]{Bogop-2}}] \label{cor:Hall_for_tors-free} If $G$ is a finitely generated virtually free group and $F \leqslant G$ is a finitely generated torsion-free subgroup, then
$F$ is a free factor of a finite index subgroup. In particular, $F \vr G$.
\end{cor}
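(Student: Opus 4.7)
The plan is to apply Theorem~\ref{thm:Hall_for_vf} directly, using the implication (iii) $\Rightarrow$ (i). Since $F$ is torsion-free, the set of non-trivial finite order elements of $F$ is empty, so condition (iii) holds vacuously: there are no elements $f \in F \setminus \{1\}$ of finite order for which one would need to verify that $|\C_G(f) : \C_F(f)| < \infty$. Hence, by the theorem, there exists a finite index subgroup $K \leqslant G$ such that $F \subseteq K$ and $F$ is a free factor of $K$.

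For the final assertion $F \vr G$, observe that whenever $F$ is a free factor of $K$, say $K \cong F * L$ for some $L \leqslant K$, the natural map $K \to F$ killing $L$ is a retraction of $K$ onto $F$. Combined with $|G:K| < \infty$, this gives $F \vr G$ directly from Definition~\ref{def:vr}.

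There is essentially no obstacle here, as the hardest work has been absorbed into Theorem~\ref{thm:Hall_for_vf}; the only subtle point is recognising that the torsion-freeness of $F$ is exactly what is needed to make condition (iii) automatic.
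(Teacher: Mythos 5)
Your argument is exactly the paper's: the corollary is stated there as an immediate consequence of the implication (iii) $\Rightarrow$ (i) of Theorem~\ref{thm:Hall_for_vf}, with condition (iii) holding vacuously since a torsion-free $F$ has no non-trivial finite order elements, and the final claim following because a free factor is a retract. Your proof is correct and matches the intended route.
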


Finally, let us prove Corollary \ref{cor:Brun-Burns} from the Introduction.
\begin{proof}[Proof of Corollary \ref{cor:Brun-Burns}] Let $G$ be a finitely generated virtually free group.
If $G$ satisfies M. Hall's property then every non-trivial finitely generated subgroup $F \leqs G$ is
v-malnormal by Theorem~\ref{thm:Hall_for_vf}, hence $|\No_G(F):F|<\infty$.

Conversely, suppose that $|\No_G(H):H|<\infty$ for every non-trivial finite subgroup $H \leqs G$, and let $F \leqs G$ be
any finitely generated subgroup. Then $|\No_G(\langle f \rangle):\langle f \rangle|<\infty$, so the normalizer $\No_G(f)$ must be finite, for all finite order elements $f \in F\setminus\{1\}$.
It follows that
$|\C_G(f)|<\infty$, hence $|\C_G(f):\C_F(f)|<\infty$. Therefore $F$ is a free factor of a finite index subgroup of $G$ by  Theorem~\ref{thm:Hall_for_vf}.
\end{proof}

\section{Open problems}\label{sec:open_q}
In this section we list some open problems motivated by our discussion of virtual retraction properties.

\begin{question}\label{q:v_free->VR} Do virtually free groups satisfy \VR{}?
\end{question}

If the answer to the previous question is positive the next natural step would be to ask the following.

\begin{question}\label{q:v_c_spec->VR} Suppose that $G$ is a virtually compact special hyperbolic group (in the sense of Haglund and Wise \cite{H-W}).
Is every quasiconvex subgroup a virtual retract of $G$?
\end{question}

It follows from the results of \cite{H-W} that every quasiconvex subgroup of a virtually compact special hyperbolic group $G$ contains a finite index subgroup which is a virtual retract of $G$. Thus both Questions \ref{q:v_free->VR} and \ref{q:v_c_spec->VR} are particular instances of
Question \ref{q:quest-1} from the Introduction.

Following the discussion in Remark \ref{rem:claim_in_L-R} we also need to ask the following.

\begin{question}\label{q:L-R} Suppose that $G$ is a finitely generated linear group (over $\mathbb{C}$) and $K \leqslant G$
is a finite index subgroup satisfying \VR{}. Does $G$ also satisfy \VR{}?
\end{question}

Recall that a group $G$ is said to be \emph{large} if some finite index subgroup $K \leqs G$ admits an epimorphism onto a non-abelian free group.
\begin{question}\label{q:f_p_small_with_VRC} Does there exist a finitely presented torsion-free group with \VRC{} which is neither virtually abelian nor large?
\end{question}

In view of Proposition \ref{prop:VRC->inf_vbn}, a positive answer to Question \ref{q:f_p_small_with_VRC} would give the first example
of a finitely presented group which has infinite first virtual betti number but is not large (see \cite[Introduction]{Brid-Koch}).
Theorem \ref{thm:wreath->VRAb} provides examples of non-large groups with \VRC{}, but such wreath products are almost never finitely presented
(cf. \cite[Theorem 1]{Baumslag}).

Corollary \ref{cor:amalg_over_virt_cyclic} naturally suggests the following.

\begin{question}\label{q:VRC_for_amalg} Is property \VRC{} stable under taking amalgamated free products over (virtually) cyclic subgroups?
\end{question}

Using quasi-potency and Lemma \ref{lem:retr_onto_virt_ab->homom} this question can be reduced to asking whether the amalgamated product of two finitely generated virtually
abelian groups over a virtually cyclic subgroup satisfies \VRC{}. The latter is related to the following interesting problem.

\begin{question}\label{q:graph_of_ab->VRC} Let $G$ be the fundamental group of a finite graph of groups with free abelian vertex groups (of finite ranks) and cyclic edge groups. If $G$ is cyclic subgroup separable, does it necessarily satisfy \VRC{}?
\end{question}

\section{Appendix: one example}\label{sec:appendix}
In \cite{Bogop-1} Bogopol'ski\v{\i} used the covering theory for finite $3$-complexes to obtain a characterization for the fundamental group $G$, of a finite graph of finite groups, to satisfy M. Hall's property. And in \cite{Bogop-2} he developed an algorithm for checking whether a finitely generated subgroup is a free factor of a subgroup of finite index in $G$.
Unfortunately, \S 11 of \cite{Bogop-1} claimed to give a counter-example to the conjecture of Brunner and Burns \cite{Brun-Burns} discussed in the Introduction.
This would contradict our Corollary~\ref{cor:Brun-Burns}, hence the goal this appendix is to explain that the example from \cite[\S 11]{Bogop-1} is not actually valid. After seeing an earlier draft of this paper, Bogopol'ski\v{\i} confirmed that there is a mistake in this example, but the main results from \cite{Bogop-1,Bogop-2} are correct.

More precisely, \cite[\S 11]{Bogop-1} considers the group $G$ which is the fundamental graph of the graph of groups $\mathcal{G}$, with $3$ vertices and $3$ edges, sketched
 on Figure~\ref{fig:Bogop-ex}.
\begin{figure}[!ht]
  \begin{center}
   \includegraphics{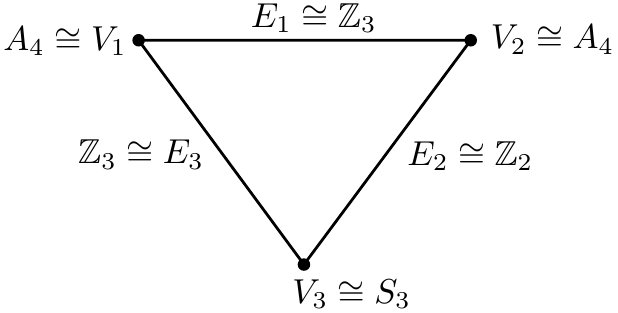}
  \end{center}
\caption{The graph of groups $\mathcal{G}$.}\label{fig:Bogop-ex}
\end{figure}

The vertex groups $V_1$ and $V_2$ are both isomorphic to the alternating group $A_4$ and the vertex group $V_3$ is isomorphic to the symmetric group $S_3$. The edge groups $E_1$ and $E_3$ are cyclic of order $3$,
the edge group $E_2$ is cyclic of order $2$, and the embeddings of $E_1$ and $E_3$ in $V_1$ have the same image. Let us take $\langle a_1,a_2 \,\|\, a_1^3,a_2^3,(a_1a_2)^2\rangle$
as a presentation for $V_1$, $\langle b_1,b_2 \,\|\, b_1^3,b_2^3,(b_1b_2)^2\rangle$
as a presentation for $V_2$  and $\langle c_1,c_2 \,\|\, c_1^3,c_2^2,(c_1c_2)^2\rangle$
as a presentation for $V_3$. In \cite[\S 11]{Bogop-1} it is stated that the particular embeddings of the edge groups of $\mathcal G$
into the vertex groups are arbitrary, therefore, after choosing the maximal tree in $\mathcal{G}$ to consist of the edges corresponding to $E_3$ and $E_2$,
we can assume that $G=\pi_1(\mathcal{G})$ has the following presentation:

\[G\cong\langle a_1,a_2,b_1,b_2,c_1,c_2,t \,\|\, a_1^3,a_2^3,(a_1a_2)^2,b_1^3,b_2^3,(b_1b_2)^2,c_1^3,c_2^2,(c_1c_2)^2, ta_1t^{-1}b_1^{-1}, a_1c_1^{-1},b_1b_2c_2^{-1} \rangle.
\]

Clearly we can eliminate $c_1$ and $c_2$ from this presentation to simplify it:
\begin{equation}\label{eq:simp_pres}
G\cong\langle a_1,a_2,b_1,b_2,t \,\|\, a_1^3,a_2^3,(a_1a_2)^2,b_1^3,b_2^3,(b_1b_2)^2,(a_1b_1b_2)^2, ta_1t^{-1}b_1^{-1} \rangle.
\end{equation}
 It is not difficult to see (in view of Remark \ref{rem:suff_edge_elts})
that for any non-trivial element $g$ in $V_1=\langle a_1,a_2\rangle$ or in $V_2=\langle b_1,b_2 \rangle$, the centralizer $\C_G(g)$ is finite.
However Proposition 11.1 of \cite{Bogop-1} claims that neither of the subgroups $V_1$, $V_2$ is a free factor of a finite index subgroup of $G$, contradicting our Theorem \ref{thm:Hall_for_vf}.
Below we demonstrate, using \cite{GAP4}, that the conclusion of Theorem \ref{thm:Hall_for_vf} is indeed correct in this case
(the GAP code is available from the author upon request).

Observe that $G$ admits a homomorphism $\psi$ onto the alternating group $A_5$, defined as follows (note that here we multiply permutations from left to right, following the convention used in
GAP):
\begin{equation*} a_1 \stackrel{\psi}{\mapsto} (1\,2\,3), ~a_2 \stackrel{\psi}{\mapsto} (2\,3\,4), ~b_1 \stackrel{\psi}{\mapsto} (2\,4\,3),~
  b_2 \stackrel{\psi}{\mapsto} (3\,5\,4), ~t \stackrel{\psi}{\mapsto} (1\,4\,2).
\end{equation*}

It is easy to check that $\psi$ is injective on the subgroups $V_1$, $V_2$ and $V_3=\langle c_1,c_2\rangle=\langle a_1,b_1b_2\rangle$. Thus $\psi(V_1) \cong A_4 \cong \psi(V_2)$, so
the full preimage $G_0=\psi^{-1}(\psi(V_2))=\psi^{-1}(\langle (2\,4\,3),(3\,5\,4)\rangle )$ is a subgroup of index $5$ in $G$.
Evidently, $V_2 \subseteq G_0$. Moreover, since $\psi(V_2)=\St_{A_5}(1)$ and $\psi(V_1)=\St_{A_5}(5)$ are conjugate in
$A_5=\psi(G)$, a conjugate of $V_1$ in $G$ will also be contained in $G_0$. We used \cite{GAP4} to find the following presentation of $G_0$:
%(see Subsection \ref{subsec:Gap_code} below for the code):
\begin{equation}\label{eq:pres_of_G_0} G_0 \cong \langle f_1,f_2,f_3,f_4,f_5,f_6,f_7 \,\|\,  f_1^3, f_2^3, f_3^3, (f_1f_2)^2, (f_6f_1f_6^{-1}f_3^{-1})^2 \rangle.
\end{equation}
After rewriting the last relator as $(f_1f_6^{-1}f_3^{-1}f_6)^2$ and replacing the generator $f_3$ with $\tilde f_3=f_6^{-1}f_3^{-1}f_6$, we obtain
\begin{equation}\label{eq:short_pres_of_G_0} G_0 \cong \langle f_1,f_2,\tilde f_3,f_4,f_5,f_6,f_7 \,\|\,  f_1^3, f_2^3, \tilde f_3^3, (f_1f_2)^2, (f_1\tilde f_3)^2 \rangle.
\end{equation}
The latter shows that
\[G_0 \cong \bigl(\langle f_1,f_2\,\|\, f_1^3, f_2^3,  (f_1f_2)^2 \rangle \ast_{f_1=\tilde f_1} \langle \tilde f_1,\tilde f_3\,\|\, \tilde f_1^3, \tilde f_3^3,  (\tilde f_1\tilde f_3)^2\rangle \bigr)
*\langle f_4,f_5,f_6,f_7\,\|\, \rangle. \] Therefore
\begin{equation}\label{eq:decomp_of_G_0}
G_0\cong (A_4*_{\Z_3} A_4)*F_4,
\end{equation}
where $F_4$ denotes the free group of rank $4$. Since $V_2$ and a conjugate of $V_1$ are contained in $G_0$, each of these subgroups will be conjugate (in $G$) to one of the $A_4$-factors
in the splitting \eqref{eq:decomp_of_G_0} of $G_0$. In view of the symmetry of this splitting,
to show that $V_1$ and $V_2$ are free factors of finite index subgroups of $G$, it is sufficient to prove that this is the case for
the subgroup $W=\langle f_1,f_2 \rangle \cong A_4$ in $G_0$, where $G_0$ is equipped with the presentation \eqref{eq:short_pres_of_G_0}.

Consider the epimorphism $\varphi:G_0 \to A_5$ defined by
\begin{equation*} f_1 \stackrel{\varphi}{\mapsto} (2\,3\,4), ~f_2 \stackrel{\varphi}{\mapsto} (1\,2\,3),~
  \tilde f_3 \stackrel{\varphi}{\mapsto} (3\,4\,5), \mbox{ and } f_i \stackrel{\varphi}{\mapsto} id~ \mbox{ for } i=4,5,6,7.
\end{equation*}

Once again the full preimage $G_1=\varphi^{-1}(\varphi(W))=\varphi^{-1}(\langle(1\,2\,3),(2\,3\,4)\rangle)$ has index $5$ in $G_0$, and contains $W$. \cite{GAP4} computes the following presentation
of $G_1$ with $24$ generators and $6$ relators:
\begin{equation}\label{eq:pres_of_G_1} G_1 \cong \langle h_1,\dots,h_{24} \,\|\,  h_1^3, h_2^3,(h_2h_1)^2,  h_7^3, h_{20}^3, (h_{20}h_7^{-1})^2 \rangle.
\end{equation}
After replacing $h_7$ with its inverse we see that
\[G_1 \cong A_4*A_4*F_{20}.\]
Obviously, the subgroup $W$ must be conjugate to one of the $A_4$-factors in $G_1$, hence $W$ is itself a free factor of $G_1$, and $|G:G_1|=|G:G_0|\,|G_0:G_1|=25$.
Thus we have checked that for each $i=1,2$, $V_i$ is a free factor of a subgroup of finite index in $G$. The latter confirms the conclusion of our Theorem~\ref{thm:Hall_for_vf},
and shows that the example from  \cite[\S 11]{Bogop-1} is not a counter-example to the conjecture of Brunner and Burns.

\end{document}